\documentclass[11pt]{article}
\usepackage{fullpage}
\usepackage{natbib}
\setcitestyle{authoryear,round,citesep={;},aysep={,},yysep={;}}
\usepackage[T1]{fontenc}
\usepackage{amsmath,amssymb,amsthm,mathtools}
\usepackage{booktabs,graphicx,microtype}
\usepackage{longtable}
\usepackage{hyperref}
\usepackage{url}
\usepackage{xcolor}

\newcommand{\reals}{\mathbf R}
\newcommand{\integers}{\mathbf Z}
\newcommand{\Fclass}[1]{\mathcal F_{1,#1}}
\newcommand{\rhofix}{\rho_{\mathrm{fix}}}
\newcommand{\rhohor}{\rho_{\mathrm{hor}}}
\newcommand{\qopt}{q_\star}
\newcommand{\ProbMeasures}{\mathcal P}

\newtheorem{theorem}{Theorem}[section]
\newtheorem{lemma}[theorem]{Lemma}
\newtheorem{proposition}[theorem]{Proposition}
\newtheorem{corollary}[theorem]{Corollary}
\theoremstyle{definition}

\theoremstyle{remark}
\newtheorem{remark}[theorem]{Remark}

\title{The exact worst-case asymptotic rate of Nesterov acceleration
for strongly convex optimization}
\author{\begin{tabular}[t]{@{}c@{\qquad}c@{}}
    Sebastian Pokutta\textsuperscript{1,2,}\thanks{Equal contribution.} &
    Bartolomeo Stellato\textsuperscript{3,}\footnotemark[1]\\
    \normalfont\small \href{mailto:pokutta@zib.de}{\texttt{pokutta@zib.de}} &
    \normalfont\small \href{mailto:bstellato@princeton.edu}{\texttt{bstellato@princeton.edu}}
  \end{tabular}\\\noalign{\vskip 3pt}
  \small \textsuperscript{1}Department for AI in Society, Science, and Technology, Zuse Institute Berlin, Germany\\
  \small \textsuperscript{2}Institute of Mathematics, Technische Universit\"at Berlin, Germany\\
  \small \textsuperscript{3}Department of Operations Research and Financial Engineering, Princeton University, USA
}
\hypersetup{pdfauthor={Sebastian Pokutta and Bartolomeo Stellato}}

\date{October 10, 2026}
\hypersetup{colorlinks=true,citecolor=cyan,linkcolor=cyan,urlcolor=cyan}

\begin{document}
\maketitle

\begin{abstract}
We study Nesterov's accelerated gradient method with its standard constant parameters for smooth strongly convex optimization.
We characterize the exact worst-case asymptotic factor for the squared distance to the minimizer.
To this end, we construct candidate worst-case trajectories by combining planar spirals with a common contraction factor that satisfy a self-similarity condition.
Each component rotates by a fixed angle at every iteration.
The exact factor is the optimal value of a linear optimization problem over probability measures on a fixed disk, with constraints derived from smooth strongly convex interpolation.
Every optimal measure has a common contraction factor, which is also the largest factor for which the angular interpolation conditions are feasible.
We also show that this worst-case asymptotic rate can be realized by a \emph{single} function in a finite-dimensional space whose iterates attain this factor. A matching upper bound holds uniformly over all functions in the class and all finite dimensions.
Consequently, allowing the function, dimension, and initial point to vary with the iteration horizon does not increase the worst-case asymptotic factor.
We also show that the exact factor is $1-4/(3\sqrt\kappa)+o(\kappa^{-1/2})$ as $\kappa\to\infty$, where $\kappa=L/\mu$ is the ratio of the gradient Lipschitz constant to the strong convexity parameter.
\end{abstract}

\section{Introduction}

We study Nesterov's accelerated gradient method (NAG) with constant parameters for smooth strongly convex optimization.
Our goal is to determine its exact worst-case asymptotic convergence rate, measured by the squared distance to the minimizer.

Let $\kappa=L/\mu$ denote the condition number.
The classical convergence guarantee gives an asymptotic squared-distance factor of at most $1-1/\sqrt\kappa$ (\citealp[Theorem~2.2.1, specialized to scheme~(2.2.22)]{nesterov2018lectures}; \citealp[Proposition~12]{lessard2016analysis}).
The exact worst-case factor among quadratic objectives is $(1-1/\sqrt\kappa)^2$~\citep[Proposition~1 and Appendix~A]{lessard2016analysis}.
These bounds leave open how slowly the method can converge on a nonquadratic objective.

A function that nearly attains the worst-case error after $t$ iterations need not remain worst-case as the method continues.
We therefore distinguish between keeping the function and initial point fixed and allowing the function, dimension, and initial point to depend on the iteration count.
We prove that both choices give the same worst-case asymptotic factor, attained by one fixed function and initial point.

\subsection{Contribution}
Our main contributions are:

\paragraph{An optimization characterization of the exact rate.}
We characterize the exact worst-case asymptotic factor by a linear optimization problem over probability measures on a disk whose radius depends only on the condition number.
Every optimal measure is supported on one circle, so its components have a common contraction factor.
The value is also the largest factor for which the angular interpolation conditions are feasible; these conditions ensure consistency of the prescribed points, gradients, and function values with one function in the class.

\paragraph{Matching lower and upper bounds.}
We construct one function and initial point attaining the optimal asymptotic factor.
A matching upper bound holds uniformly over all functions in the class, all finite dimensions, and all nonoptimal initial points.
Thus, allowing the function, dimension, and initial point to vary with the iteration horizon does not increase the worst-case asymptotic factor.

\paragraph{Exact and asymptotic descriptions of the rate.}
We exhibit a condition number for which the exact asymptotic factor cannot be expressed by radicals.
For large condition numbers, we obtain the explicit expansion $1-4/(3\sqrt{\kappa})+o(\kappa^{-1/2})$ as $\kappa\to\infty$, proving that the leading constant $4/3$ in Gannot's upper bound is sharp.

\paragraph{Explicit worst-case examples and interactive visualizations.}
At $\kappa=10$, we construct a worst-case trajectory consisting of one planar spiral.
At $\kappa=100$, our construction combines two planar spirals with different rotation angles.
We provide interactive visualizations of these exact examples at \url{https://nagsc.iol.berlin}.
For other user-selected values of $\kappa$, the website computes candidate worst-case trajectories numerically on demand.

\subsection{Related work}

The classical convergence guarantee for NAG can be improved without changing the algorithm.
\citet[Section~4.5]{lessard2016analysis} use integral quadratic constraints (IQCs) to compute tighter upper bounds through semidefinite programming, and \citet{safavi2018explicit} derive explicit bounds by solving the associated inequalities analytically.
\citet[Proposition~2.13]{gannot2022frequency} obtains an analytic upper bound using frequency-domain inequalities.
For squared distance, his bound already has the leading term $1-4/(3\sqrt{\kappa})$ as $\kappa\to\infty$; we prove that the constant $4/3$ is sharp.
We extend this IQC and frequency-domain approach by constructing a finite separator for every contraction factor above the interpolation optimum.
The resulting upper bound matches the rate attained by one fixed objective.

Lyapunov functions provide a related approach to convergence analysis.
\citet{sanzserna2021connections} construct such functions for Nesterov methods and connect them to Lyapunov functions for differential equations.
More generally, searches over prescribed families of quadratic Lyapunov inequalities can be formulated as semidefinite programs~\citep{taylor2018lyapunov,upadhyaya2024automated}, an approach implemented in AutoLyap~\citep{upadhyaya2025autolyap}.
The exactness guarantees for these searches concern the chosen family of Lyapunov inequalities.

Establishing sharpness requires a matching lower bound.
Performance estimation formulates worst-case analysis after a fixed number of iterations as an optimization problem~\citep{drori2014performance}.
The interpolation conditions of \citet{taylor2017smooth} make this formulation exact for smooth strongly convex functions and provide a way to construct matching examples.
For the smooth convex version of Nesterov's method, \citet[Theorem~5.2]{kim2018generalizing} give an $O(t^{-3})$ upper bound on the smallest squared gradient norm evaluated through iteration $t$, under an initial distance bound.
\citet[Theorem~7.2]{du2026relaxedpep} constructs matching examples for $t\geq7$ in dimension $t-4$.
We impose these interpolation conditions for every pair of iteration indices along an infinite trajectory.
One strongly convex function realizes this trajectory and attains the worst-case asymptotic squared-distance factor from a fixed initial point.

Interpolation can also construct trajectories that demonstrate failure to converge.
\citet{goujaud2023counterexamples} use it to find periodic trajectories and identify parameter choices for which first-order methods, including Nesterov acceleration, fail to converge.
\citet{regis2026variational} studies related behavior by characterizing fixed points, two-point cycles, and their stability for heavy-ball and Nesterov momentum.
For the Nesterov parameters considered here, convergence is already guaranteed.
The trajectories we construct contract while rotating, and their decay determines the worst-case asymptotic factor.

The behavior of these examples depends on the function class.
For quadratic objectives, \citet{hagedorn2024iteration} derive iteration-complexity bounds for Nesterov's and Polyak's methods by analyzing their linear recurrences.
For twice continuously differentiable objectives, once convergence is established, linearization near the minimizer connects asymptotic convergence to a quadratic model~\citep[Lemma~3]{vanscoy2025c2}.
\citet{fengyuan2026local} establish local linear convergence of Nesterov's method from sufficiently close initial points under a local Polyak--\L{}ojasiewicz condition and twice continuous differentiability.
They obtain an optimal local rate by tuning the parameters.
Our function class assumes a Lipschitz continuous gradient but does not require twice continuous differentiability.
The matching nonquadratic examples show that restricting this class to quadratic objectives can miss the worst-case asymptotic factor.

The choice of algorithm parameters matters as well.
For the smooth convex version of Nesterov's method, which uses a varying momentum coefficient, \citet{jangryu2025point} prove that the iterates converge to a minimizer even when the minimizer is not unique.
\citet{hefang2026unified} study parameter sequences that combine accelerated behavior in the early iterations with a strongly convex asymptotic guarantee.
Improved guarantees can also come from modifying the update itself.
\citet{park2023factor} analyze the optimized gradient method using Lyapunov functions and linear coupling, and introduce a strongly convex variant.
This variant changes the momentum coefficient and adds a gradient correction, yielding a better function-value guarantee than the classical Nesterov bound.
For strongly convex objectives, \citet[Section~5]{lan2026gradient} develop regularization and restart methods with improved iteration-complexity guarantees for finding a small gradient.
We keep the prescribed Nesterov step size and momentum coefficient fixed and determine the exact worst-case asymptotic rate of that iteration.
Corollary~\ref{cor:metrics} gives the corresponding exact factor for the gradient norm at the current iterate.

\paragraph{Paper organization.}
Section~\ref{sec:problem} defines the function class, algorithm, and two asymptotic rates.
Section~\ref{sec:variational} gives the angular interpolation conditions, defines the rate through a disk LP, and proves the common-circle reduction.
Section~\ref{sec:rate-theorem} states the main theorem and outlines the matching lower and upper bounds.
Section~\ref{sec:example} presents a worst-case example at $\kappa=10$.
The appendices provide the full proofs, additional examples, numerical results, and an algebraic obstruction to formulas by radicals.

\section{Problem setting and rate definitions}
\label{sec:problem}

We work in $\reals^d$ with the Euclidean inner product and norm.
For $0<\mu\leq L$, let $\mathcal F_{\mu,L}$ denote the class of differentiable, $\mu$-strongly convex functions with $L$-Lipschitz gradients.
Dividing the objective by $\mu$ gives a function in $\Fclass{\kappa}$, where $\kappa=L/\mu$ is the condition number.
This scaling preserves the NAG iterates when the step size changes from $1/L$ to $1/\kappa$.
Every function in this class has a unique minimizer, denoted by $x_\star$.

For $\kappa\geq1$, we consider the NAG iteration
\begin{equation}
\label{eq:nag}
 y_t=x_t+\beta_\kappa(x_t-x_{t-1}),
 \qquad
 x_{t+1}=y_t-\frac{1}{\kappa}\nabla f(y_t),
 \qquad
 \beta_\kappa=\frac{\sqrt\kappa-1}{\sqrt\kappa+1},
\end{equation}
initialized with $x_{-1}=x_0\neq x_\star$.
Here $y_t$ is the point where the gradient is evaluated, and $x_t$ is the iterate whose distance to the minimizer we measure.
The step size $1/\kappa$ and momentum coefficient $\beta_\kappa$ remain constant throughout the iteration.

For a fixed function and initial point, define the normalized squared error
\[
 R_t(f,x_0)=\frac{\|x_t-x_\star\|^2}{\|x_0-x_\star\|^2}.
\]
The quantity $\limsup_{t\to\infty}R_t(f,x_0)^{1/t}$ measures its asymptotic geometric decay.
For example, if $R_t=q^t$, this quantity equals $q$.
Throughout, convergence factors refer to squared distance.
We compare the
\emph{fixed-objective asymptotic factor} and the
\emph{horizon-dependent asymptotic factor}:
\begin{align}
\rhofix(\kappa)
&=\sup_{\substack{d\geq1,\ f\in\Fclass{\kappa}(\reals^d)\\x_0\neq x_\star}}
  \limsup_{t\to\infty}R_t(f,x_0)^{1/t},
\label{eq:rho-fix}\\
\rhohor(\kappa)
&=\limsup_{t\to\infty}
  \left(\sup_{\substack{d\geq1,\ f\in\Fclass{\kappa}(\reals^d)\\x_0\neq x_\star}}
  R_t(f,x_0)
  \right)^{1/t}.
\label{eq:rho-hor}
\end{align}
In $\rhofix(\kappa)$, we first compute the asymptotic factor for each fixed function and initial point, then take the supremum over all such choices and dimensions.
In $\rhohor(\kappa)$, we first maximize the error at each iteration $t$, allowing the function, dimension, and initial point to depend on $t$, then take the asymptotic factor.
Hence $\rhofix(\kappa)\leq\rhohor(\kappa)$.
Our main theorem proves equality and, for each $\kappa$, attainment by a \emph{single fixed function} and initial point, both independent of $t$.

When $\kappa=1$, strong convexity and smoothness imply $\nabla f(x)=x-x_\star$.
The method reaches the minimizer in one iteration and remains there, so
$\rhofix(1)=\rhohor(1)=0$.
We therefore focus on $\kappa>1$.

The quadratic $f(x)=\tfrac12x^2$ belongs to $\Fclass{\kappa}$.
On this objective, the NAG iteration becomes
\[
 x_{t+1}=\left(1-\frac1\kappa\right)
 \bigl((1+\beta_\kappa)x_t-\beta_\kappa x_{t-1}\bigr).
\]
Its characteristic polynomial is $\bigl(\lambda-(1-1/\sqrt\kappa)\bigr)^2$, giving the asymptotic squared-distance factor
\begin{equation}
\label{eq:q0}
 q_{\mathrm{quad}}(s)=\left(1-\frac{1}{s}\right)^2,
 \qquad s=\sqrt\kappa.
\end{equation}
Together with Nesterov's classical bound, this gives
\[
 q_{\mathrm{quad}}(s)\leq\rhofix(\kappa)\leq\rhohor(\kappa)\leq1-\frac1s,
 \qquad s=\sqrt\kappa.
\]
We determine both asymptotic factors exactly over the full class $\Fclass{\kappa}$.

\section{Interpolation conditions for contracting trajectories}
\label{sec:variational}

We construct candidate worst-case trajectories whose components contract and rotate by fixed amounts at each iteration.
We first state the conditions under which prescribed query points and gradients come from one function in $\Fclass{\kappa}$.
We then express these conditions in terms of the contraction factor, rotation angles, and component weights.
We use these conditions to define the feasible factors, then give a disk LP whose value is their largest element.
Section~\ref{sec:rate-theorem} identifies this value with the exact worst-case rate.

\subsection{Exact interpolation and cyclic monotonicity}
\label{sec:interpolation}

Fix $\kappa>1$.
A finite collection $(y_i,g_i,f_i)$ is \emph{interpolable} by $\Fclass{\kappa}$ if there exists one $f\in\Fclass{\kappa}$ satisfying $\nabla f(y_i)=g_i$ and $f(y_i)=f_i$ for every index $i$.
The exact interpolation theorem~\citep[Theorem~4]{taylor2017smooth} states that this holds if and only if every ordered pair satisfies
\begin{equation}
\label{eq:ssc-interpolation}
 f_i\geq f_j+\langle g_j,y_i-y_j\rangle
 +\frac1{2\kappa}\|g_i-g_j\|^2
 +\frac{\kappa}{2(\kappa-1)}
  \left\|y_i-y_j-\frac1\kappa(g_i-g_j)\right\|^2.
\end{equation}
An interpolation inequality is \emph{active} when it holds with equality.
To construct a trajectory, we prescribe the query points and gradients and leave the function values free.
The following change of variables eliminates the function values from the interpolation conditions.
Write $s=\sqrt\kappa$ and translate the minimizer to the origin.
Set
\begin{equation}
\label{eq:interpolation-variables}
 h(y)=f(y)-\frac12\|y\|^2,
 \qquad
 \sigma=\nabla h(y),
 \qquad
 r=y-\frac{\sigma}{\kappa-1}.
\end{equation}
Subtracting the quadratic term makes $h$ convex and $(\kappa-1)$-smooth.
Its convex conjugate $h^*$ is $1/(\kappa-1)$-strongly convex, so
\begin{equation}
\label{eq:phi}
 \varphi(\sigma)=h^*(\sigma)-\frac{\|\sigma\|^2}{2(\kappa-1)}
\end{equation}
is a proper closed convex function, possibly taking the value $+\infty$.
The identity $\sigma=\nabla h(y)$ gives $y\in\partial h^*(\sigma)$ and hence $r\in\partial \varphi(\sigma)$.
Here $\partial$ denotes the convex subdifferential.
Conversely, a set of pairs $(\sigma_i,r_i)$ lies in the subdifferential of one proper closed convex function if and only if it is \emph{cyclically monotone}: for every finite cycle
$i_0,i_1,\ldots,i_m=i_0$,
\[
 \sum_{j=0}^{m-1}
 \langle r_{i_j},\sigma_{i_j}-\sigma_{i_{j+1}}\rangle\geq0.
\]
Rockafellar's cyclic monotonicity theorem guarantees the existence of such a function $\varphi$~\citep{rockafellar1970convex}.
Reversing the change of variables gives an interpolating function in $\Fclass{\kappa}$:
\[
 f(y)=\left(\varphi+\frac{\|\cdot\|^2}{2(\kappa-1)}\right)^*(y)
      +\frac12\|y\|^2.
\]
Thus cyclic monotonicity characterizes whether the prescribed points and gradients admit a function in $\Fclass{\kappa}$.
The resulting function determines the previously unspecified values $f_i$.

\subsection{Contracting and rotating trajectories}
\label{sec:rotating-trajectories}

Consider a sequence in $\reals^2$ obtained by applying the same contraction and rotation at every iteration.
Identify $\reals^2$ with the complex plane and set
$z=\sqrt q\,e^{i\theta}$, where $0<q<1$.
Multiplication by $z$ contracts lengths by $\sqrt q$ and rotates directions by
$\theta$, so squared norms contract by $q$.
Thus larger values of $q$ correspond to slower convergence.

Applying \eqref{eq:interpolation-variables} at $y_t$ gives $\sigma_t=\nabla f(y_t)-y_t$ and $r_t=y_t-\sigma_t/(\kappa-1)$.
The gradient update gives
\[
 x_{t+1}=y_t-\frac1\kappa(y_t+\sigma_t)
 =\frac{\kappa-1}{\kappa}
  \left(y_t-\frac{\sigma_t}{\kappa-1}\right)
 =\frac{\kappa-1}{\kappa}r_t.
\]
Thus $r_t$ is a constant multiple of $x_{t+1}$.
We consider candidate trajectories for which these iterates contract and rotate by fixed amounts:
\[
 r_{t+1}=zr_t,
 \qquad
 r_t=a z^t,
\]
where $a\neq0$ specifies the initial magnitude and direction.
We first take $a=1$, then combine components with different angles and amplitudes.

These sequences describe the trajectory after initialization.
Lemma~\ref{lem:initialization-splice} constructs an objective attaining the optimal factor with $x_{-1}=x_0$.

With $r_t=z^t$, we seek $\sigma_t=d z^t$.
Using $y_t=r_t+\sigma_t/(\kappa-1)$ and
$x_t=(\kappa-1)r_{t-1}/\kappa$ in the momentum update gives
\[
 \left(1+\frac{d}{\kappa-1}\right)z^2
 =\frac{\kappa-1}{\kappa}
  \bigl((1+\beta_\kappa)z-\beta_\kappa\bigr).
\]
Solving for $d$ with $\kappa=s^2$ gives
\begin{equation}
\label{eq:nag-transfer}
 r_t=z^t,
 \qquad \sigma_t=d_s(z)z^t,
 \qquad
 d_s(z)=-\frac{(s^2-1)(sz-(s-1))^2}{s^2z^2}.
\end{equation}
For $q>0$, this expression is defined at every angle.
At $q=q_{\mathrm{quad}}(s)$ and $\theta=0$, we have $d_s(z)=0$, so the transformed gradient vanishes.
This includes a geometric trajectory of the quadratic objective $f(x)=\|x\|^2/2$.
The interpolation conditions below determine which candidate trajectories come from one function in $\Fclass{\kappa}$.

\subsection{Interpolation conditions}

We apply the cyclic monotonicity conditions to the contracting and rotating trajectories from Section~\ref{sec:rotating-trajectories}.
Fix $s>1$ and $q\in(0,1)$.
Write $z=\sqrt q\,e^{i\theta}$, where $0\leq\theta\leq\pi$.
For one such trajectory, define the terms appearing in these inequalities by
\begin{equation}
\label{eq:edge-cost}
 c_{ij}(\theta)
 =\langle r_i,\sigma_i-\sigma_j\rangle
 =\operatorname{Re}\left[
 \overline{d_s(z)}z^i\left(\bar z^i-\bar z^j\right)
 \right],
 \qquad i,j\geq0.
\end{equation}
We suppress the dependence on the fixed parameters $s$ and $q$ in $c_{ij}(\theta)$.
Here $\operatorname{Re}$ denotes the real part, and an overbar on a complex number denotes complex conjugation.
The inner products are unchanged when $\theta$ is replaced by $-\theta$, so it suffices to consider $\theta\in[0,\pi]$.

We combine $m$ contracting and rotating trajectories with the same factor $q$.
Scaling all components by the same positive constant preserves the interpolation inequalities and convergence factor, so we normalize the weights to sum to one.
Choose angles $\theta_1,\ldots,\theta_m$ and positive weights $w_1,\ldots,w_m$ with $\sum_{k=1}^m w_k=1$.
Set $z_k=\sqrt q\,e^{i\theta_k}$ and
\[
 r_t=(\sqrt{w_k}\,z_k^t)_{k=1}^m,
 \qquad
 \sigma_t=(\sqrt{w_k}\,d_s(z_k)z_k^t)_{k=1}^m.
\]
The inner products add across coordinates:
\[
 \langle r_i,\sigma_i-\sigma_j\rangle
 =\sum_{k=1}^m w_k\,c_{ij}(\theta_k).
\]
Each complex coordinate represents two real coordinates.
An angle of $0$ or $\pi$ requires only one.

This finite collection of modes can be encoded by the finitely supported probability measure $\nu=\sum_{k=1}^m w_k\delta_{\theta_k}$, where $\delta_\theta$ is the unit point mass at $\theta$.
Its weighted inner products are then
\[
 \bar c_{ij}(\nu)=\int_0^\pi c_{ij}(\theta)\,d\nu(\theta).
\]
For the variational formulation, we now enlarge the domain from finitely supported measures to all probability measures on $[0,\pi]$, denoted by $\ProbMeasures([0,\pi])$.
For a general $\nu$ in this set, the same integral defines $\bar c_{ij}(\nu)$; the interpolation inequalities below still determine feasibility.
This enlargement permits compactness arguments without assuming a bound on the number of modes; an infinite-support measure does not itself specify a finite-dimensional modal construction.
At the largest feasible factor, if it exceeds $q_{\mathrm{quad}}(s)$, Lemma~\ref{lem:finite-optimal-support} guarantees that a feasible angular measure can nevertheless be chosen with finite support.

We seek function values $\varphi_i$ satisfying
\[
 \varphi_i-\varphi_j\leq\bar c_{ij}(\nu),
 \qquad i,j\geq0.
\]
Shifting both indices gives $\bar c_{i+t,j+t}(\nu)=q^t\bar c_{ij}(\nu)$.
Lemma~\ref{lem:stars} shows that, whenever these inequalities are feasible, we can choose $\varphi_i=q^i\varphi_0$ for some scalar $\varphi_0$.
The inequalities for $(0,t)$ and $(t,0)$ give the following equivalent conditions.

\begin{lemma}[Equivalent interpolation conditions]
\label{lem:stars}
For a probability measure $\nu$ on $[0,\pi]$, the cyclic monotonicity inequalities hold if and only if there is a real scalar $\varphi_0$ satisfying
\begin{equation}
\label{eq:star-feasible}
 -\bar c_{t0}(\nu)\leq(1-q^t)\varphi_0\leq\bar c_{0t}(\nu),
 \qquad\text{for every integer }t\geq1.
\end{equation}
In that case, we may choose $\varphi_i=q^i\varphi_0$.
\end{lemma}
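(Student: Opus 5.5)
The plan is to prove both directions by reducing everything to the potential inequalities $\varphi_i-\varphi_j\le\bar c_{ij}(\nu)$ and to the shift identity $\bar c_{i+t,j+t}(\nu)=q^t\bar c_{ij}(\nu)$. The identity follows from \eqref{eq:edge-cost}, since $z^{i+t}(\bar z^{i+t}-\bar z^{j+t})=|z|^{2t}z^i(\bar z^i-\bar z^j)$ and $|z|^2=q$, and it survives integration against $\nu$. I will also note that $|\bar c_{ij}(\nu)|\le 2\sup_\theta|d_s(z)|<\infty$, because $|z|\le1$ and $d_s$ is continuous on the circle of radius $\sqrt q>0$.

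\emph{Sufficiency.} Suppose $\varphi_0$ satisfies \eqref{eq:star-feasible}, and set $\varphi_i=q^i\varphi_0$. Take a pair with $j=i+t$ and $t\ge1$. The required inequality $q^i\varphi_0-q^{i+t}\varphi_0\le\bar c_{i,i+t}=q^i\bar c_{0t}$ is the right half of \eqref{eq:star-feasible} multiplied by $q^i>0$. Symmetrically, for $i=j+t$, the inequality $\varphi_{j+t}-\varphi_j\le q^j\bar c_{t0}$ reduces to $-(1-q^t)\varphi_0\le\bar c_{t0}$, which is the left half. The case $i=j$ is trivial. Summing the pairwise inequalities around any finite cycle, the left side telescopes to zero. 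This gives $\sum_j\bar c_{i_j i_{j+1}}\ge0$, which is cyclic monotonicity. The same computation shows that this choice $\varphi_i=q^i\varphi_0$ works, which proves the final sentence of the lemma.

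\emph{Necessity.} The condition \eqref{eq:star-feasible} involves only the scalar $\varphi_0$. Since $1-q^t>0$, it is feasible if and only if
\[
 \sup_{t\ge1}\frac{-\bar c_{t0}}{1-q^t}\;\le\;\inf_{u\ge1}\frac{\bar c_{0u}}{1-q^u},
\]
with both sides finite. Finiteness follows once the inequality holds, because $\bar c$ is bounded and $1-q^t\ge1-q$. So I only need $-\bar c_{t0}/(1-q^t)\le\bar c_{0u}/(1-q^u)$ for every pair $t,u\ge1$. To get it, I apply cyclic monotonicity to one explicit cycle: go up from $0$ in $t$ steps of size $u$ to $tu$, then come back down in $u$ steps of size $t$ to $0$.

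By the shift identity, the upward edges cost $\sum_{k=0}^{t-1}q^{ku}\bar c_{0u}=\bar c_{0u}\,(1-q^{tu})/(1-q^u)$. The downward edges $(k+1)t\to kt$ cost $\sum_{k=0}^{u-1}q^{kt}\bar c_{t0}=\bar c_{t0}\,(1-q^{tu})/(1-q^t)$. Cyclic monotonicity says the total is nonnegative. Dividing by $1-q^{tu}>0$ gives exactly the needed inequality, and any $\varphi_0$ between the supremum and the infimum works.

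The main obstacle I anticipate is this necessity step. The naive route is to take arbitrary potentials from Rockafellar's construction and then average or shift them into geometric form, but that requires control of $\varphi_t$ as $t\to\infty$, which is not available. The two-slope cycle above avoids this entirely, so I would carry out that cycle computation carefully first, including the bookkeeping of the geometric sums.
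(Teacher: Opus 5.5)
Your proof is correct and matches the paper's argument: for necessity you use the same explicit cycle, climbing from $0$ in equal steps of one length and returning in equal steps of the other, whose cost factors via the shift identity as $(1-q^{tu})$ times the gap between the two bounds; for sufficiency you set $\varphi_i=q^i\varphi_0$, check each ordered pair against one half of \eqref{eq:star-feasible}, and telescope around cycles. Your separate boundedness remark on $\bar c_{ij}(\nu)$ is harmless but not needed, since the paper gets finiteness of $\varphi_0=\sup_u a_u$ directly from its being bounded above by any single upper bound $b_t$.
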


The proof is given in Appendix~\ref{app:star-reduction}.
The averaged terms $\bar c_{ij}(\nu)$ depend on $s$ and $q$ through $d_s(z)$ and $z=\sqrt q\,e^{i\theta}$.
For fixed $q$, the conditions in \eqref{eq:star-feasible} are linear in $\nu$ and $\varphi_0$ and must hold for every integer $t\geq1$.
For $s>1$, define the set of feasible factors by
\begin{equation}
\label{eq:feasible-factors}
 \mathcal Q(s)=\left\{q\in(0,1)\,\middle|\,\exists\,\nu\in\ProbMeasures([0,\pi]),\ \varphi_0\in\reals\text{ satisfying }\eqref{eq:star-feasible}\right\}.
\end{equation}
The choice $q=q_{\mathrm{quad}}(s)$, $\nu=\delta_0$, and $\varphi_0=0$ is feasible.
Lemma~\ref{lem:closed-rate-problem} shows that $\mathcal Q(s)$ has a largest element and every feasible factor is at most $1-1/s$.

\paragraph{A linear rate formulation and common-circle reduction.}
\label{sec:fixed-domain-rate}

In \eqref{eq:star-feasible}, all components share a contraction factor $q$, and the angular measure $\nu$ specifies their rotation angles.
We now allow both the contraction factors and angles to vary, represented by a probability measure $\mu$ on the disk $D_s=\{z\in\mathbb C:|z|^2\leq1-1/s\}$.
The objective is the average squared contraction factor, $\int |z|^2\,d\mu(z)$.

Let $\Gamma$ contain all finite cycles of iteration indices $\gamma=(n_0,\ldots,n_m=n_0)$, with $m\geq1$ and nonnegative integer indices.
Indices may repeat, and no index is required to be zero.
For each cycle, define the scaled sum of the interpolation terms in \eqref{eq:edge-cost} by
\begin{equation}
\label{eq:disk-cycle-cost}
 \mathcal C_\gamma^s(z)=\frac{q^2}{s^2-1}
 \sum_{j=0}^{m-1}c_{n_j,n_{j+1}}(\theta),
 \qquad z=\sqrt q\,e^{i\theta},\quad q>0.
\end{equation}
Here $c_{ij}(\theta)$ is evaluated at $q=|z|^2$.
Define $\mathcal C_\gamma^s$ on the lower half of the disk by $\mathcal C_\gamma^s(\bar z)=\mathcal C_\gamma^s(z)$, and set $\mathcal C_\gamma^s(0)=0$.
On each circle, the positive scaling preserves the cyclic monotonicity inequalities.
The coefficients are continuous on the whole disk; Appendix~\ref{app:fixed-domain-rate} gives their polynomial expressions.

For $s>1$, define $\qopt(s)$ by the linear optimization problem
\begin{equation}
\label{eq:disk-rate-lp}
 \qopt(s)=
 \begin{array}[t]{ll}
  \text{maximize} &
       \displaystyle\int |z|^2\,d\mu(z)\\[2pt]
  \text{subject to} &
       \displaystyle\int\mathcal C_\gamma^s(z)\,d\mu(z)\geq0,
       \quad\gamma\in\Gamma.
 \end{array}
\end{equation}
where the optimization variable $\mu$ is a probability measure on $D_s$.

\begin{proposition}[Common-circle reduction]
\label{prop:fixed-domain-rate}
For $s>1$, problem~\eqref{eq:disk-rate-lp} attains its maximum, and every optimal measure is supported on the circle $|z|^2=\qopt(s)$.
For each $q\in(0,1)$, membership in $\mathcal Q(s)$ is equivalent to the
existence of a feasible disk measure supported on $|z|^2=q$.
Consequently,
\begin{equation}
\label{eq:qcyc}
 \qopt(s)=\max\mathcal Q(s),
 \qquad q_{\mathrm{quad}}(s)\leq\qopt(s)\leq1-1/s.
\end{equation}
\end{proposition}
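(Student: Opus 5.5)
The plan has three parts: existence of a maximizer by compactness, the equivalence between circle-supported disk measures and the angular conditions of Lemma~\ref{lem:stars}, and a shift-and-reweight argument that pushes any feasible measure onto its outermost circle. The last part is what forces every optimal measure onto a single circle.

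\emph{Existence.} $D_s$ is compact, so $\ProbMeasures(D_s)$ is weak-$*$ compact. Each $\mathcal C_\gamma^s$ is continuous on $D_s$; this is the polynomial representation in Appendix~\ref{app:fixed-domain-rate}, and it holds at $z=0$ thanks to the factor $q^2/(s^2-1)$. Hence every constraint $\int\mathcal C_\gamma^s\,d\mu\ge 0$ defines a weak-$*$ closed set, and the feasible set is compact. The objective $\int|z|^2\,d\mu$ is continuous, so the maximum is attained. The feasible set is nonempty because $\delta_{z_0}$ with $z_0=1-1/s$ (real, so $\theta=0$) is feasible: there $d_s(z_0)=0$, so every $c_{ij}$ vanishes. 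In particular $\qopt(s)\ge q_{\mathrm{quad}}(s)$.

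\emph{Circle equivalence.} Fix $q\in(0,1)$ and a measure $\mu$ supported on $|z|^2=q$. Fold $\mu$ onto the upper half circle, using $\mathcal C_\gamma^s(\bar z)=\mathcal C_\gamma^s(z)$, to obtain an angular measure $\nu\in\ProbMeasures([0,\pi])$. Then $\int\mathcal C_\gamma^s\,d\mu=\frac{q^2}{s^2-1}\sum_j\bar c_{n_j n_{j+1}}(\nu)$. Since $q^2/(s^2-1)>0$, the disk constraints over all $\gamma\in\Gamma$ are exactly the cyclic monotonicity inequalities for $\nu$. By Lemma~\ref{lem:stars}, these hold if and only if \eqref{eq:star-feasible} has a solution $\varphi_0$, that is, if and only if $q\in\mathcal Q(s)$ witnessed by $\nu$.

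One caveat concerns the domain: a circle-supported disk measure must lie in $D_s$. For $q>1-1/s$ I would invoke Lemma~\ref{lem:closed-rate-problem}, which already shows that such $q$ are not in $\mathcal Q(s)$. So the equivalence holds for all $q\in(0,1)$, with both sides false when $q>1-1/s$.

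\emph{Concentration on one circle.} This is the key step. Shifting all indices of a cycle $\gamma$ by $t$ gives another cycle $\gamma+t\in\Gamma$, and by the shift identity $\mathcal C_{\gamma+t}^s(z)=|z|^{2t}\mathcal C_\gamma^s(z)$. Let $\mu$ be feasible with $\mu\ne\delta_0$, and define
\[
 \mu_t=\frac{|z|^{2t}\mu}{\int|z|^{2t}\,d\mu}.
\]
Then $\int\mathcal C_\gamma^s\,d\mu_t$ is a positive multiple of $\int\mathcal C_{\gamma+t}^s\,d\mu\ge 0$, so each $\mu_t$ is feasible.

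Let $\bar q=\max\{|z|^2: z\in\operatorname{supp}\mu\}$. As $t\to\infty$, the mass of $\mu_t$ concentrates near the circle $|z|^2=\bar q$: for any $\epsilon>0$, the ratio of the mass in $\{|z|^2\le\bar q-\epsilon\}$ to the mass in $\{|z|^2\ge\bar q-\epsilon/2\}$ tends to $0$, and the latter set has positive $\mu$-measure. By compactness a subsequence converges weak-$*$ to some $\mu_\infty$. This limit is feasible, because the constraints are closed, and it is supported on $|z|^2=\bar q$.

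Its objective value is $\bar q\ge\int|z|^2\,d\mu$, with equality only if $\mu$ is concentrated on the circle $|z|^2=\bar q$. Now take $\mu$ optimal. Optimality gives $\bar q\le\qopt(s)=\int|z|^2\,d\mu\le\bar q$, so equality holds throughout and $\mu$ is supported on $|z|^2=\qopt(s)$. The measure $\mu=\delta_0$ has objective $0<q_{\mathrm{quad}}(s)$, so it is not optimal and this case does not arise.

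\emph{Conclusion.} Combining the three parts gives $\qopt(s)\in\mathcal Q(s)$. Conversely, any $q\in\mathcal Q(s)$ has $q\le 1-1/s$ and therefore yields a feasible circle measure in $D_s$, so $q\le\qopt(s)$. Hence $\qopt(s)=\max\mathcal Q(s)$, and $q_{\mathrm{quad}}(s)\le\qopt(s)\le 1-1/s$.

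The main obstacle is the concentration limit. It needs two inputs: continuity of $\mathcal C_\gamma^s$ up to the boundary of $D_s$ and at $0$, so that weak-$*$ limits of feasible measures stay feasible; and a careful verification that the shifted cycles $\gamma+t$ really produce the factor $|z|^{2t}$ after the normalization $q^2/(s^2-1)$ is included.
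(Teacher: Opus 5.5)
Your proof is correct. The existence part and the circle-equivalence part (including the case $q>1-1/s$) match the paper's. The difference lies in how you force an optimal measure onto a single circle.

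The paper uses only the single shift $t=1$. It reweights a feasible $\mu$ to $d\eta=|z|^2\,d\mu/J(\mu)$ and notes the exact identity $J(\eta)-J(\mu)=\operatorname{Var}_\mu(|z|^2)/J(\mu)$. An optimal measure must therefore have $\operatorname{Var}_\mu(|z|^2)=0$. This is a one-line algebraic argument and needs no limit.

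You instead iterate the shift, reweighting to $|z|^{2t}\mu$. You then pass to a weak-$*$ limit supported on the outermost circle $|z|^2=\bar q$ of $\operatorname{supp}\mu$ and sandwich $\bar q\le\qopt(s)=J(\mu)\le\bar q$. This costs a compactness-and-concentration step: your ratio estimate for $\mu_t(\{|z|^2\le\bar q-\epsilon\})$, plus the portmanteau inequality on the open sets $\{|z|^2<\bar q-\epsilon\}$ and $\{|z|^2>\bar q\}$ to conclude that the limit lives on the circle.

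In exchange, your route gives a stronger by-product. For every feasible $\mu\neq\delta_0$, not only optimal ones, the limit $\mu_\infty$ together with your circle equivalence shows $\bar q\in\mathcal Q(s)$. Hence every feasible disk measure is supported in $\{|z|^2\le\qopt(s)\}$. The paper explicitly declines to say anything about non-optimal feasible measures, so this is a genuine, if modest, strengthening.
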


An \emph{optimal angular pair} is a pair $(\nu,\varphi_0)$ satisfying
\eqref{eq:star-feasible} at $q=\qopt(s)$.
Thus every optimal disk measure uses a common contraction factor, although
feasible disk measures may mix radii. For such mixed-radius measures, the
objective is an average, not an asserted asymptotic trajectory factor.
The objective and constraints are linear in $\mu$.
For a feasible measure with positive objective, multiplying its weights by $|z|^2$ and normalizing preserves feasibility.
This strictly increases the objective unless $|z|^2$ is constant on the support.
This argument uses all cycles, including those obtained by shifting every index by the same amount.
Appendix~\ref{app:fixed-domain-rate} gives the proof and relaxations that provide upper bounds.
The disk LP remains infinite-dimensional; for trajectory constructions and
numerical evaluation we use the angular conditions \eqref{eq:star-feasible}
and the identity \eqref{eq:qcyc}.
The next section identifies $\qopt(s)$ with the worst-case asymptotic factor over the full function class.

\section{Exact rate theorem}
\label{sec:rate-theorem}

\begin{theorem}[Exact asymptotic rate]
\label{thm:main}
For every $\kappa>1$, the two worst-case asymptotic factors defined in Section~\ref{sec:problem} satisfy
\begin{equation}
\label{eq:main}
 \rhofix(\kappa)=\rhohor(\kappa)=\qopt(\sqrt\kappa).
\end{equation}
This factor is attained by a single function in a finite-dimensional space and an initial point, both independent of the iteration count.
The exact rate satisfies
\begin{equation}
\label{eq:classical-bracket}
 \left(1-\frac{1}{\sqrt\kappa}\right)^2
 \leq \qopt(\sqrt\kappa)
 \leq 1-\frac{1}{\sqrt\kappa}.
\end{equation}
In particular, the rate tends to zero as $\kappa\downarrow1$ and to one as $\kappa\to\infty$.
\end{theorem}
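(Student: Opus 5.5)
The proof splits into a lower bound $\rhofix(\kappa)\geq\qopt(s)$ realized by one fixed function, and an upper bound $\rhohor(\kappa)\leq\qopt(s)$ that is uniform over functions, dimensions and initial points. Since $\rhofix\leq\rhohor$ holds trivially, together they give \eqref{eq:main}. The bracket \eqref{eq:classical-bracket} is then exactly \eqref{eq:qcyc} from Proposition~\ref{prop:fixed-domain-rate}. The limits as $\kappa\downarrow1$ and $\kappa\to\infty$ follow by squeezing, because both $(1-1/s)^2$ and $1-1/s$ tend to $0$ and to $1$ in these regimes.

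\emph{Lower bound.} By Proposition~\ref{prop:fixed-domain-rate} there is an optimal angular pair $(\nu,\varphi_0)$ at $q=\qopt(s)$.
\begin{itemize}
\item If $\qopt(s)=q_{\mathrm{quad}}(s)$, the quadratic $\tfrac12\|x\|^2$ already attains the factor, since its iteration has the double root $1-1/s$.
\item Otherwise, Lemma~\ref{lem:finite-optimal-support} lets us take $\nu$ finitely supported. The modal construction of Section~\ref{sec:rotating-trajectories} then gives sequences $(r_t,\sigma_t)$ in $\reals^{2m}$. Lemma~\ref{lem:stars} makes them cyclically monotone, and Rockafellar's theorem followed by the inverse change of variables yields $f\in\Fclass{\kappa}$ whose NAG trajectory satisfies $\|x_t\|^2=c\,q^t$.
\end{itemize}
This trajectory is only defined after initialization. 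Lemma~\ref{lem:initialization-splice} supplies a fixed objective and initial point with $x_{-1}=x_0$ whose tail coincides, up to a shift and scaling, with the spiral. Hence $\limsup_t R_t^{1/t}=\qopt(s)$, the factor is attained by a single function, and $\rhofix\geq\qopt$.

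\emph{Upper bound.} This is the main obstacle. For each $q>\qopt(s)$ we must show $\sup_{f,x_0}R_t\leq C_q\,q^t$ with $C_q$ independent of $t$, $d$ and $f$; letting $q\downarrow\qopt$ then gives $\rhohor\leq\qopt$. I would use duality for the disk LP \eqref{eq:disk-rate-lp}. Since $q\notin\mathcal Q(s)$, no measure on the circle $|z|^2=q$ is feasible. A Hahn--Banach/compactness separation argument, using continuity of $\mathcal C^s_\gamma$ on compact circles, should give finitely many cycles $\gamma_1,\dots,\gamma_N$ and weights $\lambda_k\geq0$ with $\sum_k\lambda_k\mathcal C^s_{\gamma_k}(z)<0$ for all $|z|^2=q$. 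This is the ``finite separator'' mentioned in the introduction.

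Next I would read this separator as a frequency-domain (IQC) certificate. Applied to an arbitrary trajectory, the corresponding weighted sum of shifted interpolation inequalities \eqref{eq:ssc-interpolation} is a finite-memory quadratic form in the signals $\rho^{-t}(r_t,\sigma_t)$, where $\rho=\sqrt q$. By Parseval, its negativity on the circle $|z|=\rho$, combined with the NAG transfer relation \eqref{eq:nag-transfer}, contradicts the inequality being nonnegative for the actual trajectory unless the $\rho$-weighted energy stays bounded. The KYP lemma converts this into a Lyapunov-type inequality, which gives $\|x_t\|^2\leq C_q q^t\|x_0\|^2$.

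The delicate points are these:
\begin{itemize}
\item The separation must be strict, with margin, uniformly over the circle and also near the real axis, where $d_s(z)$ may vanish. If the margin degenerates there, I would add a quadratic-IQC term to restore it.
\item Boundary and initialization terms from the finite memory of the cycles must be controlled uniformly. The exponential weighting by $q^{-t}$ turns them into constants depending only on $q$ and $s$.
\item The argument must be dimension-free. This holds because everything reduces to inner products, so the Gram-matrix structure is the same in every dimension.
\end{itemize}
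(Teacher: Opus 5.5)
Your lower bound, the bracket, the limits, and the separation step (Lemma~\ref{lem:strict-alternative} plus the compactness argument after it) all match the paper. The gap is in the last step of the upper bound: turning the strict separator into a uniform bound $\|x_t\|^2\leq C_q q^t\|x_0\|^2$.

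\textbf{The Parseval step.} Write $\bar u_t=q^{-t/2}u_t$. The argument only applies to signals already known to lie in $\ell_2$. It shows that if $\bar y,\bar\sigma\in\ell_2$, then $\|\bar\sigma\|_{\ell_2}\leq (c_k/c_0)\|e\|_{\ell_2}$. It cannot rule out $\|\bar y\|_{\ell_2}=\infty$, which is exactly the situation when $q$ lies below the rate of the given trajectory. So there is no ``contradiction unless the energy stays bounded.'' Truncating at a finite horizon does not help. The summed cycle inequalities pair $\bar\sigma_t$ with $\bar\sigma_{t+j}$ for shifts $j$ of both signs. The truncated form therefore differs from a sum of valid local inequalities by boundary terms in the last few rescaled iterates, and these can dominate the total when the rescaled sequence grows. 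Exponential weighting tames the initialization term $e_t=(1+t/(s+1))a^tx_0$, since $a=(1-1/s)/\sqrt q<1$, but it does not tame these terminal terms.

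\textbf{The KYP step.} Your fallback does not close the gap either. The strict discrete KYP lemma gives only a symmetric $P$ with $K(P)\prec0$, not $P\succeq0$. So in the summed dissipation inequality you cannot discard $V_P(\xi_{N+1})$ to obtain a horizon-independent bound. Appendix~\ref{app:kyp-energy} gives a simple example in which every feasible $P$ is negative.

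\textbf{The missing ingredient.} What you need is a well-posedness (homotopy) argument, as in the proof of Proposition~\ref{prop:strict-separator-rate}.
\begin{enumerate}
\item Embed $f$ in the family $f_\tau=\tfrac12\|y\|^2+\tau h(y)$, $\tau\in[0,1]$. Each member lies in $\Fclass{\kappa}$, so the same cycle inequalities hold for every $\tau$.
\item The a priori estimate $\|\bar y\|_{\ell_2}\leq c_y\|e\|_{\ell_2}$ for square-summable solutions has a constant independent of $\tau$, $e$, $f$, and the dimension.
\item Increase $\tau$ in steps of $1/m$ with $m>2c_yc_g(s^2-1)$. Causality of $g$ and the modified input $e^{(n)}=e+\tfrac1m g(u^{(n)})$ let you apply the estimate at $\tau_{j-1}$ to a sequence agreeing with the $\tau_j$ trajectory up to time $n$. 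This bounds every partial sum $\sum_{t\leq n}\|\bar y_t\|^2$ by $4c_y^2\|e\|_{\ell_2}^2$.
\end{enumerate}
Only after this does the frequency-domain estimate yield $b(s,q)$. The paper proves that estimate via averaging over $p$th roots of unity.

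\textbf{A minor point.} Your worry about $d_s$ vanishing near the real axis is unfounded. Its only zero is $z=1-1/s$, of modulus $\sqrt{q_{\mathrm{quad}}(s)}<\sqrt q$. Continuity on the compact circle therefore already gives a uniform margin, and no extra quadratic-IQC term is needed.
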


\begin{proof}[Proof sketch]
Set $s=\sqrt\kappa>1$.
For the lower bound, suppose $\qopt(s)>q_{\mathrm{quad}}(s)$.
An optimal angular pair can then be chosen with a finitely supported measure, so the construction in Section~\ref{sec:variational} uses finitely many planar trajectories.
We construct a function in $\Fclass{\kappa}$ for which Nesterov's method, initialized with $x_{-1}=x_0$, attains this asymptotic factor.
If $\qopt(s)=q_{\mathrm{quad}}(s)$, the quadratic $f(x)=x^2/2$ attains the factor.
For the upper bound, fix $q\in(\qopt(s),1)$.
The interpolation conditions are infeasible at this factor.
A separation argument gives a nonnegative combination of finitely many cyclic monotonicity inequalities.
After rescaling the iterates, we sum these inequalities and use the NAG recurrence to bound their squared norms.
The key identity averages a finite sum of integer powers over equally spaced points on the unit circle.
A finite induction first establishes that the sums of squared norms are finite, and then gives
\[
 \|x_t-x_\star\|^2\leq bq^t\|x_0-x_\star\|^2,
\]
where $b$ depends only on $s$ and $q$.
Taking $t$th roots and letting $t\to\infty$ gives $\rhohor(s^2)\leq q$.
Letting $q\downarrow\qopt(s)$ proves the upper bound.
The full proof is given in Appendix~\ref{app:generic-bridges}.
\end{proof}

\paragraph{Relation to Gannot's upper bound.}
Proposition~2.13 of \citet{gannot2022frequency} gives an upper bound $q_{\rm G}(\kappa)$ on the asymptotic squared-distance factor.
At $\kappa=10$ and $\kappa=100$, we determine the exact factor using exact arithmetic.
Numerical evaluation of Gannot's upper bound gives a larger value in both cases.
The differences reported in Table~\ref{tab:nag-rate-values} are numerical approximations.

\begin{remark}[No general formula by radicals]
\label{rem:no-radical-formula}
At $\kappa=(1421/1250)^2$, the exact factor $\qopt(1421/1250)$ has a minimal
polynomial of degree $20$ over $\mathbb Q$, with Galois group $S_{20}$.
Consequently, no formula built from rational functions of $s$ with rational
coefficients and finitely many radicals can be defined and valid for every
$s>1$. Appendix~\ref{app:rate-radicals} gives the polynomial and its exact
certificate. This does not exclude other notions of closed form.
\end{remark}

\begin{corollary}[Asymptotic rate for large condition numbers]
\label{cor:large-kappa}
The exact factor satisfies
\begin{equation}
\label{eq:large-kappa}
 \lim_{\kappa\to\infty}
 \sqrt\kappa\left(1-\qopt(\sqrt\kappa)\right)=\frac43,
 \qquad
 \qopt(\sqrt\kappa)
 =1-\frac{4}{3\sqrt\kappa}+o(\kappa^{-1/2}).
\end{equation}
The corresponding asymptotic factor for the distance $\|x_t-x_\star\|$ is
$1-2/(3\sqrt\kappa)+o(\kappa^{-1/2})$.
\end{corollary}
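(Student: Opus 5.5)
The plan is to sandwich $\sqrt\kappa\,(1-\qopt(\sqrt\kappa))$ between two bounds that both tend to $4/3$. The upper-bound side of the limit, $\liminf\geq 4/3$, will come from Theorem~\ref{thm:main} combined with \citet[Proposition~2.13]{gannot2022frequency}. The lower-bound side, $\limsup\leq 4/3$, will come from explicit feasible points of the angular conditions \eqref{eq:star-feasible}, using $\qopt(s)=\max\mathcal Q(s)$ from Proposition~\ref{prop:fixed-domain-rate}. The statement about $\|x_t-x_\star\|$ then follows by taking square roots: $\sqrt{1-4/(3s)+o(1/s)}=1-2/(3s)+o(1/s)$.

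\emph{Upper bound on $\qopt$.} By Theorem~\ref{thm:main}, $\qopt(\sqrt\kappa)=\rhohor(\kappa)$. Gannot's factor $q_{\rm G}(\kappa)$ bounds the asymptotic squared-distance factor uniformly over the class, so $\rhohor(\kappa)\leq q_{\rm G}(\kappa)$. I will then expand Gannot's explicit expression in $1/s$ and check that $q_{\rm G}=1-4/(3s)+o(1/s)$, as asserted in the related-work discussion. This gives $\liminf_{s\to\infty}s(1-\qopt(s))\geq 4/3$.

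\emph{Lower bound on $\qopt$.} Fix $c>4/3$. I aim to show that $q=1-c/s$ lies in $\mathcal Q(s)$ for all large $s$. The natural scaling is $z=\sqrt q\,e^{i\theta}$ with $\theta=\alpha/s$, so that $z=\exp(w/s+O(s^{-2}))$ with $w=-c/2+i\alpha$. From \eqref{eq:nag-transfer}, $sz-(s-1)=1+w+O(1/s)$, hence
\[
 d_s(z)\to-(1+w)^2.
\]
Write $t=\tau s$. Then $z^t\to e^{w\tau}$ and $q^t\to e^{-c\tau}$. The coefficients $\bar c_{0t}$ and $\bar c_{t0}$ from \eqref{eq:edge-cost} converge, uniformly for $\tau$ in compacts, to the explicit functions
\[
 \operatorname{Re}\bigl[-\overline{(1+w)^2}\,(1-e^{\bar w\tau})\bigr]
 \quad\text{and}\quad
 \operatorname{Re}\bigl[-\overline{(1+w)^2}\,e^{-c\tau}(1-e^{-\bar w\tau})\bigr],
\]
averaged over a measure on $\alpha$. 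Since $1-q^t\to 1-e^{-c\tau}$, condition \eqref{eq:star-feasible} becomes a continuous-time family of linear inequalities in $(\nu,\varphi_0)$, indexed by $\tau>0$.

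I will then find, for each $c>4/3$, a finitely supported measure on $\alpha$ (one or two atoms, guided by the $\kappa=10$ and $\kappa=100$ examples) and a $\varphi_0$ satisfying the limiting inequalities \emph{strictly}. Strictness should be uniform away from $\tau=0$ and $\tau=\infty$, with controlled first-order behaviour as $\tau\to0$ and $\tau\to\infty$. A perturbation argument then transfers feasibility to large finite $s$. The threshold $4/3$ should emerge as the value of $c$ at which the best choice of $\alpha$ makes the limiting star inequalities just tight. This gives $\limsup s(1-\qopt(s))\leq c$ for every $c>4/3$.

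\emph{Main obstacle.} The hard step is this last transfer, at the two ends of the $\tau$ range. For $t=O(1)$, meaning $\tau\to0$, both sides of \eqref{eq:star-feasible} vanish. There I must expand to the next order in $1/s$ and check that the limiting slack has the right sign, or make it so by choosing $\varphi_0$ strictly inside its admissible interval. For $t\gg s$ the terms $q^t$ decay and only the $t\to\infty$ limit matters, so uniform strictness there is needed as well. If a single atom does not suffice to reach every $c>4/3$, I would fall back to two atoms $\alpha_1,\alpha_2$ and solve the small limiting LP explicitly.
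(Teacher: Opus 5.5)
Your upper-bound half ($\liminf_{s\to\infty}s(1-\qopt(s))\ge4/3$) is a legitimate alternative route. Theorem~\ref{thm:main} gives $\qopt(\sqrt\kappa)=\rhofix(\kappa)\le q_{\rm G}(\kappa)$, and Gannot's cubic does expand to $1-4/(3s)+o(1/s)$. The expansion is doubly degenerate, though: the $s^4$ part of $h_\kappa(r)$ is $(r-1)^2(r+8)$, and after substituting $r=1-\lambda/s$ the leading term is $s^2(3\lambda-4)^2$. The paper instead stays self-contained. It uses $\varphi_0\ge0$ together with the $t=1$ inequality, so $\bar c_{01}(\nu)\ge0$, and this fails at every angle once $q\ge e^{-\lambda/s}$ with $\lambda<4/3$. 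In your scaling this is the $\tau\to0$ computation $\operatorname{Re}[\overline{(1+w)^2}\,\bar w]=\alpha^2(\tfrac{3c}{2}-2)-\tfrac c2(1-\tfrac c2)^2$, which is negative for every $\alpha$ when $c\le4/3$.

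The genuine gap is the lower bound, which is the substance of the corollary. You never exhibit a feasible configuration, and the one you propose (one or two atoms in $\alpha=s\theta$) cannot work for $c$ near $4/3$. Two facts about your own limiting problem show this.

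First, divide \eqref{eq:star-feasible} by $1-q^t$. The lower and upper bounds then have the \emph{same} limit as $\tau\to0$, namely $M/c$ with $M=\int[\alpha^2(\tfrac{3c}{2}-2)-\tfrac c2(1-\tfrac c2)^2]\,d\nu$; their difference is the cost of the $2$-cycle $(0,t,0)$, which is $O(t^2/s^2)$. So $\varphi_0=M/c$ is forced, and there is no admissible interval to sit strictly inside. Combined with $\varphi_0\ge0$ (from $\tau\to\infty$), this gives $S:=\int\alpha^2\,d\nu\ge c(1-\tfrac c2)^2/(3c-4)$, which blows up as $c\downarrow4/3$. Meanwhile $\varphi_0\le(\tfrac32-\tfrac2c)S$ is a vanishing multiple of $S$.

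Second, with $\beta=1-c/2$ the lower inequality reads $\varphi_0\ge\bigl(\int\operatorname{Re}[(\alpha+i\beta)^2e^{w\tau}]\,d\nu-e^{-c\tau}\int\operatorname{Re}(\alpha+i\beta)^2\,d\nu\bigr)/(1-e^{-c\tau})$. At any $\tau\ge1$ at which every atom has $\alpha_j\tau$ within $0.4$ of $2\pi\integers$, the right side is at least $\tfrac14e^{-c\tau/2}S-O(1)$. By Dirichlet's simultaneous approximation theorem, an $m$-atom measure has such a time with $\tau\le16^m$. So for each fixed number of atoms the two facts contradict each other once $c$ is close enough to $4/3$. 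A direct check shows that a single atom already fails for every $\alpha$ at $c=1.37$.

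Thus $4/3$ is not the value at which some best finite choice of $\alpha$ becomes tight. It is reached only as the angular mass spreads over rescaled angles of size tending to infinity, so that the oscillations $\cos(\alpha\tau)$ dephase.

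This is exactly what the paper's construction supplies. It takes $\varphi_0=0$ and the continuous density \eqref{eq:rational-circle-measure} with $h=e^{-\gamma_s/s}$. In your variable its profile is asymptotically $\propto(\gamma^2+\alpha^2)^{-3}$, with the paper's $\gamma$. Its second moment is $\gamma^2/3=c(1-\tfrac c2)^2/(3c-4)$, which sits exactly on the bound above.

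For this density the required moments are known in closed form for every integer $t$ at finite $s$: they are $m_s(rh)^t$ times a quadratic in $t$, as in \eqref{eq:rational-circle-moments}. Bernoulli's inequality then reduces all infinitely many constraints to the two scalar conditions \eqref{eq:coefficient-tests}. After $\gamma_s$ is fixed by the implicit function theorem, their slacks are positive but only of order $s^{-2}$.

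These exact-in-$t$ formulas are what remove the limit transfer at $\tau\to0$ and $\tau\to\infty$ that you identify as the main obstacle. Without some such construction, your argument proves only the $\liminf$ half of the corollary.
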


\begin{proof}
The necessary condition $\bar c_{01}(\nu)\geq0$ gives an upper bound on the exact factor.
For the lower bound, fix $2/3<\alpha<3/4$.
For all sufficiently large $\kappa$, we construct a probability measure satisfying every interpolation inequality with $\varphi_0=0$ at
\[
 q=\exp(-2\alpha/\sqrt\kappa).
\]
Letting $\alpha\downarrow2/3$ gives the matching coefficient $4/3$.
The construction and full proof are given in Appendix~\ref{app:large-kappa}.
\end{proof}

Figure~\ref{fig:nag-rate-curve} compares numerical evaluations of the rate formula with Gannot's upper bound for $1\leq\kappa\leq1000$.
We compute Gannot's upper bound from the smaller positive root of the cubic in \cite[Proposition~2.13]{gannot2022frequency}.
The two curves nearly coincide at this scale; panel (c) shows their difference.
The cubic defining Gannot's upper bound also arises from a subset of our interpolation inequalities involving consecutive iteration indices and the minimizer.
Keeping only these inequalities enlarges the set of feasible factors $\mathcal Q(s)$ and therefore gives an upper bound on the exact factor by \eqref{eq:qcyc}.
Selected numerical values and details of their verification are given in Appendix~\ref{app:rate-table}.

\begin{figure}[t]
\centering
\includegraphics[width=\linewidth]{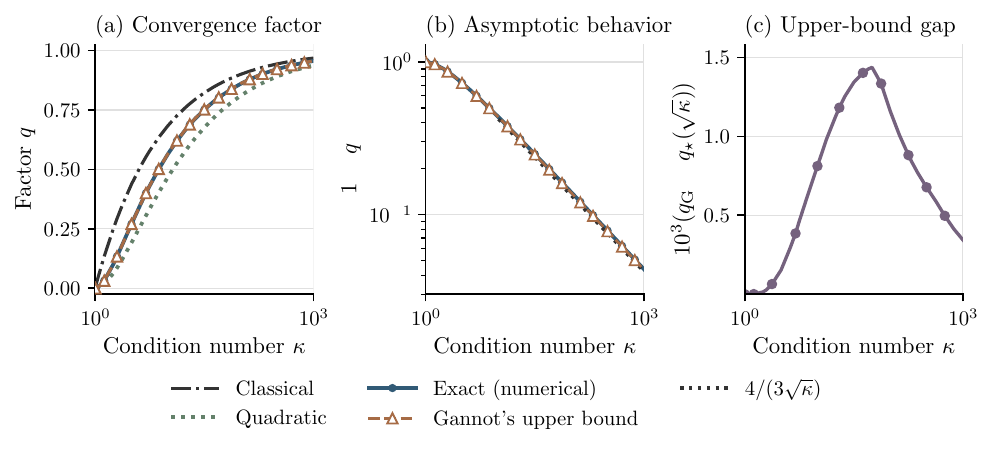}
\caption{Convergence factors for $1\leq\kappa\leq1000$.
(a) Numerical evaluations of $\qopt(\sqrt\kappa)$ using the angular conditions \eqref{eq:star-feasible} and \eqref{eq:qcyc}, compared with the quadratic rate and the classical and Gannot upper bounds.
At $\kappa=1$ we plot the limiting value zero; exact arithmetic certifies the values at $\kappa=10$ and $100$.
The remaining values are numerical approximations.
Filled circles show numerical evaluations of our rate formula, and hollow triangles show Gannot's upper bound.
(b) Comparison of $1-q$ with the asymptotic expression $4/(3\sqrt\kappa)$ from Corollary~\ref{cor:large-kappa}.
(c) The difference between Gannot's upper bound and the evaluated exact rate.
The largest displayed gap is $1.44\times10^{-3}$ (less than $0.18\%$ of Gannot's upper bound).}
\label{fig:nag-rate-curve}
\end{figure}

\begin{corollary}[Function gap and gradient norm]
\label{cor:metrics}
Replacing $R_t(f,x_0)$ in the two asymptotic factor definitions \eqref{eq:rho-fix}--\eqref{eq:rho-hor} by either
\[
 \frac{f(x_t)-f(x_\star)}{f(x_0)-f(x_\star)}
 \qquad\text{or}\qquad
 \frac{\|\nabla f(x_t)\|^2}{\|\nabla f(x_0)\|^2}
\]
gives the same exact factor as in Theorem~\ref{thm:main}.
The asymptotic factors for the distance $\|x_t-x_\star\|$ and the gradient norm $\|\nabla f(x_t)\|$ equal the square root of this factor.
\end{corollary}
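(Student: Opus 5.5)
We prove the two metrics separately, sandwiching each between multiples of the squared distance. The tools are the pointwise inequalities valid for every $f\in\Fclass{\kappa}$ with minimizer $x_\star$:
\[
 \tfrac12\|x-x_\star\|^2\leq f(x)-f(x_\star)\leq\tfrac\kappa2\|x-x_\star\|^2,
 \qquad
 \|x-x_\star\|\leq\|\nabla f(x)\|\leq\kappa\|x-x_\star\|.
\]
We apply them at $x_t$ and at $x_0$. This gives
\[
 \frac1\kappa R_t(f,x_0)\leq\frac{f(x_t)-f(x_\star)}{f(x_0)-f(x_\star)}\leq\kappa R_t(f,x_0)
\]
and, in the same way, $\kappa^{-2}R_t\leq\|\nabla f(x_t)\|^2/\|\nabla f(x_0)\|^2\leq\kappa^2R_t$. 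The constants depend only on $\kappa$. They are independent of $t$, of the dimension, of $f$ and of $x_0$. The denominators are positive because $x_0\neq x_\star$ and the minimizer is unique, with $\nabla f(x_0)\neq0$.

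Next we take limits, starting with the fixed-objective factor. For each fixed $(f,x_0)$ we take $t$th roots, and the constants $\kappa^{\pm1/t},\kappa^{\pm2/t}$ tend to $1$. So the $\limsup$ of the new ratio's $t$th root equals $\limsup_t R_t(f,x_0)^{1/t}$ exactly. Taking the supremum over $d,f,x_0$ gives the same $\rhofix(\kappa)$.

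For the horizon-dependent factor, the bounds hold uniformly, so they survive the inner supremum. The supremum of the new ratio at time $t$ lies between $\kappa^{-2}$ and $\kappa^{2}$ times $\sup R_t$. Taking $t$th roots and $\limsup$ again removes the constants. One caveat is needed: if $\sup R_t$ is infinite the bounds are vacuous, but the uniform upper bound $R_t\leq bq^t$ from the proof of Theorem~\ref{thm:main} shows it is finite.

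Combining the limits with Theorem~\ref{thm:main}, all factors equal $\qopt(\sqrt\kappa)$. Attainment also transfers: the single function and initial point from the theorem attain the distance factor, hence also attain the other two factors. For the unsquared quantities $\|x_t-x_\star\|$ and $\|\nabla f(x_t)\|$, the $t$th root of a square root is the square root of the $t$th root. So each asymptotic factor is $\sqrt{\qopt(\sqrt\kappa)}$, consistent with Corollary~\ref{cor:large-kappa}.

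There is no genuine obstacle. The only point needing care is that the comparison constants are uniform, so they commute with the supremum in $\rhohor$.
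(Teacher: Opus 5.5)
Your proposal is correct and follows the paper's proof: the same bounds from strong convexity and smoothness place the function-gap ratio between $\kappa^{-1}R_t$ and $\kappa R_t$ and the gradient ratio between $\kappa^{-2}R_t$ and $\kappa^{2}R_t$, with constants depending only on $\kappa$, so they pass through the suprema and vanish after taking $t$th roots. Your additions (positive denominators, attainment by the same fixed pair, and the square-root statement) spell out details the paper leaves implicit; the finiteness caveat is harmless but unnecessary, since the sandwich also holds in the extended reals and $\sup R_t$ is finite for each fixed $t$ anyway.
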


\begin{proof}
For every $f\in\Fclass{\kappa}$ and $x$, strong convexity and smoothness give
\[
 \frac12\|x-x_\star\|^2\leq f(x)-f(x_\star)
 \leq\frac\kappa2\|x-x_\star\|^2,
 \qquad
 \|x-x_\star\|^2\leq\|\nabla f(x)\|^2
 \leq\kappa^2\|x-x_\star\|^2.
\]
Dividing by the corresponding initial values bounds the first ratio between $\kappa^{-1}R_t$ and $\kappa R_t$, and the second between $\kappa^{-2}R_t$ and $\kappa^2R_t$.
These constants depend only on $\kappa$, so the bounds remain valid when taking the supremum over dimensions, functions, and initial points.
Their $t$th roots tend to one, which proves equality of the asymptotic factors in both definitions.
\end{proof}

Classical convergence guarantees also give upper bounds on the gradient norm through strong convexity and smoothness~\citep{nesterov2018lectures,lessard2016analysis}.
Corollary~\ref{cor:metrics} identifies its exact asymptotic factor, attained by the same fixed function and initialization as the distance factor.
Equality of these asymptotic factors does not imply equality of finite-iteration bounds.

\begin{figure}[!t]
\centering
\includegraphics[width=\linewidth]{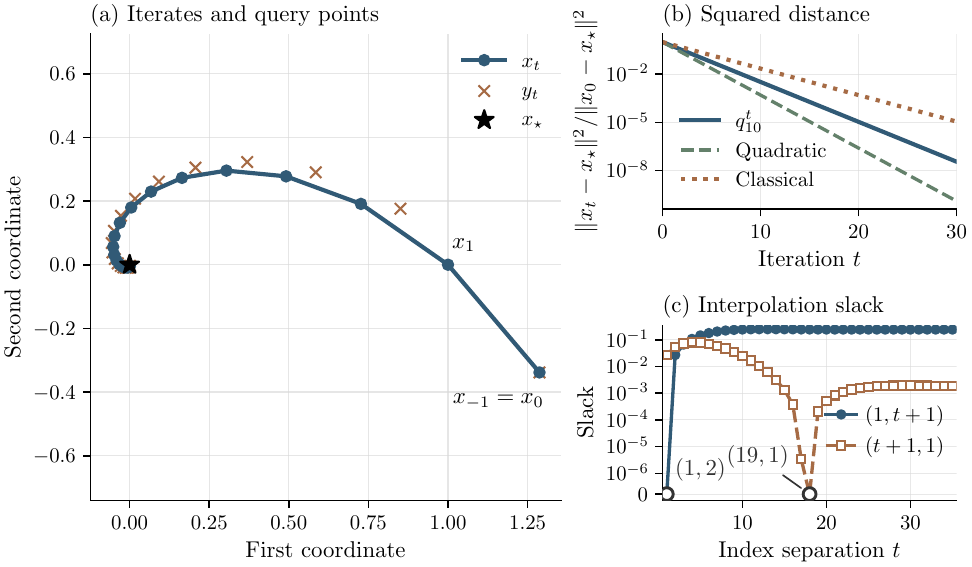}
\caption{The exact $\kappa=10$ example.
(a) Iterates and query points from the standard initialization.
(b) The normalized squared distance equals $q_{10}^t$, compared with the powers of the quadratic and classical factors.
(c) Slack in the interpolation inequalities for the ordered pairs $(1,t+1)$ and $(t+1,1)$.
Equality holds at $(1,2)$ and $(19,1)$.
The vertical axis is logarithmic above $10^{-6}$ and linear near zero.
Every comparison involving the initialization query is strict.
}
\label{fig:nag-exact-example}
\end{figure}

\section{A worst-case example at \texorpdfstring{$\kappa=10$}{kappa=10}}
\label{sec:example}

At $\kappa=10$, an optimal angular measure is concentrated at a single angle in $(0,\pi)$.
The corresponding trajectory therefore lies in $\reals^2$.
Identifying $\reals^2$ with the complex plane, the optimal factor and corresponding complex multiplier are
\begin{align*}
 q_{10}&=\qopt(\sqrt{10})
 \approx0.564423,\\
 z_\star
 &\approx0.726601+0.190984\,i,
 \qquad |z_\star|^2=q_{10}.
\end{align*}
The exact algebraic definitions are given in Appendix~\ref{app:kappa-ten}.

There exists a function $f\in\Fclass{10}(\reals^2)$, with minimizer at the origin, for which Nesterov's method produces
\begin{equation}
\label{eq:exact-spiral}
 x_{-1}=x_0=z_\star^{-1},
 \qquad
 x_t=z_\star^{\,t-1},\quad t\geq1.
\end{equation}
Each iteration rotates the error by the angle $\theta_\star=\arg z_\star\approx0.257032$ radians and multiplies its squared norm by $q_{10}$.
Therefore,
\begin{equation}
\label{eq:exact-decay}
 \frac{\|x_t-x_\star\|^2}{\|x_0-x_\star\|^2}
 =q_{10}^t,
 \qquad t\geq0.
\end{equation}
Appendix~\ref{app:kappa-ten} gives formulas for the query points, gradients, and function values.
These data satisfy the interpolation inequalities \eqref{eq:ssc-interpolation} for every ordered pair of indices.

For this example, the inequalities are active for the ordered pairs
\[
 (i,j)=(t,t+1)
 \qquad\text{and}\qquad
 (i,j)=(t+18,t),
 \qquad t\geq1.
\]
All inequalities comparing the initial query point $y_0$ with a later query point are strict.
The matching upper bound uses interpolation inequalities between query points at most eighteen iterations apart.
It proves that the worst-case asymptotic factor is at most $q_{10}$.
Independent checks using rigorous interval arithmetic and exact integer arithmetic verify the algebraic solution and all interpolation inequalities.
Appendix~\ref{app:kappa-ten} derives this bound from the inequalities for the cycle $(0,1,\ldots,18,0)$ and Proposition~\ref{prop:strict-separator-rate}.

\section*{Acknowledgements}

This research was conducted in the context of the Agentic AI for Mathematics (EF-LiOpt-3) project at the Berlin Mathematics Research Center MATH$^+$ (EXC-2046/2, project ID 390685689), funded by the Deutsche Forschungsgemeinschaft (DFG, German Research Foundation) under Germany's Excellence Strategy.
This research was also supported by a Sloan Research Fellowship from the Alfred P. Sloan Foundation (grant FG-2026-79479) and the ONR YIP Award N000142512147.
This work has been partially supported by OpenAI's Academic Researchers program.
The authors also thank Volker Mehrmann for helpful discussions.

\section*{AI use statement}
Generative AI tools assisted with literature comparison, formulation and
revision of mathematical arguments, proof writing, implementation of
verification and plotting scripts, formalization, and manuscript editing.
The associated checks include symbolic calculations, numerical and adversarial
tests, and machine-checked proofs for the results covered by the formalization.
The authors have reviewed all AI-assisted work, verified the claims, and take responsibility for the final manuscript.

\appendix
\section{Proof roadmap and dependencies}
\label{app:proof-roadmap}

The proof of Theorem~\ref{thm:main} has four steps.
First, an explicit finite cycle shows that cyclic monotonicity allows the interpolation values to be chosen as $\varphi_i=q^i\varphi_0$.
For a fixed factor $q$ and angular measure, determining these values therefore reduces to finding one scalar $\varphi_0$ satisfying \eqref{eq:star-feasible}.
Second, if the interpolation conditions have no solution at a given $q$, compactness gives a finite collection of cycle inequalities that no angular measure satisfies.
Convex separation then gives a \emph{strict cycle separator}: a nonnegative weighted sum of their costs that is strictly negative at every angle.
Third, suppose $\qopt(s)>q_{\mathrm{quad}}(s)$ and choose an optimal angular measure, whose existence follows from Lemma~\ref{lem:closed-rate-problem}.
For every $q\in(\qopt(s),1)$, the interpolation conditions have no solution.
We take a limit of the corresponding normalized separators as $q$ decreases to $\qopt(s)$.
This limit is nonpositive and has average zero under the chosen optimal measure.
The measure is therefore supported on the zeros of the limiting function.
We show that this function has only finitely many zeros.
We use the resulting measure to construct one finite-dimensional objective with the standard initialization and asymptotic factor $\qopt(s)$.
The quadratic example handles $\qopt(s)=q_{\mathrm{quad}}(s)$.
Fourth, every strict separator gives a uniform bound
$\|x_t-x_\star\|^2\leq b(s,q)q^t\|x_0-x_\star\|^2$.
Taking $t$th roots before letting $q\downarrow\qopt(s)$ proves equality of both asymptotic factors.

The lower-bound construction uses two limiting arguments.
Compactness of probability measures handles all cycle constraints without assuming a bound on the number of support angles.
Explicit bounds on the derivatives justify Taylor expansions of the limiting separator.
Together with Rolle's theorem and the fact that the limit is nonzero, these expansions show that it has only finitely many zeros.
Appendix~\ref{app:uniform-upper-bound} combines finite sums with the NAG recurrence to bound the sum of squared rescaled iterates.
A finite induction establishes that this sum is finite for every trajectory.
The resulting constant $b(s,q)$ is independent of the objective, dimension, initialization, and iteration count.

\section{Proof of the interpolation conditions}
\label{app:star-reduction}

\begin{proof}[Proof of Lemma~\ref{lem:stars}]
Fix $s>1$, $q\in(0,1)$, and a probability measure $\nu$.
For integers $t,u\geq1$, define
\[
 b_t=\frac{\bar c_{0t}(\nu)}{1-q^t},
 \qquad a_u=-\frac{\bar c_{u0}(\nu)}{1-q^u}.
\]
The interpolation conditions ask for a scalar between every lower bound $a_u$ and every upper bound $b_t$.
Assume cyclic monotonicity and consider the finite cycle
\[
 0,t,2t,\ldots,ut,(t-1)u,(t-2)u,\ldots,u,0.
\]
It first takes $u$ steps of length $t$ upwards, then $t$ steps of length $u$ downwards.
Repeated indices are allowed.
Using $\bar c_{i+k,j+k}=q^k\bar c_{ij}$, its cost is
\[
 \bar c_{0t}\sum_{j=0}^{u-1}q^{jt}
 +\bar c_{u0}\sum_{j=0}^{t-1}q^{ju}
 =(1-q^{tu})(b_t-a_u)\geq0.
\]
Since $q<1$, this gives $a_u\leq b_t$ for every $t,u\geq1$.
Consequently,
\[
 \varphi_0:=\sup_{u\geq1}a_u
\]
is finite, lies above every $a_u$, and lies below every $b_t$.
This proves \eqref{eq:star-feasible}.

Conversely, suppose \eqref{eq:star-feasible} holds and set $\varphi_i=q^i\varphi_0$.
For $j>i$, apply the upper bound in \eqref{eq:star-feasible} with $t=j-i$ to obtain
\[
 \varphi_i-\varphi_j
 =q^i(1-q^{j-i})\varphi_0
 \leq q^i\bar c_{0,j-i}=\bar c_{ij}.
\]
For $i>j$, the lower bound in \eqref{eq:star-feasible} with $t=i-j$ gives the same inequality.
The case $i=j$ is immediate.
Summing these inequalities around any finite cycle proves cyclic monotonicity.
\end{proof}

\subsection{Feasibility and attainment}

\begin{lemma}[Feasibility, the classical bound, and attainment]
\label{lem:closed-rate-problem}
For $s>1$, the set $\mathcal Q(s)$ in \eqref{eq:feasible-factors} is nonempty and has a largest element.
Every feasible factor $q$ satisfies $q\leq1-1/s$.
\end{lemma}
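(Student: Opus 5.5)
Nonemptiness is immediate from the observation already made: at $q=q_{\mathrm{quad}}(s)$ the choice $\nu=\delta_0$ gives $d_s(z)=0$, hence $c_{ij}\equiv0$, and $\varphi_0=0$ satisfies \eqref{eq:star-feasible}. For the bound $q\le1-1/s$, I would use the single necessary condition obtained from Lemma~\ref{lem:stars} with $t=1$: combining the two sides of \eqref{eq:star-feasible} at $t=1$ gives $\bar c_{01}(\nu)+\bar c_{10}(\nu)\ge0$, which is the two-cycle $(0,1,0)$. I would compute this cost explicitly. With $z=\sqrt q\,e^{i\theta}$, we have $c_{01}+c_{10}=\operatorname{Re}[\overline{d}(1-\bar z)]+\operatorname{Re}[\overline d\, z(\bar z-1)]=-\operatorname{Re}[\overline{d}\,|1-z|^2]=-|1-z|^2\operatorname{Re} d_s(z)$. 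The condition therefore becomes $\int |1-z|^2\operatorname{Re}d_s(z)\,d\nu\le0$.

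Next, I would show that $q>1-1/s$ forces $\operatorname{Re}d_s(z)>0$ at every angle, which contradicts this inequality. From \eqref{eq:nag-transfer}, $\operatorname{Re}d_s(z)>0$ is equivalent to $\operatorname{Re}\bigl[(s-(s-1)/z)^2\bigr]<0$. Writing $w=(s-1)/z$ with $|w|=(s-1)/\sqrt q$, this requires showing that $(s-w)^2$ has negative real part for all angles once $|w|$ is small enough. That does not hold uniformly in the angle, since at $\theta=0$ the quantity $(s-w)^2$ is positive. So the single pair $(0,1)$ alone is not the right certificate. Instead, I would use the cycle sum with the minimizer-free two-cycle for \emph{each} $t$ and combine it with Lemma~\ref{lem:stars}. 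Alternatively, and more naturally, I would recall that \eqref{eq:star-feasible} encodes a genuine trajectory of NAG on some function in $\Fclass{\kappa}$, at least for finitely supported $\nu$, through the construction of Section~\ref{sec:variational}. Nesterov's classical guarantee then gives $\|x_t\|^2\le C(1-1/s)^t$, whereas the constructed trajectory has $\|x_t\|^2\propto q^t$, so $q\le1-1/s$. The difficulty is that the classical bound uses interpolation inequalities involving the minimizer (index $\star$), which Lemma~\ref{lem:stars} does not contain directly. I would handle this by noting that the Lyapunov proof of Nesterov's bound uses only inequalities between consecutive query points and $x_\star$, and that the inequality at $x_\star$ is a limit of the pair $(0,T)$ inequalities as $T\to\infty$, since $z^T\to0$. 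In this way the classical Lyapunov decrease can be expressed through $\bar c_{t,t+1}$, $\bar c_{t+1,t}$ and the limiting star terms, which yields the bound for general (not necessarily finitely supported) $\nu$ by linearity in $\nu$. This transcription is the main obstacle.

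For attainment, I would fix any sequence $q_n\in\mathcal Q(s)$ with $q_n\uparrow\sup\mathcal Q(s)=:\bar q\le1-1/s<1$, together with feasible pairs $(\nu_n,\varphi_{0,n})$. By Prokhorov's theorem on the compact set $[0,\pi]$, I would pass to a weakly convergent subsequence $\nu_n\to\nu$. The coefficients $\bar c_{0t},\bar c_{t0}$ are continuous in $(q,\theta)$, so they converge for every $t$. The scalars $\varphi_{0,n}$ are bounded by the $t=1$ bounds $|\varphi_{0,n}|\le\max|c|/(1-q_n)$, which are uniform because $q_n$ stays in a compact subset of $(0,1)$ bounded below by $q_{\mathrm{quad}}(s)$. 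I would then extract a convergent subsequence of the $\varphi_{0,n}$ as well. Each inequality in \eqref{eq:star-feasible} is closed, so the limit is feasible at $\bar q$, and hence $\bar q=\max\mathcal Q(s)$.
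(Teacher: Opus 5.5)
Your nonemptiness and attainment arguments are fine, but the bound $q\le1-1/s$ is never actually proved, and attainment depends on it because it needs $\sup\mathcal Q(s)<1$. For attainment, bounding $\varphi_{0,n}$ through the $t=1$ inequalities and passing to the limit is a valid alternative to the paper's approach, which imposes all cycle inequalities on $(q,\nu)$ and recovers $\varphi_0$ from Lemma~\ref{lem:stars}.

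The problems with your proof of the bound are as follows.
\begin{itemize}
\item Your two-cycle computation has a sign error. In fact $c_{01}+c_{10}=\operatorname{Re}[\overline{d_s(z)}(1-\bar z+q-z)]=|1-z|^2\operatorname{Re}d_s(z)$, so the condition is $\int|1-z|^2\operatorname{Re}d_s\,d\nu\ge0$.
\item Even with the correct sign, this constraint cannot work in general. One has $\operatorname{Re}d_s<0$ at every angle only when $q>2(1-1/s)^2$, which exceeds $1-1/s$ for $s>2$. Below that threshold a point mass satisfies the constraint.
\item Your fallback through Nesterov's guarantee covers only finitely supported $\nu$. Even there, the realized self-similar trajectory has $x_{-1}\neq x_0$, so you would need the Lyapunov form of the guarantee valid from an arbitrary initial pair.
\item For general $\nu$ you defer to a ``transcription'' of the Lyapunov proof that you do not carry out. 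The statement is therefore unproven exactly where it is needed.
\end{itemize}

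The missing idea is that \eqref{eq:star-feasible} already contains the comparison with the optimum. Since $c_{t0}(\theta)=\operatorname{Re}[\overline{d_s(z)}(q^t-z^t)]\to0$ uniformly in $\theta$, letting $t\to\infty$ in the lower inequality gives $\varphi_0\ge0$. Each upper inequality then becomes the one-sided condition $\bar c_{0t}(\nu)\ge(1-q^t)\varphi_0\ge0$, which is much stronger than any two-sided cycle. The paper takes $t=2$: with $a=1-1/s$,
\[
 \frac{c_{02}(\theta)}{s^2-1}=a^2-1+\frac{2a(1-q)}{\sqrt q}\cos\theta+\frac{q^2-a^2}{q}\cos(2\theta).
\]
For $q\ge a$ both cosine coefficients are nonnegative. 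Using $\sqrt q\ge\sqrt a>a$, this gives $c_{02}(\theta)\le c_{02}(0)=-(s^2-1)(1-q)(1-a/\sqrt q)^2<0$ for every $\theta$. Integrating against any probability measure contradicts $\bar c_{02}(\nu)\ge0$. No realization, finite support, or Lyapunov argument is required.

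Your route could in principle be repaired for general $\nu$ by realizing the trajectory in the Hilbert space $L^2(\nu)$ and applying the initialization-free Lyapunov decrease there. That needs Hilbert-space versions of the interpolation construction and is far more machinery than this pointwise estimate.
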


\begin{proof}
Fix $(q,\nu,\varphi_0)$ satisfying \eqref{eq:star-feasible}.
Since $|z|=\sqrt q<1$, the edge formula gives the uniform limits
\[
 c_{t0}(\theta)\longrightarrow0,
 \qquad c_{0t}(\theta)\longrightarrow\operatorname{Re}d_s(z).
\]
Writing $m=\int\operatorname{Re}d_s(z)\,d\nu$, the interpolation conditions imply
\begin{equation}
\label{eq:star-origin-bounds}
 0\leq\varphi_0\leq m,
 \qquad \bar c_{01}(\nu)\geq(1-q)\varphi_0\geq0.
\end{equation}
These bounds also justify including the optimum, with $(\sigma,r,\varphi)=(0,0,0)$.
Indeed, for the prescribed trajectory they give
$0\leq\varphi_i=q^i\varphi_0\leq q^im=\langle r_i,\sigma_i\rangle$, precisely the two interpolation inequalities with the optimum.

We show directly that $q\geq1-1/s$ is impossible.
Put $a=1-1/s\in(0,1)$.
Expanding $c_{02}(\theta)$, which compares iterations $0$ and $2$, gives
\[
 \frac{c_{02}(\theta)}{s^2-1}
 =a^2-1+\frac{2a(1-q)}{\sqrt q}\cos\theta
       +\frac{q^2-a^2}{q}\cos(2\theta).
\]
If $q\geq a$, both cosine coefficients are nonnegative.
Since $\sqrt q\geq\sqrt a>a$, it follows that
\[
 c_{02}(\theta)\leq c_{02}(0)
 =-(s^2-1)(1-q)(1-a/\sqrt q)^2<0.
\]
This contradicts
$\bar c_{02}(\nu)\geq(1-q^2)\varphi_0\geq0$, which follows from
\eqref{eq:star-feasible} and \eqref{eq:star-origin-bounds}.
Thus every feasible factor satisfies $q<1-1/s$.

The choice $q=q_{\mathrm{quad}}(s)$ and $\nu=\delta_0$ has $d_s(z)=0$ and all cycle costs zero, so it is feasible with $\varphi_0=0$.
To prove attainment, restrict $q$ to the compact interval $[q_{\mathrm{quad}}(s),1-1/s]$ and impose all cycle inequalities on $(q,\nu)$.
For each fixed cycle its cost is jointly continuous in $q$ and $\theta$ on this interval times $[0,\pi]$.
Each averaged inequality therefore defines a closed set under convergence of $q$ and weak convergence of $\nu$.
Their intersection is a nonempty closed subset of the compact product
$[q_{\mathrm{quad}}(s),1-1/s]\times\ProbMeasures([0,\pi])$.
The continuous objective $q$ attains its maximum there.
Lemma~\ref{lem:stars} supplies the corresponding scalar $\varphi_0$.
Since smaller factors cannot improve on $q_{\mathrm{quad}}(s)$, this is the largest feasible factor.
\end{proof}

\subsection{Proof of the fixed-domain linear formulation}
\label{app:fixed-domain-rate}

We prove Proposition~\ref{prop:fixed-domain-rate} using the bound $q\leq1-1/s$ from Lemma~\ref{lem:closed-rate-problem} and the equivalent interpolation conditions in Lemma~\ref{lem:stars}.
This proof does not use Theorem~\ref{thm:main}.
Put $\alpha_s=1-1/s$ and $J(\mu)=\int |z|^2\,d\mu(z)$.
Substituting $d_s(z)=-(s^2-1)(z-\alpha_s)^2/z^2$ into \eqref{eq:disk-cycle-cost} gives, for $\gamma=(n_0,\ldots,n_m=n_0)$,
\begin{equation}
\label{eq:disk-cycle-polynomial}
 \mathcal C_\gamma^s(z)
 =-\operatorname{Re}\!\left[
 (\bar z-\alpha_s)^2
 \sum_{j=0}^{m-1}z^{n_j+2}
 (\bar z^{n_j}-\bar z^{n_{j+1}})\right].
\end{equation}
This polynomial in $\operatorname{Re}z$ and $\operatorname{Im}z$ vanishes at $z=0$, justifying the continuous extension used in the main text.
Conjugating $z$ preserves the cost.
For a nonnegative integer $t$, let $\gamma+t$ denote the cycle obtained by adding $t$ to every index of $\gamma$.
Factoring $z^t\bar z^t=|z|^{2t}$ out of each summand gives
\begin{equation}
\label{eq:disk-cost-shift}
 \mathcal C_{\gamma+t}^s(z)
     =|z|^{2t}\mathcal C_\gamma^s(z),
 \qquad z\in\mathbb C.
\end{equation}

\begin{lemma}[Radial reweighting]
\label{lem:disk-radial-reweighting}
Let $\mu$ be feasible for \eqref{eq:disk-rate-lp} with $m=J(\mu)>0$.
Define $d\eta(z)=|z|^2\,d\mu(z)/m$, which gives greater relative weight to larger values of $|z|^2$.
Then $\eta$ is feasible for the same problem and
\begin{equation}
\label{eq:disk-variance-improvement}
 J(\eta)-J(\mu)
 =\frac{\int |z|^4\,d\mu-m^2}{m}
 =\frac{\operatorname{Var}_\mu(|z|^2)}{m}.
\end{equation}
\end{lemma}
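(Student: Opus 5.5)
The plan has three parts: show that $\eta$ is a probability measure on $D_s$, transfer each cycle constraint from $\mu$ to $\eta$ using the shift identity \eqref{eq:disk-cost-shift}, and then compute the change in the objective.

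\emph{Probability measure.} Since $|z|^2\geq0$ and $\int|z|^2\,d\mu=m>0$, the measure $\eta$ is nonnegative and has total mass $1$. It is supported in the support of $\mu$, hence in $D_s$.

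\emph{Feasibility.} Fix a cycle $\gamma\in\Gamma$. Then
\[
 \int\mathcal C_\gamma^s(z)\,d\eta(z)
 =\frac1m\int|z|^2\,\mathcal C_\gamma^s(z)\,d\mu(z)
 =\frac1m\int\mathcal C_{\gamma+1}^s(z)\,d\mu(z)
\]
by \eqref{eq:disk-cost-shift} with $t=1$. The shifted cycle $\gamma+1$ again has nonnegative integer indices, so $\gamma+1\in\Gamma$. Feasibility of $\mu$ therefore makes the right-hand side nonnegative. The identity \eqref{eq:disk-cost-shift} holds for every $z$, including $z=0$, where both sides vanish by the continuous extension. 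Each $\mathcal C_\gamma^s$ is a polynomial, hence bounded and continuous on the compact disk, so all integrals are finite.

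\emph{Objective.} By definition, $J(\eta)=\frac1m\int|z|^4\,d\mu$. Subtracting $J(\mu)=m=m^2/m$ gives
\[
 J(\eta)-J(\mu)=\frac{\int|z|^4\,d\mu-m^2}{m}.
\]
The numerator is $\operatorname{Var}_\mu(|z|^2)$, since $m$ is the $\mu$-mean of $|z|^2$. This variance is finite because $|z|^2\leq1$ on $D_s$.

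The only nontrivial point is that the family of constraints in \eqref{eq:disk-rate-lp} is closed under index shifts. This holds because $\Gamma$ contains all cycles with nonnegative indices, so I expect no real obstacle.
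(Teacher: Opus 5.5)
Your proposal is correct and follows the paper's proof essentially line for line. You show $\eta$ is a probability measure because $|z|^2\geq0$ integrates to $m$, obtain feasibility from $\int\mathcal C_\gamma^s\,d\eta=\frac1m\int\mathcal C_{\gamma+1}^s\,d\mu\geq0$ via the shift identity \eqref{eq:disk-cost-shift}, and read off the objective identity from $J(\eta)=\int|z|^4\,d\mu/m$; your extra checks ($\gamma+1\in\Gamma$, the point $z=0$, finiteness of the integrals) are correct details the paper leaves implicit.
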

\begin{proof}
Since $|z|^2\geq0$ and $\int |z|^2\,d\mu=m$, $\eta$ is a probability measure on $D_s$.
For every cycle $\gamma$, the shift identity \eqref{eq:disk-cost-shift} and feasibility of $\mu$ give
\[
 \int\mathcal C_\gamma^s\,d\eta
 =\frac1m\int |z|^2\mathcal C_\gamma^s\,d\mu
 =\frac1m\int\mathcal C_{\gamma+1}^s\,d\mu\geq0.
\]
Also $J(\eta)=\int |z|^4\,d\mu/m$, which proves the identity.
\end{proof}

\begin{proof}[Proof of Proposition~\ref{prop:fixed-domain-rate}]
Probability measures on the compact disk form a weakly compact set.
Every cycle cost is continuous, so the feasible set of \eqref{eq:disk-rate-lp} is closed.
It is nonempty: $\delta_{\alpha_s}$ has zero costs and objective $\alpha_s^2=q_{\mathrm{quad}}(s)$.
Hence the feasible set is nonempty and compact under weak convergence.
Since $z\mapsto|z|^2$ is continuous and bounded on $D_s$, the objective $J$ is continuous under weak convergence.
The extreme value theorem, applied in this topology, therefore gives a maximum $m\geq q_{\mathrm{quad}}(s)>0$.
If an optimal measure $\mu$ had $\operatorname{Var}_\mu(|z|^2)>0$, Lemma~\ref{lem:disk-radial-reweighting} would give a feasible measure with a strictly larger objective value.
Thus every optimal measure $\mu$ satisfies $\operatorname{Var}_\mu(|z|^2)=0$ and is supported on the circle $|z|^2=m$, with $0<m\leq\alpha_s<1$.

We now prove the fixed-factor equivalence. For any $q\in\mathcal Q(s)$,
Lemma~\ref{lem:closed-rate-problem} gives $q\leq\alpha_s$.
Choose an angular measure $\nu$ satisfying \eqref{eq:star-feasible}.
Replace each angle $\theta$ by the point $\sqrt q\,e^{i\theta}$ to obtain a probability measure $\mu$ on the corresponding circle in $D_s$.
By Lemma~\ref{lem:stars}, $\nu$ satisfies all cycle inequalities.
The positive scaling in \eqref{eq:disk-cycle-cost} then makes $\mu$ feasible for \eqref{eq:disk-rate-lp}, with objective value $q$.
Conversely, let $\mu$ be feasible for \eqref{eq:disk-rate-lp} and supported on $|z|^2=q$ for some $q\in(0,1)$.
Map each point $z=\sqrt q\,e^{\pm i\theta}$ to $\theta\in[0,\pi]$ to obtain an angular probability measure $\nu$.
Conjugation preserves every cost, and division by the common positive factor $q^2/(s^2-1)$ gives all cycle inequalities for $\nu$.
Lemma~\ref{lem:stars} then gives a scalar $\varphi_0$ satisfying \eqref{eq:star-feasible}, so $q\in\mathcal Q(s)$.
This argument remains valid at $q=q_{\mathrm{quad}}(s)$, even where $d_s(z)=0$, because it does not divide by $d_s(z)$.

Applying this equivalence to an optimizing disk measure gives
$m\in\mathcal Q(s)$.
Conversely, for every $q\in\mathcal Q(s)$, the construction above gives a feasible measure for \eqref{eq:disk-rate-lp} with objective value $q$, so $q\leq m$.
Thus $m=\max\mathcal Q(s)$, proving \eqref{eq:qcyc}.
The lower bound follows from $q_{\mathrm{quad}}(s)\in\mathcal Q(s)$, and the upper bound
from Lemma~\ref{lem:closed-rate-problem}.
\end{proof}

At $s=1$ the disk reduces to $\{0\}$ and the polynomial problem has value
zero; the division by $m$ is not needed in that case. The proof above does
not assert that every feasible measure has one radius, only that every
optimizing measure does.

\paragraph{Finite-index relaxations and duality.}
Let $q_{\mathrm{rel},n}(s)$ be the value of \eqref{eq:disk-rate-lp} when only cycles with
vertices in $\{0,\ldots,n\}$ are imposed. These conditions can equivalently
be written using real potentials $u_0=0,u_1,\ldots,u_n$ as
\[
 u_j-u_i\leq-\int\operatorname{Re}\!\left[
 z^{i+2}(\bar z-\alpha_s)^2(\bar z^i-\bar z^j)
 \right]\,d\mu(z),
 \qquad 0\leq i,j\leq n.
\]
Potentials imply the cycle inequalities by telescoping; conversely,
shortest-path costs from vertex zero give such potentials when no cycle
is negative.

\begin{proposition}[Finite-index bounds and duality]
\label{prop:disk-finite-dual}
Each $q_{\mathrm{rel},n}(s)$ is attained, and
\[
 \alpha_s=q_{\mathrm{rel},0}(s)\geq q_{\mathrm{rel},1}(s)\geq\cdots\geq\qopt(s),
 \qquad \lim_{n\to\infty}q_{\mathrm{rel},n}(s)=\qopt(s).
\]
Moreover,
\begin{equation}
\label{eq:disk-polynomial-dual}
 \qopt(s)=\inf\left\{b:
 \begin{array}{l}
  \Gamma_0\subset\Gamma\text{ finite},\quad\lambda_\gamma\geq0,\\
  b-|z|^2-\displaystyle\sum_{\gamma\in\Gamma_0}
       \lambda_\gamma\mathcal C_\gamma^s(z)\geq0,
       \quad z\in D_s
 \end{array}\right\}.
\end{equation}
The dual constraint requires a real polynomial in $\operatorname{Re}z$ and $\operatorname{Im}z$ to be nonnegative on $D_s$.
For every $b>\qopt(s)$ such a polynomial can be strictly positive on $D_s$.
No finite dual attainment at $b=\qopt(s)$ is asserted.
\end{proposition}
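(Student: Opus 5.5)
The plan is to treat the finite-index relaxations by weak compactness, exactly as in the proof of Proposition~\ref{prop:fixed-domain-rate}, and then obtain the dual formula from a compactness-plus-finite-dimensional-separation argument.

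\emph{Relaxations.} For $n=0$, every cycle has all vertices equal to $0$, so each summand $c_{00}$ vanishes and every constraint is trivial. The relaxed problem then maximizes $|z|^2$ over $D_s$, which gives $q_{\mathrm{rel},0}(s)=\alpha_s$, attained by a Dirac mass on the boundary circle. Enlarging $n$ only adds constraints, so the sequence is nonincreasing. Every relaxation drops constraints of \eqref{eq:disk-rate-lp}, so $q_{\mathrm{rel},n}(s)\geq\qopt(s)$ for every $n$.

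\emph{Attainment and the limit.} For attainment, the feasible set is an intersection of sets $\{\mu:\int\mathcal C_\gamma^s\,d\mu\geq0\}$. Each such set is weakly closed because every $\mathcal C_\gamma^s$ is continuous on $D_s$ by \eqref{eq:disk-cycle-polynomial}. The feasible set is nonempty, since it contains $\delta_{\alpha_s}$, and $J$ is weakly continuous, so the maximum is attained. For the limit, I choose optimal measures $\mu_n$ and extract a weakly convergent subsequence $\mu_{n_k}\to\mu$. Any cycle $\gamma$ has vertices in $\{0,\ldots,N\}$ for some $N$, so $\int\mathcal C_\gamma^s\,d\mu_{n_k}\geq0$ for all $n_k\geq N$. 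Passing to the limit shows that $\mu$ is feasible for the full problem. Hence $\lim_n q_{\mathrm{rel},n}(s)=J(\mu)\leq\qopt(s)$, and monotonicity gives equality.

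\emph{Weak duality.} Suppose $b-|z|^2-\sum_{\gamma\in\Gamma_0}\lambda_\gamma\mathcal C_\gamma^s\geq0$ on $D_s$. Integrating against any feasible $\mu$ gives $b\geq J(\mu)+\sum_{\gamma\in\Gamma_0}\lambda_\gamma\int\mathcal C_\gamma^s\,d\mu\geq J(\mu)$. Therefore the infimum in \eqref{eq:disk-polynomial-dual} is at least $\qopt(s)$.

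\emph{Strong duality.} Fix $b>\qopt(s)$ and set $b'=(b+\qopt(s))/2$. The weakly closed sets $K_\gamma=\{\mu\in\ProbMeasures(D_s):J(\mu)\geq b',\ \int\mathcal C_\gamma^s\,d\mu\geq0\}$ have empty intersection, because any common element would be feasible with objective above $\qopt(s)$. By compactness of $\ProbMeasures(D_s)$, a finite subfamily indexed by some $\Gamma_0$ already has empty intersection. Consider the map
\[
 \mu\mapsto\Bigl(J(\mu)-b',\ \bigl(\textstyle\int\mathcal C_\gamma^s\,d\mu\bigr)_{\gamma\in\Gamma_0}\Bigr).
\]
It is linear and continuous, so its image $V$ is a convex compact subset of $\reals^{1+|\Gamma_0|}$. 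The empty intersection says exactly that $V$ is disjoint from the closed nonnegative orthant. Strict separation of a compact convex set from a closed convex cone yields a vector $(\lambda_0,\lambda)$ with $(\lambda_0,\lambda)\cdot v<0$ for all $v\in V$ and $(\lambda_0,\lambda)\cdot w\geq0$ on the orthant. The latter condition forces $(\lambda_0,\lambda)\geq0$.

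The delicate point is showing $\lambda_0>0$. If $\lambda_0=0$, then $\sum_{\gamma\in\Gamma_0}\lambda_\gamma\int\mathcal C_\gamma^s\,d\mu<0$ for every $\mu$. This fails at $\mu=\delta_{\alpha_s}$, where all costs vanish. So I may divide by $\lambda_0$. Applying the resulting strict inequality to Dirac measures $\delta_z$ gives, pointwise on $D_s$,
\[
 b-|z|^2-\sum_{\gamma\in\Gamma_0}(\lambda_\gamma/\lambda_0)\,\mathcal C_\gamma^s(z)>b-b'>0.
\]
This certificate is strictly positive on $D_s$, which proves both \eqref{eq:disk-polynomial-dual} and the strict-positivity claim. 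By \eqref{eq:disk-cycle-polynomial}, the certificate is a real polynomial in $\operatorname{Re}z$ and $\operatorname{Im}z$. No attainment at $b=\qopt(s)$ is needed, consistent with the statement.
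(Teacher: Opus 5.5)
Your proposal is correct and follows the paper's proof essentially step for step. You use weak compactness and closedness for attainment and for the limit, integration for weak duality, and then a finite-intersection argument followed by strict separation of the compact moment image from a shifted orthant, with $\lambda_0>0$ forced by evaluating at $\delta_{\alpha_s}$. The only small differences are that you verify $q_{\mathrm{rel},0}(s)=\alpha_s$ explicitly and separate at the midpoint level $b'$, which gives an explicit uniform margin $b-b'$ where the paper separates at level $b$ directly.
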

\begin{proof}
Each finite-index feasible set is closed in the compact probability space
and contains $\delta_{\alpha_s}$, so its maximum exists. The sets are
nested. A weak limit of optimizers along $n\to\infty$ satisfies every fixed
cycle constraint, since that constraint holds eventually. Continuity of
$J$ proves that the decreasing limit of $q_{\mathrm{rel},n}(s)$ equals $\qopt(s)$.

Integrating any polynomial inequality in \eqref{eq:disk-polynomial-dual}
against a feasible measure gives $J(\mu)\leq b$. Conversely, fix
$b>\qopt(s)$. Compactness supplies finitely many cycle constraints whose
intersection with $J(\mu)\geq b$ is already empty. Their compact convex
moment image is disjoint from
$\{t\in\reals:t\geq b\}\times\reals_+^{|\Gamma_0|}$.
Strict separation gives nonnegative coefficients
$(\lambda_0,(\eta_\gamma)_{\gamma\in\Gamma_0})$ with
\[
 \lambda_0|z|^2+\sum_{\gamma\in\Gamma_0}\eta_\gamma
           \mathcal C_\gamma^s(z)<\lambda_0b,
 \quad z\in D_s.
\]
The coefficient $\lambda_0$ is positive: otherwise evaluating at
$z=\alpha_s$, where all cycle costs vanish, contradicts strictness.
Division by $\lambda_0$ proves the claim.
\end{proof}

For fixed cycles and degree bounds, one can search for a sufficient dual
certificate of the form
\[
 b-|z|^2-\sum_{\gamma\in\Gamma_0}\lambda_\gamma
              \mathcal C_\gamma^s
   =S_0+(\alpha_s-|z|^2)S_1,
\]
where $S_0,S_1$ are sums of squares of real polynomials in
$\operatorname{Re}z$ and $\operatorname{Im}z$. Coefficient matching and
positive semidefinite Gram matrices give a semidefinite program with fixed
data. A checked polynomial certificate gives an upper bound, but no finite
exact degree or runtime improvement is claimed. Likewise, replacing the
disk by a finite support grid changes the feasible set: a grid computation
with truncated cycles is not automatically an upper or lower bound for the
original problem.

\section{Full proof of the exact-rate theorem}
\label{app:generic-bridges}

This section proves Theorem~\ref{thm:main}.
We first show that, when the interpolation conditions are infeasible, a nonnegative combination of finitely many cycle functions is strictly negative at every angle.
This separation argument is used in both the lower and upper bounds.

For the lower bound, we show that, if $\qopt(s)>q_{\mathrm{quad}}(s)$, an optimal feasible measure can be supported on finitely many angles.
We use this measure to construct one finite-dimensional objective and initial point satisfying $x_{-1}=x_0$ and attaining the asymptotic factor $\qopt(s)$.
If $\qopt(s)=q_{\mathrm{quad}}(s)$, a quadratic objective attains the factor.

For the upper bound, fix any $q\in(\qopt(s),1)$.
The negative combination of cycle functions gives a convergence bound with a constant depending only on $s$ and $q$.
The bound therefore holds uniformly over objectives, dimensions, initial points, and iteration counts.
Letting $q\downarrow\qopt(s)$ identifies both worst-case asymptotic factors with $\qopt(s)$.

The separation argument uses convex hulls and the separating hyperplane theorem.
Compactness of probability measures connects the finite formulation to the original interpolation conditions.
Rockafellar's cyclic-monotonicity theorem supplies the interpolating objective.
Taylor's theorem and Rolle's theorem show that an optimal measure has finite support.
The upper bound uses finite sums and induction applied to the rescaled NAG recurrence.
Each argument below states its hypotheses and checks the required implications.

\subsection{Finite alternatives, attainment, and finite support}

\begin{lemma}[Finite certificate of infeasibility]
\label{lem:strict-alternative}
Fix $s>1$ and $q\in(q_{\mathrm{quad}}(s),1)$.
For a finite cycle $\gamma=(i_0,i_1,\ldots,i_m=i_0)$, define the sum of the interpolation terms \eqref{eq:edge-cost} by
\[
 c_\gamma(\theta)=\sum_{j=0}^{m-1}c_{i_j i_{j+1}}(\theta).
\]
Exactly one of the following holds:
\begin{enumerate}
 \item Every finite list of cycles $\gamma_1,\ldots,\gamma_n$ admits angles $\theta_1,\ldots,\theta_m\in[0,\pi]$ and weights $a_i\geq0$, with $1\leq m\leq n+1$, satisfying
 \[
  \sum_{i=1}^m a_i=1,
  \qquad
  \sum_{i=1}^m a_i c_{\gamma_j}(\theta_i)\geq0,
  \qquad j=1,\ldots,n.
 \]
 \item There are finitely many cycles $\gamma_1,\ldots,\gamma_n$
 and weights $w_j\geq0$ such that
 \[
  \sum_{j=1}^nw_j
  c_{\gamma_j}(\theta)<0,
  \qquad 0\leq\theta\leq\pi.
 \]
\end{enumerate}
The angles and weights in alternative~1 may depend on the selected cycles.
\end{lemma}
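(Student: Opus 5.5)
Fix a finite list of cycles $\gamma_1,\ldots,\gamma_n$ and consider the continuous map $F:[0,\pi]\to\reals^n$, $F(\theta)=(c_{\gamma_1}(\theta),\ldots,c_{\gamma_n}(\theta))$. Continuity holds because each $c_{ij}(\theta)$ is a trigonometric polynomial in $\theta$ for fixed $s$ and $q$, by \eqref{eq:edge-cost} and \eqref{eq:nag-transfer}. Let $K=\operatorname{conv}F([0,\pi])$. Since $F([0,\pi])$ is compact, $K$ is compact and convex, and by Carath\'eodory's theorem every point of $K$ is a convex combination of at most $n+1$ points of $F([0,\pi])$. The plan is to show that, for this fixed list, either $K$ meets the closed orthant $\reals^n_+$ or a strict separator exists. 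The statement then follows by quantifying over all finite lists.

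\textbf{Step 1 (separation for a fixed list).} Suppose $K\cap\reals^n_+=\emptyset$. Both sets are closed and convex, and $K$ is compact, so the strict separating hyperplane theorem gives $w\in\reals^n$ and $\beta$ with
\[
 \langle w,k\rangle<\beta\leq\langle w,p\rangle
 \qquad\text{for all } k\in K,\ p\in\reals^n_+ .
\]
Taking $p=0$ gives $\beta\leq0$. Letting $p$ be a large multiple of a coordinate vector forces $w_j\geq0$ for every $j$. Hence $\sum_j w_jc_{\gamma_j}(\theta)<\beta\leq0$ for every $\theta$, which is alternative~2 for this list.

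Conversely, if $K\cap\reals^n_+\neq\emptyset$, Carath\'eodory gives angles $\theta_1,\ldots,\theta_m$ with $m\leq n+1$ and weights $a_i\geq0$ summing to one, satisfying $\sum_i a_iF(\theta_i)\in\reals^n_+$. This is precisely the condition required in alternative~1 for this list.

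\textbf{Step 2 (the dichotomy).} If every finite list lands in the first case, alternative~1 holds. Otherwise some list lands in the second case, and Step~1 produces alternative~2.

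It remains to check that the two alternatives are mutually exclusive. Suppose alternative~2 holds with cycles $\gamma_1,\ldots,\gamma_n$ and weights $w_j\geq0$. Applying alternative~1 to the same list yields a probability combination with $\sum_i a_ic_{\gamma_j}(\theta_i)\geq0$ for each $j$. Multiplying by $w_j$ and summing gives
\[
 0\leq\sum_i a_i\sum_j w_jc_{\gamma_j}(\theta_i)<0,
\]
where the strict inequality uses that the $a_i$ sum to one and each inner sum is negative. This is a contradiction.

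\textbf{Main obstacle.} The only delicate point is the strict separation step: it needs compactness of $K$, which comes from continuity of $F$ on the compact interval $[0,\pi]$. One should also note that the hypothesis $q>q_{\mathrm{quad}}(s)$ plays no role in this argument beyond fixing the data $s$ and $q$. The statement itself leaves compactness in the measure space, and the passage from finite lists to all cycles, to later lemmas.
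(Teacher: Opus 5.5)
Your proposal is correct and follows essentially the same route as the paper. Both arguments take the compact convex hull of the cost vectors, use Carath\'eodory to get at most $n+1$ angles, strictly separate the hull from $\reals^n_+$ with nonnegativity of $w$ forced by unboundedness over the orthant, and rule out both alternatives holding at once by the same averaging contradiction.
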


\emph{Meaning of the weights.}
The weights $a_i$ in alternative~1 average the cycle costs over the selected angles and sum to one.
The weights $w_j$ in alternative~2 combine cycle inequalities and need not sum to one.
The same $w_j$ give a strictly negative sum at every angle, so its average is also negative for every choice of angles and nonnegative weights $a_i$ summing to one.
Thus no such mixture can satisfy all the selected cycle inequalities.

\begin{proof}
For any fixed list $\gamma_1,\ldots,\gamma_n$, consider the cost vectors
\[
 (c_{\gamma_1}(\theta),\ldots,c_{\gamma_n}(\theta)),
 \qquad \theta\in[0,\pi].
\]
Their convex hull is compact because the cycle costs are continuous.
By Carath\'eodory's theorem, every point in this hull is a convex combination of at most $n+1$ cost vectors.
Thus the hull intersects $\reals_+^n$ exactly when the inequalities in alternative~1 hold for the selected cycles.

If alternative~1 fails, choose a list for which the hull and $\reals_+^n$ are disjoint.
The separating hyperplane theorem gives a vector $w$ such that
\[
 \max_{\theta\in[0,\pi]}\sum_{j=1}^n w_jc_{\gamma_j}(\theta)
 <\inf_{v\in\reals_+^n}w^Tv.
\]
The vector $w$ must be nonnegative, since a negative component would make the infimum $-\infty$.
The infimum is therefore zero, which gives alternative~2.

If both alternatives held, choose angles and weights from alternative~1 for the cycles in alternative~2.
Then
\[
 0\leq\sum_{j=1}^n w_j\sum_{i=1}^m a_i c_{\gamma_j}(\theta_i)
 =\sum_{i=1}^m a_i\sum_{j=1}^n w_j c_{\gamma_j}(\theta_i)<0,
\]
a contradiction.
\end{proof}

\paragraph{Connection with the measure formulation.}
Alternative~1 is equivalent to the existence of a probability measure satisfying all cycle inequalities.
For one direction, the average of each finite cost vector under such a measure lies in the compact convex hull used above.
Carath\'eodory's theorem then gives the angles and weights in alternative~1.
Conversely, alternative~1 gives a discrete probability measure for each finite list of constraints.
Probability measures on $[0,\pi]$ form a compact set when convergence is tested by averages of continuous functions.
Each cycle inequality defines a closed subset, and every finite intersection is nonempty.
Compactness therefore gives one measure satisfying all cycle inequalities.
When $\qopt(s)>q_{\mathrm{quad}}(s)$, the next lemma gives one finite set of angles and weights satisfying all cycles at the optimal factor.

\begin{lemma}[Finite support at the optimal factor]
\label{lem:finite-optimal-support}
Fix $s>1$ and suppose $\qopt(s)>q_{\mathrm{quad}}(s)$.
There are a probability measure $\nu_\star$ supported on finitely many angles in $[0,\pi]$ and a real scalar $\varphi_0^\star$ that satisfy \eqref{eq:star-feasible} at $q=\qopt(s)$.
\end{lemma}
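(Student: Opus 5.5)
The plan follows the roadmap in Appendix~\ref{app:proof-roadmap}: take a limit of strict separators as $q\downarrow\qopt(s)$, and show that the limiting function is nonpositive, nonzero, and real-analytic in $\theta$. It then has finitely many zeros on $[0,\pi]$, and any optimal angular measure must be supported on those zeros.

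\emph{Step 1 (existence of an optimal measure).} By Lemma~\ref{lem:closed-rate-problem}, fix an optimal pair $(\nu,\varphi_0)$ at $q_\star=\qopt(s)$. By Lemma~\ref{lem:stars}, $\nu$ satisfies every cycle inequality $\bar c_\gamma(\nu)\ge0$.

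\emph{Step 2 (separators above the optimum).} Take a sequence $q_k\downarrow q_\star$ with $q_k\in(q_\star,1)$. Since $q_k>q_{\mathrm{quad}}(s)$ and $q_k\notin\mathcal Q(s)$, alternative~1 of Lemma~\ref{lem:strict-alternative} fails at $q_k$; this uses the measure-formulation equivalence stated after that lemma. Alternative~2 therefore gives
\[
 g_k(\theta)=\sum_j w^{(k)}_j c^{(q_k)}_{\gamma^{(k)}_j}(\theta)<0
 \qquad\text{on }[0,\pi].
\]
Each $g_k$ is a trigonometric polynomial in $\theta$: every $c_{ij}$ is a combination of $\cos((i-j)\theta)$-type terms with coefficients depending on $q$ (cf.~\eqref{eq:disk-cycle-polynomial}).

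The degrees of the $g_k$ are unbounded, so I would not normalize coefficients. Instead I would exploit the geometric structure. Every cycle cost can be written as $\operatorname{Re}\sum_{n\in\integers}a_n e^{in\theta}$, and the edge costs satisfy $|c_{ij}|\lesssim q^{\min(i,j)}$ with frequency $|i-j|$. A cycle inequality can be telescoped into edges, and by shift-invariance plus the star reduction of Lemma~\ref{lem:stars} it is enough to use the star cycles $(0,t,0)$-type combinations. The estimate to aim for is that, after normalizing $\sup|g_k|=1$, the functions $g_k$ lie in a fixed class of functions analytic in a strip $|\operatorname{Im}\theta|<\delta$ with uniform bounds. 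These are the ``explicit bounds on the derivatives'' mentioned in the roadmap: Fourier coefficients decaying like $q^{t/2}$-weighted sums. A Montel/Arzel\`a--Ascoli argument then gives a uniform limit $g\le0$ with $\sup|g|=1$, so $g\not\equiv0$, and $g$ is analytic on a neighborhood of $[0,\pi]$.

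\emph{Step 3 (the support lies on the zero set).} Continuity in $q$ of each fixed cycle cost gives
\[
 \int g\,d\nu=\lim_k\int g_k\,d\nu\ge \lim_k \text{(approximating }\textstyle\sum w_j\bar c^{(q_\star)}_{\gamma_j}(\nu)\text{)}\ge0.
\]
Some care is needed here, because $\nu$ is feasible at $q_\star$ rather than at $q_k$. I would handle this by writing $g_k=g_k^{(q_\star)}+(\text{error})$ with the error tending to zero uniformly, using the same uniform analytic bounds. Since $g\le0$, this forces $g=0$ $\nu$-a.e., so $\operatorname{supp}\nu\subset\{g=0\}$. A nonzero analytic function has finitely many zeros on the compact set $[0,\pi]$; the paper phrases the same conclusion through Taylor expansions and Rolle's theorem. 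Hence $\nu_\star:=\nu$ is finitely supported, and $\varphi_0^\star=\varphi_0$.

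\emph{Main obstacle.} The hard step is the uniform control in Step 2. The separators involve cycles of unbounded length and index, so I must show that the normalized limits neither degenerate to zero nor lose regularity. This rests on the decay $q^{\min(i,j)}$ of the edge costs, obtained by reducing each cycle's contribution to star edges via shift scaling. The hypothesis $\qopt>q_{\mathrm{quad}}$ is needed so that $g$ is not forced to vanish identically; at $q_{\mathrm{quad}}$ the point $\theta=0$ gives $d_s=0$ and all costs vanish there.
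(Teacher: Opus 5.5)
Your architecture matches the paper's: take a limit of strict separators as $q\downarrow\qopt(s)$, show the limit is analytic, put the support of $\nu$ on its zero set, and conclude there are finitely many zeros. But Step~3 has a genuine gap, exactly at the point you flag.

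Continuity in $q$ of each \emph{fixed} cycle cost does not help, because the cycles change with $k$ and have unbounded indices. If $g_k^{(q_\star)}$ means the same cycles with the same weights evaluated at $q_\star$, the normalized error $g_k-g_k^{(q_\star)}$ need not tend to zero. An edge $(i,j)$ enters with coefficient $q^i-q^j$. So a piece of the separator sitting at base index $N_k$ has its relative weight multiplied by roughly $(q_\star/q_k)^{N_k}$, which tends to $0$ whenever $N_k(q_k-q_\star)\to\infty$. Examples are a part written as $q_k^{-N_k}$ times a cycle shifted by $N_k$, or inner loops $N\to N+1\to N$ of a long cycle. Bounds that are analytic in $\theta$ say nothing about this $q$-dependence of the combinatorial coefficients.

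The missing idea is the paper's canonical star decomposition. Put $u_t=c_{0t}/(1-q^t)$ and $v_t=c_{t0}/(1-q^t)$. Every edge is then $(q^i-q^j)u_{j-i}$ or $(q^j-q^i)v_{i-j}$. Telescoping $\sum_k(q^{i_k}-q^{i_{k+1}})=0$ shows that the total coefficient on the $u_t$ equals the total on the $v_t$. Normalize both totals to one, freeze the resulting numbers $a_{t,n},b_{t,n}$, and set $w_n(q,\theta)=\sum_t a_{t,n}u_t(q,\theta)+\sum_t b_{t,n}v_t(q,\theta)$. Then $w_n(q_n,\cdot)<0$. At $q_\star$, the bounds $\int u_t\,d\nu\geq\varphi_0$ and $\int v_t\,d\nu\geq-\varphi_0$ from \eqref{eq:star-feasible} give $\int w_n(q_\star,\cdot)\,d\nu\geq\varphi_0-\varphi_0=0$. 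The frozen combination is not a cycle cost at $q_\star$; the cancellation of $\varphi_0$, made possible by the equal totals, is what replaces the cycle inequalities. For the same reason, your remark that $(0,t,0)$ cycles suffice is incorrect. They only give $\bar c_{0t}+\bar c_{t0}\geq0$, whereas feasibility requires $\bar c_{0t}/(1-q^t)+\bar c_{t'0}/(1-q^{t'})\geq0$ for all pairs $t,t'$.

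The same normalization supplies what Step~2 leaves as an ``estimate to aim for''. Write $w_n=d+\sum_t a_{t,n}(u_t-d)+\sum_t b_{t,n}v_t$ with $d=\operatorname{Re}d_s$. For $q\in[q_\star,q_+]$ with $q_+<1$, the functions $u_t-d$ and $v_t$ have cosine coefficients $O(q_+^{t/2})$ at frequencies at most $t+2$. This gives bounds on all $\theta$-derivatives, or on a complex strip, that are uniform in $n$. It also gives $w_n(q_n,\cdot)-w_n(q_\star,\cdot)\to0$ uniformly, which is what Step~3 actually needs.

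Your normalization $\sup|g_k|=1$ inherits these bounds only if $\sup_{[0,\pi]}|w_n|$ is bounded below uniformly in $n$. Without that, dividing by the sup can inflate the high-frequency tails. Note also that the edge bound $|c_{ij}|\lesssim q^{\min(i,j)}$ gives no decay for lag-$t$ edges at base index $0$. The paper gets the lower bound from $u_t(q,0)=d_s(\sqrt q)/(1+q^{t/2})$ and $v_t(q,0)=-d_s(\sqrt q)q^{t/2}/(1+q^{t/2})$. Together with the unit totals these yield $w_n(q,0)\leq d_s(\sqrt q)(1-\sqrt q)/(1+\sqrt q)<0$. This is where $\qopt(s)>q_{\mathrm{quad}}(s)$ enters, and it directly gives a nonzero limit, so no sup-normalization is needed. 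With these two ingredients in place, your Montel and identity-theorem ending is a valid substitute for the paper's Taylor--Rolle argument.
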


\begin{proof}
Write $q_\star=\qopt(s)$ and choose an optimal pair $(\nu_\star,\varphi_0^\star)$.
We construct a function that vanishes on the support of $\nu_\star$ and has only finitely many zeros.
Since $q$ varies in this proof, we write $c_{ij}(q,\theta)$ for the term in \eqref{eq:edge-cost}, evaluated at $z=\sqrt q\,e^{i\theta}$ with $s$ fixed.
For $t\geq1$, put
\[
 u_t(q,\theta)=\frac{c_{0t}(q,\theta)}{1-q^t},
 \qquad v_t(q,\theta)=\frac{c_{t0}(q,\theta)}{1-q^t},
 \qquad d(q,\theta)=\operatorname{Re}d_s(\sqrt q\,e^{i\theta}).
\]
Here $u_t$ and $v_t$ compare iterations $0$ and $t$ in the two possible orders.
At $q_\star$, the interpolation inequalities give
$\int u_t\,d\nu_\star\geq\varphi_0^\star$ and
$\int v_t\,d\nu_\star\geq-\varphi_0^\star$.

Choose $q_\star<q_+<1$ and $q_n\downarrow q_\star$ with $q_\star<q_n\leq q_+$.
By Lemma~\ref{lem:strict-alternative} and the compactness argument above, there is a strict cycle separator at every $q_n$.
The interpolation term $c_{ij}(q,\theta)$ in \eqref{eq:edge-cost} compares iterations $i$ and $j$ and satisfies
\[
 c_{ij}(q,\theta)=
 \begin{cases}
  (q^i-q^j)u_{j-i}(q,\theta), & i<j,\\
  (q^j-q^i)v_{i-j}(q,\theta), & i>j,\\
  0, & i=j.
 \end{cases}
\]
For a cycle $\gamma=(i_0,\ldots,i_m=i_0)$, the differences telescope to $\sum_{k=0}^{m-1}(q^{i_k}-q^{i_{k+1}})=q^{i_0}-q^{i_m}=0$.
Since $0<q<1$, the positive differences are the coefficients of the $u_t$ terms, and the absolute values of the negative differences are the coefficients of the $v_t$ terms.
Their totals therefore agree for each cycle, and hence for every nonnegative weighted sum of cycles.
For each $n$, choose one strict separator at $q_n$ and expand its cycle costs using the displayed formula for $c_{ij}$.
Collect the coefficients of each $u_t(q_n,\theta)$ and $v_t(q_n,\theta)$.
Their common total is positive, since a zero total would make the separator identically zero.
Define $a_{t,n}$ and $b_{t,n}$ as these respective coefficients divided by their common total.
They are nonnegative, only finitely many are nonzero for each $n$, and
\[
 \sum_{t\geq1}a_{t,n}=\sum_{t\geq1}b_{t,n}=1.
\]
These coefficients depend on the separator chosen at $q_n$ and remain fixed as $q$ varies.
For $q\in[q_\star,q_+]$ and $\theta\in[0,\pi]$, define
\begin{equation}
\label{eq:lag-separator-limit}
 w_n(q,\theta)
 =\sum_{t\geq1}a_{t,n}u_t(q,\theta)
 +\sum_{t\geq1}b_{t,n}v_t(q,\theta).
\end{equation}
At $q=q_n$, this is the chosen separator divided by its positive normalizing constant, so $w_n(q_n,\theta)<0$ on $[0,\pi]$.
At $q=q_\star$, the interpolation inequalities and the two coefficient sums give $\int w_n(q_\star,\theta)\,d\nu_\star\geq\varphi_0^\star-\varphi_0^\star=0$.
Since $\sum_t a_{t,n}=1$, we can also write
\[
 w_n(q,\theta)=d(q,\theta)
 +\sum_{t\geq1}a_{t,n}\bigl(u_t(q,\theta)-d(q,\theta)\bigr)
 +\sum_{t\geq1}b_{t,n}v_t(q,\theta).
\]

We next bound $u_t-d$ and $v_t$ to justify taking a limit in $w_n$.
We also bound their derivatives with respect to $\theta$ to control the Taylor remainder of the limiting function later in the proof.
Throughout this calculation, $q$ is held fixed when differentiating.
With $z=\sqrt q\,e^{i\theta}$, the definitions and \eqref{eq:edge-cost} give
\[
 \begin{aligned}
 u_t(q,\theta)-d(q,\theta)
 &=\frac{q^t d(q,\theta)-\operatorname{Re}[\overline{d_s(z)}\bar z^t]}{1-q^t},\\
 v_t(q,\theta)
 &=\frac{q^t d(q,\theta)-\operatorname{Re}[\overline{d_s(z)}z^t]}{1-q^t}.
 \end{aligned}
\]
To identify the cosine terms in these expressions, write \eqref{eq:nag-transfer} as
\[
 d_s(z)=-(s^2-1)\left(1-\frac{2(1-1/s)}{z}
 +\frac{(1-1/s)^2}{z^2}\right).
\]
Since $\operatorname{Re}(e^{ik\theta})=\cos(k\theta)$ and $\cos(-k\theta)=\cos(k\theta)$, both $u_t-d$ and $v_t$ are finite sums of $\cos(k\theta)$ with integer indices $0\leq k\leq t+2$.
For any finite sum $h(\theta)=\sum_k p_k\cos(k\theta)$, the triangle inequality gives $|h(\theta)|\leq\sum_k|p_k|$ at every angle.
The combined sum of the absolute values of the cosine coefficients of $u_t-d$ and $v_t$ is at most
\[
 \frac{2(s^2-1)(q^t+q^{t/2})}{1-q^t}
 \left(1+\frac{1-1/s}{\sqrt q}\right)^2
 \leq c r_+^t,
 \qquad q\in[q_\star,q_+],
\]
where
\[
 r_+=\sqrt{q_+}<1,\qquad
 c=\frac{4(s^2-1)}{1-q_+}
 \left(1+\frac{1-1/s}{\sqrt{q_\star}}\right)^2.
\]
Each differentiation multiplies the absolute value of a sine or cosine term's coefficient by its index $k$.
After $j$ derivatives, the absolute value of each coefficient is therefore multiplied by at most $(t+2)^j$.
Applying the same triangle inequality gives
\begin{equation}
\label{eq:lag-geometric-tails}
 \left|\frac{d^j}{d\theta^j}(u_t-d)\right|
 +\left|\frac{d^j}{d\theta^j}v_t\right|
 \leq c r_+^t(t+2)^j,
 \qquad j\geq0,\quad\theta\in\reals.
\end{equation}
For $j=0$, this bounds the functions themselves and shows that $u_t-d$ and $v_t$ tend to zero uniformly in $q$ and $\theta$ on $[q_\star,q_+]\times[0,\pi]$.
The function $d$ is a linear combination of $1$, $\cos\theta$, and $\cos(2\theta)$, and satisfies $|\frac{d^j}{d\theta^j}d|\leq c2^j$.

The separator coefficients $a_{t,n}$ and $b_{t,n}$ lie in $[0,1]$.
Choose nested subsequences along which the coefficients converge for $t=1$, then for $t=2$, and so on.
Taking the $n$th element of the $n$th subsequence gives a single subsequence with $a_{t,n}\to a_t$ and $b_{t,n}\to b_t$ for every fixed $t$.
At the fixed factor $q_\star$, define the function of the angle
\[
 w_\star(\theta)=d(q_\star,\theta)
 +\sum_{t\geq1}a_t\bigl(u_t(q_\star,\theta)-d(q_\star,\theta)\bigr)
 +\sum_{t\geq1}b_tv_t(q_\star,\theta).
\]
Along the chosen subsequence, both $w_n(q_n,\theta)$ and $w_n(q_\star,\theta)$ converge to $w_\star(\theta)$ uniformly for $\theta\in[0,\pi]$.
Indeed, the bound with $j=0$ makes the sums over sufficiently large values of $t$ uniformly small, and each finite sum converges by coefficient convergence and continuity in $q$.
The limiting coefficient sums may be less than one because weights can move to larger and larger values of $t$ as $n$ increases.
The term $d$ outside the sums preserves their limiting contribution, since $u_t$ tends to $d$ and $v_t$ tends to zero as $t$ increases.
Passing to these limits in $w_n(q_n,\theta)<0$ and $\int w_n(q_\star,\theta)\,d\nu_\star\geq0$ gives
\[
 w_\star(\theta)\leq0\quad(0\leq\theta\leq\pi),
 \qquad \int w_\star(\theta)\,d\nu_\star\geq0.
\]
Since $w_\star$ is continuous and nonpositive, its nonnegative integral forces it to vanish on the support of $\nu_\star$.

At angle zero, writing $r=\sqrt q$ gives
\[
 u_t(q,0)=\frac{d_s(r)}{1+r^t},
 \qquad v_t(q,0)=-\frac{d_s(r)r^t}{1+r^t}.
\]
Since $r^t\leq r<1$ and $d_s(r)<0$, the two coefficient sums imply
$w_n(q,0)\leq d_s(r)(1-r)/(1+r)$.
Consequently,
\[
 w_\star(0)\leq d_s(\sqrt{q_\star})
 \frac{1-\sqrt{q_\star}}{1+\sqrt{q_\star}}<0.
\]

It remains to show that $w_\star$ has only finitely many zeros in $[0,\pi]$.
The same series defines $w_\star$ for every real $\theta$.
The derivative bound \eqref{eq:lag-geometric-tails} permits termwise differentiation of every order, uniformly on the real line.
Choose $b>0$ with $r_+e^b<1$.
Using $(t+2)^j\leq j!b^{-j}e^{b(t+2)}$ and summing the geometric series gives
\[
 \sup_{\theta\in\reals}|w_\star^{(j)}(\theta)|
 \leq c_0j!b^{-j},
 \qquad c_0=\frac{ce^{2b}}{1-r_+e^b}.
\]
For any real expansion point $\theta_0$, Taylor's theorem therefore gives
\[
 \left|w_\star(\theta)
 -\sum_{j=0}^m\frac{w_\star^{(j)}(\theta_0)}{j!}(\theta-\theta_0)^j\right|
 \leq c_0\left(\frac{|\theta-\theta_0|}{b}\right)^{m+1}.
\]
When $|\theta-\theta_0|<b$, the right-hand side tends to zero as $m\to\infty$, so the Taylor series equals $w_\star$ on this neighborhood.
The same radius $b$ works at every real expansion point.

Suppose that $w_\star$ has infinitely many distinct zeros in $[0,\pi]$.
By compactness, we can choose a strictly monotone sequence of these zeros converging to some $\theta_0\in[0,\pi]$, possibly an endpoint.
Between consecutive zeros, Rolle's theorem gives a zero of $w_\star'$.
These zeros also converge to $\theta_0$, and the same argument applies successively to every derivative.
By continuity, $w_\star^{(j)}(\theta_0)=0$ for every $j\geq0$.
The Taylor expansion then gives $w_\star(\theta)=0$ whenever $|\theta-\theta_0|<b$.
At every point inside this interval all derivatives vanish, so a Taylor expansion there gives another interval of radius $b$ on which $w_\star$ is zero.
Repeating with expansion points spaced $b/2$ apart toward zero eventually gives $w_\star(0)=0$, contradicting the strict inequality above.
Thus $w_\star$ has only finitely many zeros in $[0,\pi]$, and the support of $\nu_\star$ is finite.
\end{proof}

\subsection{Finite-dimensional realization with the standard initialization}

An optimal angular measure with finite support gives finitely many planar components, each rotating and contracting by $\sqrt{\qopt(s)}$ at every iteration.
We must still enforce the standard initialization $x_{-1}=x_0$.
We scale the prescribed trajectory, add one scalar coordinate, and choose an initial point and gradient consistent with the NAG updates.
The additional coordinate supplies slack in the interpolation inequalities involving the initial query point.
Its iterate is zero from iteration two onward, so this coordinate does not affect the asymptotic factor.

\begin{lemma}[Enforcing the standard initialization]
\label{lem:initialization-splice}
Fix $s>1$ with $\qopt(s)>q_{\mathrm{quad}}(s)$, and set $\kappa=s^2$.
Consider any trajectory associated with a finitely supported optimal angular pair as in Section~\ref{sec:variational}.
After scaling this trajectory by a positive constant and adding one coordinate, it can be realized from iteration one onward by Nesterov's method on a single finite-dimensional objective in $\Fclass{\kappa}$.
The method starts with $x_{-1}=x_0$, and its squared-distance asymptotic factor is $\qopt(s)$.
\end{lemma}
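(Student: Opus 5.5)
The plan is to build the full data $(y_t,g_t)_{t\ge0}$ in $\reals^{2m}\times\reals$, with $m$ the number of support angles from Lemma~\ref{lem:finite-optimal-support}, and then invoke the exact interpolation theorem to obtain one $f\in\Fclass{\kappa}$. For $t\ge1$ the query points and transformed gradients are the scaled modal trajectory of Section~\ref{sec:variational} with $q=\qopt(s)$: $r_t=\lambda(\sqrt{w_k}z_k^{t-1})_k$, $\sigma_t=\lambda(\sqrt{w_k}d_s(z_k)z_k^{t-1})_k$, and the extra coordinate is zero. We then prepend the initialization. We choose $x_0$ (with extra coordinate $\epsilon>0$) so that $y_0=x_0$ and $x_1=y_0-\nabla f(y_0)/\kappa$ equals the prescribed $x_1=(\kappa-1)r_0/\kappa$. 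We also require that the momentum step $y_1=x_1+\beta_\kappa(x_1-x_0)$ reproduces the prescribed $y_1$. On the modal coordinates this last requirement is one linear condition that fixes $x_0$, namely $x_0=((1+\beta_\kappa)x_1-y_1)/\beta_\kappa$. With $x_0$ fixed, the gradient at $y_0$ is forced: $g_0=\kappa(x_0-x_1)$. In the extra coordinate we set $x_0=\epsilon$, take its gradient component $\kappa\epsilon$ so that $x_1$ has a zero extra coordinate, and then $y_1$ has extra coordinate $-\beta_\kappa\epsilon$. This breaks the zero pattern at $t=1$. So I would instead give the extra coordinate its own short prescribed path, $e_0=\epsilon$, $e_1=0$, $y_1^{e}=-\beta\epsilon$, with gradient $g_1^e$ chosen so that $x_2^e=y_1^e-g_1^e/\kappa=0$, i.e.\ $g_1^e=\kappa y_1^e$. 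From $t=2$ on the extra coordinate is then identically zero.

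Interpolability reduces to the pairwise inequalities \eqref{eq:ssc-interpolation}. These split into three groups: pairs among $t\ge2$ queries, pairs involving $y_1$ (perturbed by the extra coordinate), and pairs involving $y_0$. For the modal part we have the optimal pair $(\nu_\star,\varphi_0^\star)$ and Lemma~\ref{lem:stars}, which give function values $\varphi_i=q^i\varphi_0^\star$ satisfying all modal inequalities, including those with the minimizer. The extra coordinate uses a strongly convex quadratic profile $\tfrac\kappa2 e^2$ at $y_0$ and $y_1$ and $\tfrac12e^2$ elsewhere. Each coordinate behaves like an admissible one-dimensional piece, so the sums need checking only where the coordinates interact through the $\|\cdot\|^2$ coupling terms.

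Precisely, I would use the $(\sigma,r,\varphi)$ variables, in which the conditions are plain cyclic monotonicity of $(\sigma_i,r_i)$, i.e.\ existence of potentials $\varphi_i$ with $\varphi_i-\varphi_j\le\langle r_i,\sigma_i-\sigma_j\rangle$ for all $i,j$. For indices $\ge1$ these hold once the extra-coordinate contributions are added, provided we choose $\varphi$ values for the new index that absorb them. For index $0$ we need $\varphi_0^{\rm new}$ with $\langle r_j,\sigma_j-\sigma_0\rangle\ge\varphi_j-\varphi_0^{\rm new}$ and $\langle r_0,\sigma_0-\sigma_j\rangle\ge\varphi_0^{\rm new}-\varphi_j$ for all $j$. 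This is a nonempty interval condition, $\sup_j(\varphi_j-\langle r_j,\sigma_j-\sigma_0\rangle)\le\inf_j(\varphi_j+\langle r_0,\sigma_0-\sigma_j\rangle)$, whose gap equals $\inf_{i,j}$ of a three-cycle sum. Making the new point's $\sigma_0$ component large in the extra coordinate, so $\langle r_0,\sigma_0\rangle$ contains a term of order $\epsilon^2\cdot(\text{large})$, while its cross terms with later points vanish because their extra coordinates are zero, gives positive slack. The two-step extra path at $y_1$ is handled the same way. The overall scale $\lambda$ relative to $\epsilon$ lets the slack dominate the bounded modal cross terms, which decay like $q^{j/2}$.

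The main obstacle is this last step. The initialization data are forced by the NAG update, so we cannot choose them freely. The extra coordinate must simultaneously make the forced $(y_0,g_0)$ and $(y_1,g_1)$ interpolable with the whole infinite tail and keep the iterates exact. I would first try letting the extra coordinate of the transformed gradient at index $0$ have the sign making $\langle r_0,\sigma_0\rangle$ large. I would then verify by explicit computation that the one-dimensional piece $(e_0,e_1,e_2{=}0,\dots)$ is itself realizable by a function with curvature in $[1,\kappa]$ (for instance piecewise quadratic), so that coordinatewise sums remain cyclically monotone up to the controllable cross terms. Finally, $R_t=\|x_t\|^2/\|x_0\|^2$ equals a constant times $q^{t}$ for $t\ge2$, so the $\limsup$ of the $t$th root is $\qopt(s)$.
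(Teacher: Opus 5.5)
Your construction breaks down at the step you yourself flag as the main obstacle: the extra coordinate you build supplies no slack at all.

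Requiring $x_1^e=0$ forces $g_0^e=\kappa\epsilon$, and requiring $x_2^e=0$ forces $g_1^e=\kappa y_1^e$. So every extra-coordinate datum, including the optimum, lies on $\kappa v^2/2$, which is the upper curvature boundary of $\Fclass{\kappa}$. In the transformed variables, $r_t=\kappa x_{t+1}/(\kappa-1)$ gives $r_t^e=0$ for every $t\geq0$. The extra coordinate therefore contributes exactly zero to every term $\langle r_i,\sigma_i-\sigma_j\rangle$. The claimed $\epsilon^2\cdot(\text{large})$ term in $\langle r_0,\sigma_0\rangle$ is identically zero, and you cannot enlarge $\sigma_0^e=(\kappa-1)\epsilon$ because the NAG step fixes it.

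Equivalently, in \eqref{eq:ssc-interpolation} every pair of points on $\kappa v^2/2$ gives an active inequality. Adding the two inequalities for a pair eliminates the function values, so no choice of values can make both orderings strict.

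Cyclic monotonicity of your combined data is thus equivalent to that of the modal data alone, with the forced point $\widehat y_0=\widehat x_0$, $\widehat g_0=\kappa(\widehat x_0-\widehat x_1)$. Nothing guarantees that this point's residuals against the tail and the optimum are nonpositive. The scale $\lambda$ cannot fix this, since it multiplies all modal residuals by $\lambda^2$ without changing their signs. There are also no coupling terms to control: inner products and squared norms add across orthogonal coordinates, so residuals add exactly.

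The missing idea is to give the extra coordinate a curvature strictly between $1$ and $\kappa$ at the initial point, accepting $x_1^e\neq0$. The paper uses the scalar objective $p(v)=av^2/2$ for $v\geq0$ and $p(v)=\kappa v^2/2$ for $v\leq0$, with $a=\kappa/(1+\beta_\kappa)=s(s+1)/2\in(1,\kappa)$. From $u_{-1}=u_0=1$, NAG gives $u_1=\beta_\kappa/(1+\beta_\kappa)$ and query $v_1=0$ exactly. It then gives $u_t=0$ for all $t\geq2$, with $v_2=-\beta_\kappa^2/(1+\beta_\kappa)$ and $v_t=0$ for $t\geq3$.

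All later queries and the optimum lie on the branch $\kappa v^2/2$, so their mutual residuals vanish. Each pair $(0,t)$ or $(t,0)$ has residual at most $-c$, with $c=(a-1)(\kappa-a)/(2(\kappa-1))>0$; this is exactly the quantity that vanishes for your choice $a=\kappa$. Scale the modal part by $\varepsilon$ with $\varepsilon^2=c/(b+1)$, where $b<\infty$ bounds the positive parts of the modal residuals involving index $0$; then every inequality holds. The nonzero extra iterate at $t=1$ that you tried to avoid is harmless, since it vanishes from $t=2$ on and does not affect the asymptotic factor.
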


\begin{proof}
Write $\beta=\beta_\kappa\in(0,1)$ and set
\[
 a=\frac{\kappa}{1+\beta}=\frac{s(s+1)}2,
 \qquad
 p(v)=
 \begin{cases}
  a v^2/2,&v\geq0,\\
  \kappa v^2/2,&v\leq0.
 \end{cases}
\]
Since $1<a<\kappa$, this differentiable piecewise quadratic belongs to $\Fclass{\kappa}$.
Let $u_t$ and $v_t$ denote the scalar iterates and query points, respectively.
Starting NAG on $p$ from $u_{-1}=u_0=1$ gives
\[
 u_1=\frac{\beta}{1+\beta},\qquad u_t=0\quad(t\geq2).
\]
Indeed, the corresponding query points are
\[
 v_0=1,\qquad v_1=0,\qquad
 v_2=-\frac{\beta^2}{1+\beta},\qquad v_t=0\quad(t\geq3).
\]
The first gradient step gives $u_1$, the next extrapolation gives $v_1=0$, and every query $v\leq0$ is mapped to zero by the gradient step.

Write the interpolation condition \eqref{eq:ssc-interpolation} as $e_{ij}\leq0$, where
\[
 e_{ij}
 =f_j-f_i+\langle g_j,y_i-y_j\rangle
 +\frac{\|g_i-g_j\|^2}{2\kappa}
 +\frac{\kappa}{2(\kappa-1)}
 \left\|y_i-y_j-\frac{g_i-g_j}{\kappa}\right\|^2.
\]
Let $e_{ij}^p$ be this residual with $y_t=v_t$, $g_t=p'(v_t)$, and $f_t=p(v_t)$.
Every later query and the optimum satisfy $v_t\leq0$, with $v_\star=0$.
Substituting $(y_0,g_0,f_0)=(1,a,a/2)$ and $(y_t,g_t,f_t)=(v_t,\kappa v_t,\kappa v_t^2/2)$ gives
\[
 e^p_{0t}=-c+(\kappa-a)v_t\leq-c,
 \qquad e^p_{t0}=-c,
 \qquad
 c=\frac{(a-1)(\kappa-a)}{2(\kappa-1)}>0.
\]
Thus the inequalities for index pairs $(0,t)$ and $(t,0)$ have uniform slack $c$.
Pairs of later query points, including the optimum, have residual zero because $p(v)=\kappa v^2/2$ for $v\leq0$.

Use hats for all quantities associated with the prescribed trajectory, beginning at $t=1$.
The extrapolation equation requires
\[
 \widehat y_1
 =(1+\beta)\widehat x_1-\beta \widehat x_0,
\]
so set
\[
 \widehat x_0
 =\frac{(1+\beta)\widehat x_1-\widehat y_1}{\beta}.
\]
Set $\widehat x_{-1}=\widehat y_0=\widehat x_0$ and prescribe
\[
 \widehat g_0
 =\kappa(\widehat x_0-\widehat x_1).
\]
Then the first gradient step reaches $\widehat x_1$, and the next extrapolation gives $\widehat y_1$.
The NAG updates do not depend on $\widehat f_0$, so assign it any finite value.
Scaling the prescribed trajectory and adding the scalar coordinate will enforce the interpolation inequalities involving index $0$.

At the optimum, prescribe
\[
 (\widehat y_\star,\widehat g_\star,\widehat f_\star)
 =(0,0,0),
\]
which is compatible with the prescribed trajectory by the proof of Lemma~\ref{lem:closed-rate-problem}.
The prescribed points and gradients are bounded because $\qopt(s)<1$.
The interpolation inequalities with the optimum imply
\[
 0\leq \widehat f_t
 \leq\langle \widehat g_t,\widehat y_t\rangle
 \leq\|\widehat g_t\|\|\widehat y_t\|.
\]
The prescribed function values are therefore bounded as well.
Since $\widehat y_0$, $\widehat g_0$, and $\widehat f_0$ are fixed, the interpolation residuals involving index $0$ are uniformly bounded:
\[
 b=\sup_{t\in\{\star,1,2,\ldots\}}
 \max\{0,\widehat e_{0t},\widehat e_{t0}\}<\infty.
\]

For $\varepsilon>0$, define the combined iterates, query points, gradients, and function values by
\[
 x_t=(u_t,\varepsilon \widehat x_t),
 \qquad
 y_t=(v_t,\varepsilon \widehat y_t),
\]
\[
 g_t=(p'(v_t),\varepsilon \widehat g_t),
 \qquad
 f_t=p(v_t)+\varepsilon^2\widehat f_t.
\]
At the optimum, set $(y_\star,g_\star,f_\star)=(0,0,0)$.
Inner products and squared norms add across coordinates, so
\[
 e_{ij}=e_{ij}^p+\varepsilon^2\widehat e_{ij}.
\]
For pairs with neither index equal to $0$, we have $\widehat e_{ij}\leq0$, and therefore $e_{ij}\leq0$.
Choose
\[
 \varepsilon^2=\frac{c}{b+1}.
\]
For every $t\in\{\star,1,2,\ldots\}$, this gives
\[
 \max\{e_{0t},e_{t0}\}
 \leq-c+\varepsilon^2b
 =-\frac{c}{b+1}<0.
\]
Together with $e_{00}=0$, this proves all the interpolation inequalities.

Summing the transformed interpolation inequalities around any finite cycle proves cyclic monotonicity of the entire countable set of pairs $(\sigma_i,r_i)$.
The construction in Section~\ref{sec:interpolation} therefore gives one function $f\in\Fclass{\kappa}$ with $\nabla f(y_i)=g_i$ at every prescribed query point, including the optimum.
This applies to the whole set at once and uses the same finite-dimensional space as the combined trajectory.

The constructed sequences satisfy the NAG updates \eqref{eq:nag} because $\nabla f(y_t)=g_t$.
Thus $(x_t)$ is the trajectory of Nesterov's method on $f$, with $x_{-1}=x_0$.
Interpolation at the origin gives $\nabla f(0)=0$, so $x_\star=0$.
The initial point is nonoptimal because its first coordinate is $u_0=1$.

Let $\theta_k$ be a support angle of the optimal measure and put $z_k=\sqrt{\qopt(s)}\,e^{i\theta_k}$.
The construction in Section~\ref{sec:rotating-trajectories} makes the corresponding component of $r_t$ proportional to $z_k^t$.
Since $x_{t+1}=(\kappa-1)r_t/\kappa$, the corresponding prescribed iterate, written as a complex number $\widehat x_{t,k}$, satisfies
\[
 \widehat x_{t,k}=z_k^{t-1}\widehat x_{1,k},
 \qquad
 |\widehat x_{t,k}|^2=\qopt(s)^{t-1}|\widehat x_{1,k}|^2,
 \qquad t\geq1.
\]
All $z_k$ have modulus $\sqrt{\qopt(s)}$, so the norm of each prescribed planar component contracts by this same factor.
The components occupy separate coordinates, so their squared norms add.
Including the additional scalar coordinate gives, for $t\geq1$,
\[
 \|x_t\|^2
 =u_t^2+\varepsilon^2\|\widehat x_t\|^2
 =u_t^2+\varepsilon^2\qopt(s)^{t-1}\|\widehat x_1\|^2.
\]
The vector $\widehat x_1$ is nonzero, and $u_t=0$ for every $t\geq2$.
Thus the additional coordinate contributes nothing to the squared distance from iteration two onward.
The positive scaling and the initial-distance normalization contribute only fixed factors, whose $t$th roots tend to one.
Taking $t$th roots therefore yields
\[
 \lim_{t\to\infty}
 \left(\frac{\|x_t-x_\star\|^2}{\|x_0-x_\star\|^2}\right)^{1/t}
 =\qopt(s).
\]
\end{proof}

\subsection{Uniform upper bound from a strict separator}
\label{app:uniform-upper-bound}

We combine the cycle inequalities with the NAG recurrence to bound the sum of the squared rescaled iterates.
We first derive the bound assuming that this sum is finite.
A finite induction then proves that every trajectory satisfies this assumption.

\begin{proposition}[Uniform rate bound from a strict cycle separator]
\label{prop:strict-separator-rate}
Fix $s>1$ and $q>q_{\mathrm{quad}}(s)$.
Suppose a nonnegative linear combination of finitely many cycle costs is strictly negative at every angle $\theta\in[0,\pi]$.
Then there is a finite $b(s,q)$ such that every NAG trajectory \eqref{eq:nag} with $\kappa=s^2$, on any finite-dimensional objective $f\in\Fclass{s^2}$ and with any initialization $x_{-1}=x_0$, satisfies
\[
 \|x_t-x_\star\|^2\leq b(s,q)q^t\|x_0-x_\star\|^2,
 \qquad t\geq0.
\]
\end{proposition}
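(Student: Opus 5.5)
Translate the minimizer to the origin and pass to the transformed variables of \eqref{eq:interpolation-variables}: $\sigma_t=\nabla f(y_t)-y_t$ and $r_t=y_t-\sigma_t/(\kappa-1)$, so that $x_{t+1}=(\kappa-1)r_t/\kappa$. Then $(\sigma_t,r_t)$ lies in the subdifferential of the convex function $\varphi$ in \eqref{eq:phi}. Every cycle inequality $\sum_j\langle r_{n_j},\sigma_{n_j}-\sigma_{n_{j+1}}\rangle\geq0$ therefore holds along the actual trajectory, for every finite cycle and every shift $\gamma+t$.

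The plan is to rescale by $\rho=q^{-1/2}$. Set $\tilde r_t=\rho^t r_t$ and $\tilde\sigma_t=\rho^t\sigma_t$. The NAG recurrence becomes a linear time-invariant relation between $\tilde r$ and $\tilde\sigma$ whose transfer function, evaluated on the circle $|z|=\sqrt q$, is exactly $d_s(z)$ from \eqref{eq:nag-transfer}. Summing the shifted separator inequalities over $t$ with weights $q^{-t}$ then gives a quadratic form in the rescaled sequences. Its symbol on the unit circle is, up to the positive factor in \eqref{eq:disk-cycle-cost}, the separator $\sum_j w_j c_{\gamma_j}(\theta)$. By hypothesis and continuity this symbol is at most $-\delta<0$ at every angle.

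This is the key identity mentioned in the sketch. For finite horizons $T$, take $N\geq T$ larger than all cycle lengths and represent finite sequences by their values on the $N$-th roots of unity. A finite sum of integer powers $\sum_t e^{2\pi i kt/N}$ vanishes unless $k\equiv0$. This turns the summed cycle inequalities into an average of the symbol over equally spaced points, weighted by $|\hat{\tilde r}|^2$ (using $\tilde\sigma=d_s\tilde r$ modulo boundary terms). The result is
\[
0\leq -\delta\sum_{t\leq T}\|\tilde r_t\|^2+B_T,
\]
where $B_T$ collects the truncation boundary terms near $t=0$ and $t=T$.

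The terms at $t=0$ involve $x_0$ and the first few iterates. These are bounded by $C(s)\|x_0\|^2$ using the Lipschitz gradient, since $\|x_t\|\leq(2+\ldots)^t\|x_0\|$ for finitely many $t$.

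The main obstacle is the terms at the end $t=T$. They carry the large weights $q^{-T}$ and cannot simply be dropped. I would handle them by a finite induction. First show $\sum_t\|\tilde r_t\|^2<\infty$: the cycle inequalities involving the optimum ($0\leq\varphi_t\leq\langle r_t,\sigma_t\rangle$) together with the classical bound $q'=1-1/s$ give geometric decay. When $q>1-1/s$ this makes the rescaled sum converge immediately. Otherwise, iterate downward through a finite chain $q_0=1-1/s>q_1>\dots>q$, each step using the previous finiteness to kill the tail boundary terms as $T\to\infty$. Once the sum is finite, letting $T\to\infty$ removes $B_T$'s tail part and yields $\sum_t q^{-t}\|r_t\|^2\leq C(s,q)\|x_0\|^2$.

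Finally, $\|x_{t+1}\|^2\leq\|r_t\|^2$, so each term is bounded by the total sum. This gives $\|x_t\|^2\leq b(s,q)q^t\|x_0\|^2$ with $b$ depending only on $s$, $q$, and the fixed separator.
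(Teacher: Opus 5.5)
The gap is in how you handle the end-of-horizon terms, and the chain in $q$ does not close it. Your frequency-domain step itself is fine. It differs from the paper only in direction: you use the finite relation $\tilde\sigma=D\tilde r$ with symbol $d_s(\sqrt q\,e^{i\theta})$ and get coercivity in $\tilde r$. The paper inverts this to the causal infinite relation $\bar y=e+g(\bar\sigma)$ and gets coercivity in $\bar\sigma$. Your start-of-horizon terms are indeed controlled by finitely many early iterates.

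The chain $1-1/s=q_0>q_1>\dots>q$ fails for two reasons.
\begin{enumerate}
\item The hypothesis gives a strict separator only at the single rate $q$. By continuity it stays strictly negative for rates near $q$, but nothing gives a coercive inequality at the intermediate rates $q_k$. So there is no estimate to run at those levels.
\item Even with separators at every $q_k$, finiteness of $\sum_tq_k^{-t}\|r_t\|^2$ does not make the boundary terms at the smaller rate $q_{k+1}$ vanish. Those terms have size $\sum_{T-W\leq t\leq T}q_{k+1}^{-t}\|r_t\|^2$, which is exactly the quantity you are trying to bound. The previous step only gives $o\bigl((q_k/q_{k+1})^T\bigr)$, which can diverge.
\end{enumerate}
A chain in the rate could only be repaired by a quantitative argument. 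The truncated inequality has the form $\delta S_T\leq C\|x_0\|^2+C'(S_T-S_{T-W})$ for the rescaled partial sums $S_T$. Hence $S_T$ either stays below a constant multiple of $\|x_0\|^2$ or grows geometrically at a rate fixed by the separator constants. Prior decay with $q_k/q_{k+1}$ below that rate excludes the growth case. Even then you need separators with uniform constants along the whole chain. The setting of Theorem~\ref{thm:main} supplies these for rates above $\qopt(s)$, after a compactness argument, but the proposition's hypothesis does not.

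The paper never moves the rate. It keeps $q$ and the separator fixed and deforms the objective, $f_\tau=\tfrac12\|y\|^2+\tau h$ with $\tau_j=j/m$. Every $f_\tau$ lies in $\Fclass{s^2}$, so the same cycle inequalities and the same separator apply for all $\tau$. At $\tau=0$ the rescaled sequence is just $e\in\ell_2^d$.

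To pass from $\tau_{j-1}$ to $\tau_j$, the paper truncates the rescaled gradient sequence at time $n$ and moves the extra term $\tfrac1mg(u^{(n)})$ into a modified input $e^{(n)}$. Because $g$ is causal, the first $n+1$ terms are unchanged. The $\ell_2$ estimate at $\tau_{j-1}$ then applies, and the Lipschitz bound absorbs the perturbation once $m>2c_yc_g(s^2-1)$. This bounds $\sum_{t\leq n}\|\bar y_t\|^2$ uniformly in $n$, which gives square summability at $\tau=1$. It uses neither the classical rate nor separators at any other value of $q$. This homotopy in the objective, rather than in the rate, is the idea your argument is missing.
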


\begin{proof}
For $q>q_{\mathrm{quad}}(s)$, the coefficient $d_s(z)$ is nonzero on $|z|=\sqrt q$.
Define its reciprocal
\begin{equation}
\label{eq:inverse-transfer}
 p_s(z)=\frac1{d_s(z)}
 =-\frac{s^2z^2}{(s^2-1)(sz-(s-1))^2}.
\end{equation}
Multiplying both vectors in \eqref{eq:nag-transfer} by $p_s(z)$ changes the normalization from $r_t=z^t$ to $\sigma_t=z^t$.
All inner products are multiplied by the same positive factor:
\begin{equation}
\label{eq:normalization-inner-product}
 \operatorname{Re}\bigl[p_s(z)z^i(\bar z^i-\bar z^j)\bigr]
 =|p_s(z)|^2c_{ij}(\theta).
\end{equation}
This change preserves the sign of every cycle separator.

Let $f$ be defined on $\reals^d$.
Translate the minimizer to zero and set $f(0)=0$.
Write
\[
 c=(s^2-1)^{-1},\qquad
 a=\frac{1-1/s}{\sqrt q}\in(0,1),\qquad
 h(y)=f(y)-\frac12\|y\|^2.
\]
The function $h$ is convex, has gradient Lipschitz constant $s^2-1$, and satisfies $\nabla h(0)=0$.
For every sequence, use the scaling $\bar u_t=q^{-t/2}u_t$.
The gradient bound becomes
\begin{equation}
\label{eq:scaled-uncertainty}
 \bigl\|q^{-t/2}\nabla h(q^{t/2}v)\bigr\|
 \leq(s^2-1)\|v\|,
 \qquad v\in\reals^d.
\end{equation}

For the NAG trajectory, set $\sigma_t=\nabla h(y_t)$.
Eliminating $x_t$ from the NAG equations gives, for $t\geq2$,
\[
 \bar y_t-2a\bar y_{t-1}+a^2\bar y_{t-2}
 =-2ca\bar\sigma_{t-1}+ca^2\bar\sigma_{t-2}.
\]
The standard initialization gives
\[
 \bar y_0=x_0,\qquad
 \bar y_1=a\left(1+\frac1{s+1}\right)x_0-2ca\bar\sigma_0.
\]
Expanding this recurrence yields, for $t\geq0$,
\[
 \bar y_t=e_t-c\sum_{j=1}^t(j+1)a^j\bar\sigma_{t-j},
 \qquad
 e_t=\left(1+\frac{t}{s+1}\right)a^t x_0.
\]
The sequence $e$ accounts for the initialization and solves the same recurrence with every $\sigma_t$ set to zero.

Let $\ell_2^d$ denote the space of square-summable $\reals^d$-valued sequences indexed by $\integers$, with norm
\[
 \|u\|_{\ell_2}^2=\sum_{t\in\integers}\|u_t\|^2.
\]
Sequences defined only for $t\geq0$ are set to zero for $t<0$.
To abbreviate the weighted sum in the recurrence, define the linear map
\[
 g(u)_t=-c\sum_{j=1}^{\infty}(j+1)a^j u_{t-j},
 \qquad u\in\ell_2^d.
\]
Shifting the indices of a sequence preserves its $\ell_2$ norm.
The triangle inequality and the identity $\sum_{j=1}^{\infty}(j+1)a^j=(1-a)^{-2}-1$ therefore give
\[
 \|g(u)\|_{\ell_2}\leq c_g\|u\|_{\ell_2},
 \qquad c_g=c\bigl((1-a)^{-2}-1\bigr).
\]
For a sequence that vanishes at negative indices, the sum defining $g(u)_t$ has only $t$ possibly nonzero terms when $t\geq0$, and $g(u)_t=0$ when $t<0$.
Thus the same formula defines $g(u)$ without assuming square summability.
We will prove a bound for every square-summable sequence $e$ that vanishes at negative indices.
Allowing arbitrary $e$ will let us apply the same bound to the modified sequence used in the final induction.
For any such sequence and any scalar $\tau\in[0,1]$, consider
\begin{equation}
\label{eq:rescaled-recursion}
 \begin{aligned}
 \bar y_t&=e_t+g(\bar\sigma)_t
 =e_t-c\sum_{j=1}^t(j+1)a^j\bar\sigma_{t-j},\\
 \bar\sigma_t&=\tau q^{-t/2}\nabla h(q^{t/2}\bar y_t),
 \qquad t\geq0.
 \end{aligned}
\end{equation}
The NAG trajectory corresponds to $\tau=1$ and the initialization sequence $e$ derived above.
These equations determine a unique sequence by induction on $t$.
Each value of $\bar y_t$ depends only on the previously determined gradients.

To express the cycle inequalities, write
\[
 f_\tau(y)=\frac12\|y\|^2+\tau h(y).
\]
As $\tau$ varies from zero to one, this function interpolates between the quadratic $\|y\|^2/2$ and $f$.
For every $\tau\in[0,1]$, this function belongs to $\Fclass{s^2}$ and has minimizer zero.
For arbitrary query points, write $\sigma_t=\tau\nabla h(y_t)$, consistently with \eqref{eq:rescaled-recursion} after rescaling.
Take $y_t=\sigma_t=0$ for $t<0$.
Applying \eqref{eq:interpolation-variables} to $f_\tau$ gives $\sigma=\sigma_t$ and $r=y_t-c\sigma_t$ at each query point $y_t$.
For each selected cycle $\gamma=(n_0,\ldots,n_\ell=n_0)$, cyclic monotonicity therefore gives
\begin{equation}
\label{eq:local-cycle-supply}
 a_{\gamma,t}
 =\sum_{j=0}^{\ell-1}
 \langle y_{t+n_j}-c\sigma_{t+n_j},
 \sigma_{t+n_j}-\sigma_{t+n_{j+1}}\rangle\geq0,
 \qquad t\in\integers.
\end{equation}
The sum vanishes for $t<-\max_j n_j$.
After rescaling,
\begin{equation}
\label{eq:scaled-local-supply}
 q^{-t}a_{\gamma,t}
 =\sum_{j=0}^{\ell-1}
 \left\langle q^{n_j/2}(\bar y_{t+n_j}-c\bar\sigma_{t+n_j}),
 q^{n_j/2}\bar\sigma_{t+n_j}
 -q^{n_{j+1}/2}\bar\sigma_{t+n_{j+1}}\right\rangle\geq0.
\end{equation}
As in alternative~2 of Lemma~\ref{lem:strict-alternative}, write $w_\gamma\geq0$ for the separator weights, with one weight for each selected cycle.
We collect terms according to the difference between their two iteration indices.
For example, replacing $t+n_j$ by $t$ pairs the first vector in \eqref{eq:scaled-local-supply} with $\bar\sigma_{t+n_{j+1}-n_j}$ in its second term.
The following finite sums record these index differences as powers of $z\neq0$:
\begin{equation}
\label{eq:cycle-laurent-multiplier}
 k_\gamma(z)=\sum_{j=0}^{\ell-1}
 \left(q^{n_j}-q^{(n_j+n_{j+1})/2}z^{n_{j+1}-n_j}\right),
 \qquad
 k(z)=\sum_\gamma w_\gamma k_\gamma(z)=\sum_j k_jz^j.
\end{equation}
Only finitely many real coefficients $k_j$ are nonzero.
In sums involving these coefficients, indices without stated limits range over $\integers$.
Set $c_k=\sum_j|k_j|$.
Since each shift preserves the norm, the triangle inequality gives
\[
 \left\|\left(\sum_j k_ju_{t+j}\right)_{t\in\integers}\right\|_{\ell_2}
 \leq c_k\|u\|_{\ell_2}.
\]
For $\bar y\in\ell_2^d$, the gradient bound \eqref{eq:scaled-uncertainty} gives $\bar\sigma\in\ell_2^d$.
Each sum of shifted inner products then converges absolutely by Cauchy--Schwarz.
Summing \eqref{eq:scaled-local-supply} and reindexing therefore gives
\begin{equation}
\label{eq:summed-cycle-supply}
 \sum_{t,j}k_j\langle\bar y_t-c\bar\sigma_t,\bar\sigma_{t+j}\rangle
 =\sum_\gamma w_\gamma\sum_{t\in\integers}q^{-t}a_{\gamma,t}\geq0.
\end{equation}

For $z=\sqrt q\,e^{i\theta}$, direct substitution gives
\begin{equation}
\label{eq:strict-cycle-fdi}
 \operatorname{Re}\!\left[\overline{p_s(z)}k(e^{i\theta})\right]
 =|p_s(z)|^2\sum_\gamma w_\gamma c_\gamma(\theta)
 \leq-c_0
\end{equation}
for some $c_0>0$.
Continuity supplies this margin on $[0,\pi]$, and real coefficients extend it to the full unit circle.
Since $p_s(\sqrt q\,e^{i\theta})=-c/(1-ae^{-i\theta})^2$, clearing the denominator gives
\[
 c\operatorname{Re}\bigl[k(e^{i\theta})(1-ae^{-i\theta})^2\bigr]
 \geq c_0|1-ae^{-i\theta}|^4,
 \qquad \theta\in\reals.
\]
We next turn this pointwise inequality into an inequality for sequences.
For any $u\in\ell_2^d$, define
\[
 v_t=u_t-2au_{t-1}+a^2u_{t-2}.
\]
We will show that
\[
 c\sum_{t,j}k_j\langle u_t,v_{t+j}\rangle
 \geq c_0\|v\|_{\ell_2}^2.
\]
First suppose that $u$ has finite support.
Multiplying the pointwise inequality by a squared norm shows that
\[
 \left\|\sum_tu_te^{-it\theta}\right\|^2
 \left(c\operatorname{Re}\bigl[k(e^{i\theta})(1-ae^{-i\theta})^2\bigr]
       -c_0|1-ae^{-i\theta}|^4\right)
\]
is nonnegative for every real $\theta$.
After expansion, this expression is a finite sum of terms proportional to $e^{ij\theta}$ with integer indices $j$.
Choose a positive integer $p$ larger than every $|j|$ occurring in this sum.
The finite geometric-sum identity
\[
 \frac1p\sum_{r=0}^{p-1}\exp(2\pi i r j/p)
 =\begin{cases}1,&j=0,\\0,&j\ne0,\end{cases}
 \qquad j\in\integers,\quad |j|<p,
\]
shows that averaging at $\theta=2\pi r/p$, for $r=0,\ldots,p-1$, keeps only the constant coefficient.
To identify that coefficient, note that $\sum_tv_te^{-it\theta}=(1-ae^{-i\theta})^2\sum_tu_te^{-it\theta}$.
In the product with $k(e^{i\theta})$, a term pairing $u_t$ with $v_r$ has factor $e^{i(t+j-r)\theta}$ and contributes to the constant coefficient exactly when $r=t+j$.
The average is therefore $c\sum_{t,j}k_j\langle u_t,v_{t+j}\rangle-c_0\|v\|_{\ell_2}^2$.
Every expression being averaged is nonnegative, so this proves the claimed sequence inequality for finite support.
Truncation extends the inequality to every square-summable $u$.
Indeed, the displayed bound for the weighted shifts and the inequality $\|v\|_{\ell_2}\leq(1+a)^2\|u\|_{\ell_2}$ make both quadratic expressions continuous.

Conversely, for any $v\in\ell_2^d$, the sequence
\[
 u_t=\sum_{j=0}^{\infty}(j+1)a^jv_{t-j}
\]
has norm at most $(1-a)^{-2}\|v\|_{\ell_2}$ by the triangle inequality.
It satisfies $v_t=u_t-2au_{t-1}+a^2u_{t-2}$ by cancellation of the coefficients.
The coefficient of $v_t$ is one, the coefficient of $v_{t-1}$ is $2a-2a=0$, and every later coefficient is $\bigl((j+1)-2j+(j-1)\bigr)a^j=0$ for $j\geq2$.
The same definition gives $g(v)_t-cv_t=-cu_t$.
Consequently,
\begin{equation}
\label{eq:cycle-convolution-coercivity}
 \sum_{t,j}k_j\langle g(v)_t-cv_t,v_{t+j}\rangle
 \leq-c_0\|v\|_{\ell_2}^2.
\end{equation}

Suppose temporarily that the sequence $\bar y$ generated by \eqref{eq:rescaled-recursion} is square summable for a given $e\in\ell_2^d$ and $\tau\in[0,1]$.
Substitute $\bar y=e+g(\bar\sigma)$ into \eqref{eq:summed-cycle-supply}.
The preceding inequality bounds the terms involving $g(\bar\sigma)-c\bar\sigma$.
Cauchy--Schwarz and the bound for the weighted shifts bound the remaining terms by $c_k\|e\|_{\ell_2}\|\bar\sigma\|_{\ell_2}$.
Thus
\[
 0\leq-c_0\|\bar\sigma\|_{\ell_2}^2
       +c_k\|e\|_{\ell_2}\|\bar\sigma\|_{\ell_2}.
\]
If $\|\bar\sigma\|_{\ell_2}>0$, divide by this norm and then use $\bar y=e+g(\bar\sigma)$.
If this norm is zero, the same bounds hold directly.
We obtain
\[
 \|\bar\sigma\|_{\ell_2}\leq\frac{c_k}{c_0}\|e\|_{\ell_2},
 \qquad
 \|\bar y\|_{\ell_2}\leq c_y\|e\|_{\ell_2},
 \qquad c_y=1+\frac{c_gc_k}{c_0}.
\]
Once the separator is fixed, these constants depend only on $s,q$.
In particular, they are independent of $\tau$, $e$, the objective, and the dimension.

We now remove this assumption by increasing $\tau$ from zero to one in finitely many steps.
Choose a positive integer $m>2c_yc_g(s^2-1)$ and set $\tau_j=j/m$ for $j=0,\ldots,m$.
We prove by induction on $j$ that, for every square-summable $e$ vanishing at negative indices, the sequence $\bar y$ generated by \eqref{eq:rescaled-recursion} at $\tau=\tau_j$ satisfies
\[
 \sum_{t=0}^{\infty}\|\bar y_t\|^2<\infty.
\]

\emph{Base case.}
At $\tau_0=0$, we have $\bar\sigma=0$ and $\bar y=e$, so the claim holds.

\emph{Induction step.}
Fix $1\leq j\leq m$ and assume the claim holds at $\tau_{j-1}$.
Fix any allowed $e$ and let $\bar y$ be the sequence at $\tau_j$.
We will bound $\sum_{t=0}^n\|\bar y_t\|^2$ independently of $n$.
For each integer $n\geq0$, define
\[
 u_t^{(n)}=
 \begin{cases}
  q^{-t/2}\nabla h(q^{t/2}\bar y_t),&0\leq t\leq n,\\
  0,&\text{otherwise},
 \end{cases}
 \qquad
 e^{(n)}=e+\frac1m g(u^{(n)}).
\]
Only finitely many terms of $u^{(n)}$ are nonzero, so the bound on $g$ makes $e^{(n)}$ square summable.
It also vanishes at negative indices.
The induction hypothesis therefore gives a square-summable sequence $\bar y^{(n)}$ at $\tau_{j-1}$ with $e^{(n)}$ in place of $e$.

The modification of $e$ makes the first $n+1$ terms of these two sequences agree.
Indeed, $g(u)_t$ uses only terms with indices less than $t$, so for $0\leq t\leq n$,
\[
 \bar y_t=e_t+\tau_jg(u^{(n)})_t
 =e_t^{(n)}+\tau_{j-1}g(u^{(n)})_t.
\]
The last equality uses $\tau_j=\tau_{j-1}+1/m$.
These are exactly the recurrence equations at $\tau_{j-1}$ with $e^{(n)}$ through time $n$.
They determine each term from the previous ones, so $\bar y_t^{(n)}=\bar y_t$ for $0\leq t\leq n$.

Since $\bar y^{(n)}$ is square summable, the preceding norm estimate applies to it.
Combining that estimate with the bound on $g$ and \eqref{eq:scaled-uncertainty} gives
\[
 \begin{aligned}
 \left(\sum_{t=0}^n\|\bar y_t\|^2\right)^{1/2}
 &\leq\|\bar y^{(n)}\|_{\ell_2}
 \leq c_y\|e^{(n)}\|_{\ell_2}\\
 &\leq c_y\|e\|_{\ell_2}
 +\frac{c_yc_g(s^2-1)}m
  \left(\sum_{t=0}^n\|\bar y_t\|^2\right)^{1/2}.
 \end{aligned}
\]
The coefficient multiplying the partial norm on the right is less than $1/2$.
Moving this term to the left gives, for every $n$,
\[
 \left(\sum_{t=0}^n\|\bar y_t\|^2\right)^{1/2}
 \leq2c_y\|e\|_{\ell_2}.
\]
Every partial sum of squared norms is therefore at most $4c_y^2\|e\|_{\ell_2}^2$.
These partial sums are nondecreasing and bounded, so their limit is finite.
This proves the claim at $\tau_j$ and completes the induction.

At $\tau_m=1$, the sequence $\bar y$ is square summable, and the gradient bound gives $\bar\sigma\in\ell_2^d$.
The earlier estimates for both sequences now apply and yield
\begin{equation}
\label{eq:scaled-loop-gain}
 \|\bar y\|_{\ell_2}^2+\|\bar\sigma\|_{\ell_2}^2
 \leq\left(c_y^2+\frac{c_k^2}{c_0^2}\right)\|e\|_{\ell_2}^2.
\end{equation}

Return to the NAG initialization sequence $e$ derived above.
Since $a<1$, the squared norm $\|e\|_{\ell_2}^2$ is a finite multiple of $\|x_0\|^2$ depending only on $s,q$.
Finally, $s^2x_{t+1}=(s^2-1)y_t-\sigma_t$ gives
\[
 q^{-(t+1)}\|x_{t+1}\|^2
 \leq\frac{(s^2-1)^2+1}{s^4q}
 \left(\|\bar y\|_{\ell_2}^2+\|\bar\sigma\|_{\ell_2}^2\right).
\]
Combining this with \eqref{eq:scaled-loop-gain} gives a finite constant $b_1(s,q)$ such that
\[
 \|x_t\|^2\leq b_1(s,q)q^t\|x_0\|^2,
 \qquad t\geq1.
\]
Set $b(s,q)=\max\{1,b_1(s,q)\}$.
Since $b(s,q)\geq b_1(s,q)$, the bound still holds for every $t\geq1$.
At $t=0$, we have $q^0=1$ and $b(s,q)\geq1$, so the bound reduces to $\|x_0\|^2\leq b(s,q)\|x_0\|^2$, which also holds.

The proof used coordinates centered at the minimizer.
Thus the vectors denoted above by $x_t$ and $x_0$ are $x_t-x_\star$ and $x_0-x_\star$ in the original coordinates.
The bound in those coordinates is therefore
\[
 \|x_t-x_\star\|^2\leq b(s,q)q^t\|x_0-x_\star\|^2,
 \qquad t\geq0,
\]
as claimed.
\end{proof}

\subsection{Assembly of the lower and upper bounds}

\begin{remark}[The case $\kappa=1$]
For $f\in\mathcal F_{1,1}$, strong convexity and smoothness at the minimizer give
\[
 \frac12\|x-x_\star\|^2
 \leq f(x)-f(x_\star)
 \leq\frac12\|x-x_\star\|^2.
\]
Hence $f(x)=f(x_\star)+\frac12\|x-x_\star\|^2$ and $\nabla f(x)=x-x_\star$ for every $x$.
When $\kappa=1$, the momentum coefficient is $\beta_1=0$ and the step size is one.
Thus $x_{t+1}=x_t-\nabla f(x_t)=x_\star$ for every $t\geq0$.
Both asymptotic factors in \eqref{eq:rho-fix}--\eqref{eq:rho-hor} are therefore zero.
\end{remark}

\begin{proof}[Proof of Theorem~\ref{thm:main}]
Fix $\kappa>1$ and set $s=\sqrt\kappa$.
Proposition~\ref{prop:fixed-domain-rate} gives
\[
 q_{\mathrm{quad}}(s)\leq\qopt(s)\leq1-1/s<1.
\]
We first prove $\qopt(s)\leq\rhofix(s^2)$ by constructing one fixed objective and initial point that attain $\qopt(s)$.

If $\qopt(s)=q_{\mathrm{quad}}(s)$, the scalar quadratic $f(x)=x^2/2$ attains this factor from any nonzero standard initialization by \eqref{eq:q0}.

Suppose $\qopt(s)>q_{\mathrm{quad}}(s)$.
Lemma~\ref{lem:finite-optimal-support} supplies a pair $(\nu_\star,\varphi_0^\star)$ satisfying \eqref{eq:star-feasible} at $q=\qopt(s)$, with $\nu_\star$ supported on finitely many angles.
The associated contracting and rotating trajectory uses at most two real coordinates per support angle, so it is finite-dimensional.
By Lemma~\ref{lem:stars}, the corresponding interpolation values can be chosen as $\varphi_t=\qopt(s)^t\varphi_0^\star$.
Lemma~\ref{lem:initialization-splice} then gives a single objective in $\Fclass{s^2}$ and an initialization $x_{-1}=x_0\neq x_\star$ whose NAG trajectory has asymptotic squared-distance factor $\qopt(s)$.

Thus in both cases,
\[
 \qopt(s)\leq\rhofix(s^2)\leq\rhohor(s^2).
\]

For the upper bound, fix any $q\in(\qopt(s),1)$.
Since $\qopt(s)=\max\mathcal Q(s)$ by \eqref{eq:qcyc}, the interpolation conditions \eqref{eq:star-feasible} are infeasible at this $q$.
By Lemma~\ref{lem:stars}, no probability measure on $[0,\pi]$ satisfies all finite-cycle inequalities.
The equivalence following Lemma~\ref{lem:strict-alternative} rules out alternative~1, so alternative~2 gives a nonnegative combination of finitely many cycle costs that is strictly negative at every angle.

Since $q>q_{\mathrm{quad}}(s)$, Proposition~\ref{prop:strict-separator-rate} gives a finite $b(s,q)\geq1$ that depends only on $s,q$.
Taking the supremum over objectives, dimensions, and initial points yields
\[
 \sup_{d\geq1}\sup_{f\in\Fclass{s^2}(\reals^d)}
 \sup_{x_0\neq x_\star}R_t(f,x_0)
 \leq b(s,q)q^t,
 \qquad t\geq0.
\]
Taking $t$th roots and the limit superior, with $q$ fixed, gives
\[
 \rhohor(s^2)\leq\lim_{t\to\infty}b(s,q)^{1/t}q=q.
\]

This holds for every $q\in(\qopt(s),1)$, so letting $q\downarrow\qopt(s)$ gives $\rhohor(s^2)\leq\qopt(s)$.
The constant $b(s,q)$ need not remain bounded as $q\downarrow\qopt(s)$, since the limit in $t$ was taken first.
Combining this upper bound with the lower bound proves \eqref{eq:main} for $\kappa>1$.

The bounds in \eqref{eq:qcyc} follow from the quadratic example and the inequality comparing iterations $0$ and $2$ in Lemma~\ref{lem:closed-rate-problem}.
Substituting $q_{\mathrm{quad}}(s)=(1-1/s)^2$ and $s=\sqrt\kappa$ gives \eqref{eq:classical-bracket}.
Both bounds tend to zero as $\kappa\downarrow1$ and to one as $\kappa\to\infty$, so $\qopt(\sqrt\kappa)$ has the same limits.
\end{proof}

\section{Proof of the sharp large-condition-number expansion}
\label{app:large-kappa}

\begin{proof}[Proof of Corollary~\ref{cor:large-kappa}]
Write $s=\sqrt\kappa$.
The quadratic lower bound in \eqref{eq:classical-bracket} already implies $\qopt(s)\to1$, so it suffices to prove
\begin{equation}
\label{eq:log-asymptotic}
 \lim_{s\to\infty}-s\log\qopt(s)=\frac43.
\end{equation}
We first show that, for each fixed $0<\lambda<4/3$ and all sufficiently large $s$, no $q\in[e^{-\lambda/s},1)$ is feasible for \eqref{eq:star-feasible}.
We then construct a feasible measure at $q=e^{-2\alpha/s}$ for each fixed $2/3<\alpha<3/4$ and all sufficiently large $s$.
These give upper and lower bounds on $\qopt(s)$, respectively, and the inequalities reverse for $-s\log\qopt(s)$.

\paragraph{Upper bound.}
Fix $q\in(q_{\mathrm{quad}}(s),1)$ and a pair $(\nu,\varphi_0)$ satisfying \eqref{eq:star-feasible}.
Because $d_s(z)$ is bounded on $|z|=\sqrt q<1$, the terms in \eqref{eq:edge-cost} defining $c_{t0}(\theta)$ tend to zero uniformly in $\theta$ as $t\to\infty$.
Hence $\bar c_{t0}(\nu)\to0$, so the lower inequality in \eqref{eq:star-feasible} gives $\varphi_0\geq0$ in the limit.
The upper inequality with $t=1$ then gives $\bar c_{01}(\nu)\geq0$.
Put $r=\sqrt q$ and $x=2r\cos\theta\in[-2r,2r]$.
Equations \eqref{eq:nag-transfer} and \eqref{eq:edge-cost} give
\[
 c_{01}(\theta)=-\frac{s^2-1}{2s^2q^2}\psi_s(q,x),
\]
where
\begin{equation}
\label{eq:lag-one-quadratic}
 \psi_s(q,x)=(s-1)^2x^2-q(qs^2+3s^2-4s+1)x
              +6q^2s^2-4q^2s-2qs^2+4qs-2q.
\end{equation}
Its unconstrained minimizer and value at the right endpoint are
\begin{equation}
\label{eq:lag-one-vertex}
 x_{\rm v}=\frac{q(qs^2+3s^2-4s+1)}{2(s-1)^2},
 \qquad
 \psi_s(q,2r)=2q(1-r)(sr-s+1)^2>0.
\end{equation}
If $x_{\rm v}\geq2r$, the minimum of $\psi_s(q,x)$ over $[-2r,2r]$ is its value at the right endpoint, which is positive by \eqref{eq:lag-one-vertex}.
Hence $c_{01}(\theta)<0$ for every angle, contradicting $\bar c_{01}(\nu)\geq0$.
Fix $0<\lambda<4/3$.
At $q=e^{-\lambda/s}$,
\[
 s(2\sqrt q-x_{\rm v})\longrightarrow\frac{3\lambda}{2}-2<0.
\]
Since $3\lambda/2-2<0$, the displayed limit implies $x_{\rm v}>2r$ at $q=e^{-\lambda/s}$ for all sufficiently large $s$.
To cover every larger $q<1$, write the condition $x_{\rm v}\geq2r$ as
\[
 r(r^2s^2+3s^2-4s+1)-4(s-1)^2\geq0.
\]
Its left-hand side is strictly increasing in $r$ for $s>1$, so the condition also holds for every $q\in[e^{-\lambda/s},1)$.
Since $e^{-\lambda/s}>q_{\mathrm{quad}}(s)$ for all sufficiently large $s$, the preceding argument rules out feasibility throughout this interval.
Therefore $\qopt(s)\leq e^{-\lambda/s}$ and $-s\log\qopt(s)\geq\lambda$.
Taking the limit inferior as $s\to\infty$ and then letting $\lambda\uparrow4/3$ gives
\[
 \liminf_{s\to\infty}-s\log\qopt(s)\geq\frac43.
\]
The same asymptotic upper bound follows from Gannot's result~\cite[Proposition~2.13]{gannot2022frequency}.

\paragraph{Lower bound.}
Fix $2/3<\alpha<3/4$ and set $r=e^{-\alpha/s}$ and $q=r^2$.
We seek a probability measure $\nu$ on $[0,\pi]$ such that $\bar c_{t0}(\nu)\geq0$ and $\bar c_{0t}(\nu)\geq0$ for every integer $t\geq1$.
Then \eqref{eq:star-feasible} holds with $\varphi_0=0$.
For $0<h<r$, let $\nu$ be the probability measure on $[0,\pi]$ with density
\begin{equation}
\label{eq:rational-circle-measure}
 \frac{(1-h^2)^5}{\pi(1+4h^2+h^4)|1-he^{i\theta}|^6}.
\end{equation}

This density gives explicit formulas for the two integrals needed to check feasibility.
Write $z=re^{i\theta}$ and $m_s=\int\operatorname{Re}\overline{d_s(z)}\,d\nu$.
Equation \eqref{eq:edge-cost} gives
\begin{align*}
 \bar c_{t0}(\nu)&=q^t m_s-\int\operatorname{Re}\left[\overline{d_s(z)}z^t\right]\,d\nu,\\
 \bar c_{0t}(\nu)&=m_s-\int\operatorname{Re}\left[\overline{d_s(z)}\bar z^t\right]\,d\nu.
\end{align*}
The subscripts $1$ and $2$ below correspond to the integrals containing $z^t$ and $\bar z^t$ in these two costs, respectively.
To evaluate them, use the absolutely convergent series
\[
 (1-he^{i\theta})^{-3}
 =\sum_{k=0}^{\infty}\binom{k+2}{2}h^ke^{ik\theta}.
\]
Substitute this series and its complex conjugate into the density, and use \eqref{eq:nag-transfer}.
After taking real parts, all nonconstant terms integrate to zero because $\int_0^\pi\cos(j\theta)\,d\theta=0$ for every nonzero integer $j$.
With
\begin{align*}
 \delta={}&s^2r^2(h^4+4h^2+1)
 -6sr(s-1)h(h^2+1)+6(s-1)^2h^2,\\
 e_1={}&sr-(s-1)h,
 &e_2={}&srh-(s-1),
\end{align*}
this gives
\[
 m_s=-\frac{(s^2-1)\delta}{s^2r^2(1+4h^2+h^4)}.
\]
For $\delta\ne0$ and every integer $t\geq0$, we obtain
\begin{align}
 \int\operatorname{Re}\left[\overline{d_s(z)}z^t\right]\,d\nu
 &=m_s(rh)^t\left(1+a_1t+b_1\binom{t}{2}\right),\notag\\
 \int\operatorname{Re}\left[\overline{d_s(z)}\bar z^t\right]\,d\nu
 &=m_s(rh)^t\left(1+a_2t+b_2\binom{t}{2}\right),
\label{eq:rational-circle-moments}
\end{align}
where the coefficients are
\begin{align*}
 a_1&=-\frac{(h^2-1)e_1
 [(s-1)h^3+srh^2-4(s-1)h+2sr]}{\delta},
 &b_1&=\frac{(h^2-1)^2e_1^2}{\delta},\\
 a_2&=-\frac{(h^2-1)e_2
 [srh^2-3(s-1)h+2sr]}{h\delta},
 &b_2&=\frac{(h^2-1)^2e_2^2}{h^2\delta}.
\end{align*}
To satisfy \eqref{eq:star-feasible} with $\varphi_0=0$, it suffices to have
\begin{equation}
\label{eq:coefficient-tests}
 m_s>0,\qquad
 a_1\leq r/h-1,\qquad
 a_2\leq1/(rh)-1.
\end{equation}
Indeed, $m_s>0$ implies $\delta<0$, and hence $b_1\leq0$ and $b_2\leq0$.
For $u\geq0$, $a\leq u$, and $b\leq0$, Bernoulli's inequality gives
\[
 1+at+b\binom{t}{2}\leq1+ut\leq(1+u)^t,
 \qquad t=0,1,\ldots.
\]
Apply this with $u=r/h-1$ and $u=1/(rh)-1$, respectively.
The first integral in \eqref{eq:rational-circle-moments} is then at most $m_s q^t$, and the second is at most $m_s$.
Thus both averaged costs are nonnegative for every integer $t\geq1$.

We now choose $h$ to satisfy \eqref{eq:coefficient-tests}.
Write $\epsilon=1/s$ and $h=e^{-\eta\epsilon}$, and define the two slacks
\[
 g_1=r/h-1-a_1,\qquad
 g_2=1/(rh)-1-a_2.
\]
We will choose $\eta$ so that these slacks are equal and positive.
Set
\[
 \gamma=\sqrt{\frac{3\alpha(1-\alpha)^2}{3\alpha-2}}.
\]
The function $g(\epsilon,\eta)=(g_1-g_2)/\epsilon$ extends smoothly to a neighborhood of $(0,\gamma)$, with
\[
 g(0,\eta)=-2\,
 \frac{3\alpha(1-\alpha)^2-(3\alpha-2)\eta^2}
 {3\alpha^2-6\alpha-\eta^2+3}.
\]
Our choice of $\gamma$ gives $g(0,\gamma)=0$, and
\[
 \partial_\eta g(0,\gamma)
 =\frac{4(3\alpha-2)\gamma}
 {3\alpha^2-6\alpha-\gamma^2+3}\ne0.
\]
The denominator equals $6(\alpha-1)^3/(3\alpha-2)<0$.
The implicit function theorem therefore gives a smooth choice $\gamma_s=\gamma+O(s^{-1})$ for which $g_1=g_2$ when $\eta=\gamma_s$.
For this choice, Taylor expansion gives
\begin{equation}
\label{eq:circle-gap-expansions}
 g_1=g_2=\frac{\alpha}{2(3\alpha-2)}s^{-2}+O(s^{-3}).
\end{equation}
The same substitution gives
\[
 m_s\longrightarrow
 -\frac{3\alpha^2-6\alpha-\gamma^2+3}{3}
 =\frac{2(1-\alpha)^3}{3\alpha-2}>0.
\]
Thus $m_s$ and both slacks are positive for all sufficiently large $s$.
Also, $\gamma^2-\alpha^2=\alpha(3-4\alpha)/(3\alpha-2)>0$, so $\gamma>\alpha$.
Consequently, $h=e^{-\gamma_s/s}$ satisfies $0<h<r<1$ for all sufficiently large $s$.
Since $\alpha<1$, we have $r=e^{-\alpha/s}>1-\alpha/s>1-1/s$, so $q>q_{\mathrm{quad}}(s)$.
The measure $\nu$ therefore satisfies \eqref{eq:star-feasible} with $\varphi_0=0$ at $q=e^{-2\alpha/s}$ for all sufficiently large $s$.
Hence $\qopt(s)\geq e^{-2\alpha/s}$, which gives
\[
 \limsup_{s\to\infty}-s\log\qopt(s)\leq2\alpha.
\]
Letting $\alpha\downarrow2/3$ and combining with the upper bound proves \eqref{eq:log-asymptotic}.
Since $(-\log u)/(1-u)\to1$ as $u\uparrow1$, this is equivalent to \eqref{eq:large-kappa}.
The square-root expansion follows from $\sqrt{1-v}=1-v/2+o(v)$.
\end{proof}

\section{Quadratic forms and the KYP lemma}
\label{app:kyp-energy}

The Kalman--Yakubovich--Popov (KYP) lemma expresses a frequency-domain inequality as a linear matrix inequality in a real symmetric matrix $P$.
We apply it to the finite combination of interpolation inequalities used in Proposition~\ref{prop:strict-separator-rate}.
This representation is not needed for Theorem~\ref{thm:main}.

The matrix $P$ defines a quadratic form satisfying an inequality at each iteration.
It defines a squared norm if $P$ is positive definite, which the KYP lemma does not guarantee.

We also construct an explicit positive quadratic form for each oscillatory component of the lower-bound trajectories.
After removing the geometric contraction, its value remains constant along the trajectory.
In suitable coordinates, the resulting motion is a rotation.

\subsection{From the cycle inequalities to a matrix inequality}

Fix $s>1$ and $q\in(\qopt(s),1)$.
As in Appendix~\ref{app:uniform-upper-bound}, translate the minimizer to zero and set $\bar u_t=q^{-t/2}u_t$.
Use the finite sum $k(z)$ and constants $c_0,c_k,c_g$ from the proof of Proposition~\ref{prop:strict-separator-rate}, and put $c=(s^2-1)^{-1}$ and $a=(1-1/s)/\sqrt q\in(0,1)$.
For this appendix, write $\widehat g(z)=p_s(\sqrt q\,z)+c$.
On the unit circle, the series defining $g$ gives
\[
 \widehat g(z)=-c\sum_{j=1}^{\infty}(j+1)a^jz^{-j}.
\]
The absolute sums of the coefficients give $|\widehat g(z)|\leq c_g$ and $|k(z)|\leq c_k$ on the unit circle.
Define the Hermitian matrix
\begin{equation}
\label{eq:explicit-cycle-multiplier}
 \Pi(z)=
 \begin{bmatrix}
  0&k(z)/2\\
  \overline{k(z)}/2&-c\operatorname{Re}k(z)
 \end{bmatrix}\otimes I_d.
\end{equation}
The scalar inequality \eqref{eq:strict-cycle-fdi} gives
$[\widehat g(z)I_d;I_d]^*\Pi(z)[\widehat g(z)I_d;I_d]\preceq-c_0I_d$ for $|z|=1$, with $c_0>0$ as in that inequality.
For each cycle, shift the indices in \eqref{eq:scaled-local-supply} so its largest index becomes the current time.
Let $\omega$ collect the resulting finitely many delayed variables, with the factors $q^{n_j/2}$ appearing in that equation.
Let the real symmetric matrix $M$ represent their sum with the nonnegative coefficients $w_\gamma$ of the separator.
The common shift of each cycle cancels on the unit circle, giving $\Pi=\omega^*M\omega$.
For the actual query points and transformed gradients, with $\bar y_t=\bar\sigma_t=0$ for $t<0$, the represented sum is nonnegative at each iteration.
As in \eqref{eq:rescaled-recursion}, write $\bar y=e+g(\bar\sigma)$, or equivalently
\[
 v=\begin{bmatrix}\bar\sigma\\e\end{bmatrix},
 \qquad T(z)=\begin{bmatrix}\widehat g(z)I_d&I_d\\I_d&0\end{bmatrix},
 \qquad \begin{bmatrix}\bar y\\\bar\sigma\end{bmatrix}=Tv.
\]

For $u,b\in\mathbb C^d$, put $v=[u;b]$, $X=\|u\|$, and $E=\|b\|$.
The preceding bounds and Cauchy--Schwarz give, for $|z|=1$,
\[
 v^*T^*\Pi Tv\leq-c_0X^2+c_kXE,
 \qquad \|Tv\|^2\leq(2c_g^2+1)X^2+2E^2.
\]
Choose
\[
 \alpha\doteq\frac{c_0}{4(2c_g^2+1)},\qquad
 b_0\doteq\frac{c_k^2}{c_0}+2\alpha+1.
\]
Completing the square yields
\[
 v^*T^*\Pi Tv+\alpha\|Tv\|^2-b_0E^2
 \leq-\frac{c_0}{2}X^2-E^2-\frac{(c_0X/2-c_kE)^2}{c_0}.
\]
The right-hand side is strictly negative for $v\ne0$, so
\begin{equation}
\label{eq:performance-fdi}
 T^*\Pi T+\alpha T^*T
 \prec b_0\begin{bmatrix}0&0\\0&I\end{bmatrix}.
\end{equation}
These positive constants depend only on $s,q$ and the selected cycles and coefficients.

To express \eqref{eq:performance-fdi} as one quadratic form, let $E_ev=e$ and define
\begin{equation}
\label{eq:kyp-augmented-transfer}
 H(z)=
 \begin{bmatrix}
  \omega(z)T(z)\\T(z)\\E_e
 \end{bmatrix},
 \qquad
 \widehat M=\operatorname{diag}(M,\alpha I,-b_0 I).
\end{equation}
Writing $w=\omega Tv$, the combined output is
\[
 o=Hv=\begin{bmatrix}w\\\bar y\\\bar\sigma\\e\end{bmatrix}.
\]
Thus \eqref{eq:performance-fdi} is
\[
 H(e^{i\theta})^*\widehat M H(e^{i\theta})\prec0,
 \qquad -\pi\leq\theta\leq\pi.
\]

The map $H$ can be evaluated recursively using the rescaled NAG recurrence and the delayed variables in $\omega$.
Choose a real linear recurrence with the smallest possible state dimension, called a \emph{minimal realization}:
\begin{equation}
\label{eq:kyp-realization}
 \xi_{t+1}=A\xi_t+Bv_t,
 \qquad o_t=C\xi_t+Dv_t.
\end{equation}
With $\xi_0=0$, these equations give $o=Hv$.
Minimality ensures that $(A,B)$ is controllable: every state can be reached from zero by a finite sequence of inputs.
The only possible poles of $H$ are $0$ and $a$, so the eigenvalues of $A$ lie strictly inside the unit circle.
In particular, $\det(zI-A)\ne0$ for $|z|=1$.
These are the hypotheses of the KYP lemma used below.

\begin{proposition}[Quadratic inequality from the KYP lemma]
\label{prop:kyp-time-domain}
There exist a real symmetric matrix $P$ and $\delta>0$ such that
\begin{equation}
\label{eq:kyp-lmi}
 K(P)\doteq
 \begin{bmatrix}
  A^TPA-P&A^TPB\\B^TPA&B^TPB
 \end{bmatrix}
 +\begin{bmatrix}C^T\\D^T\end{bmatrix}
  \widehat M
  \begin{bmatrix}C&D\end{bmatrix}
 \preceq-\delta I.
\end{equation}
Along every trajectory of \eqref{eq:kyp-realization}, the quadratic form $V_P(\xi)\doteq\xi^TP\xi$ satisfies
\begin{equation}
\label{eq:kyp-one-step-balance}
 V_P(\xi_{t+1})-V_P(\xi_t)+o_t^T\widehat M o_t
 \leq-\delta(\|\xi_t\|^2+\|v_t\|^2).
\end{equation}
\end{proposition}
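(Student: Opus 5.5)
The plan is to apply the discrete-time KYP lemma (strict version) to the realization \eqref{eq:kyp-realization} and the frequency-domain inequality already established, and then to read off the one-step inequality by evaluating the resulting matrix inequality along a trajectory.

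\emph{Step 1: a uniform frequency-domain margin.} The hypotheses of the strict KYP lemma are that $(A,B)$ is controllable, that $\det(e^{i\theta}I-A)\neq0$ on the unit circle, and that
\[
 \begin{bmatrix}(e^{i\theta}I-A)^{-1}B\\ I\end{bmatrix}^*
 \begin{bmatrix}C&D\end{bmatrix}^T\widehat M\begin{bmatrix}C&D\end{bmatrix}
 \begin{bmatrix}(e^{i\theta}I-A)^{-1}B\\ I\end{bmatrix}\prec0
\]
for all $\theta$. The middle expression is exactly $H(e^{i\theta})^*\widehat MH(e^{i\theta})$, since $H(z)=C(zI-A)^{-1}B+D$ for the realization. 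Controllability and the absence of poles on the circle were noted just before the proposition. The strict inequality is \eqref{eq:performance-fdi}.

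The inequality needs a uniform margin. The completed-square bound gives the left side at most $-\min(c_0/2,1)\|v\|^2$, uniformly in $\theta$, so the strict inequality holds with a uniform margin $\epsilon\|v\|^2$. Here we use that $\Pi=\omega^*M\omega$ on the unit circle, so the $M$ block of $\widehat M$ reproduces $T^*\Pi T$.

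\emph{Step 2: applying the KYP lemma.} The strict (Rantzer) KYP lemma then yields a real symmetric $P$ with $K(P)\prec0$. Realness follows because all data are real, so we may replace $P$ by its real part. Since the matrix is finite, its largest eigenvalue is negative, and we set $\delta$ to be the negative of that eigenvalue. This gives \eqref{eq:kyp-lmi}.

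The main point to check is the version of the lemma. The strict lemma needs only the absence of eigenvalues of $A$ on the unit circle together with controllability; stability is not required. It delivers $P$ symmetric but not necessarily definite, which is consistent with the paper's remark that $V_P$ need not be a norm.

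\emph{Step 3: the one-step balance.} Multiply \eqref{eq:kyp-lmi} on both sides by $[\xi_t;v_t]$. Using $\xi_{t+1}=A\xi_t+Bv_t$, the first block becomes $\xi_{t+1}^TP\xi_{t+1}-\xi_t^TP\xi_t$. Using $o_t=C\xi_t+Dv_t$, the second block becomes $o_t^T\widehat Mo_t$. The right-hand side is $-\delta(\|\xi_t\|^2+\|v_t\|^2)$, which is \eqref{eq:kyp-one-step-balance}. This step is purely algebraic and holds along every trajectory, with no assumption on the sign of $V_P$.
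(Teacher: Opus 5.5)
Your proposal is correct and follows the paper's proof. You invoke Rantzer's strict discrete-time KYP equivalence using the frequency-domain inequality \eqref{eq:performance-fdi} to obtain a real symmetric $P$ with $K(P)\prec0$, take $\delta$ from its largest eigenvalue, and expand the quadratic form along one step of the realization to get the exact identity \eqref{eq:kyp-exact-balance}; your extra checks on the uniform margin and the realness of $P$ are harmless, and the strict form of the lemma does not even require controllability.
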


\begin{proof}
The strict discrete KYP equivalence~\citep{rantzer1996kyp}, also used in \citet[equation~(21) and the proof of Theorem~4.5]{zhang2019noncausal}, gives a real symmetric $P$ such that $K(P)\prec0$.
Thus $K(P)\preceq-\delta I$ for some $\delta>0$.
For one step of \eqref{eq:kyp-realization}, expansion gives the identity
\begin{equation}
\label{eq:kyp-exact-balance}
 V_P(\xi_{t+1})-V_P(\xi_t)+o_t^T\widehat M o_t
 =\begin{bmatrix}\xi_t\\v_t\end{bmatrix}^{\!T}
 K(P)
 \begin{bmatrix}\xi_t\\v_t\end{bmatrix}.
\end{equation}
Applying \eqref{eq:kyp-lmi} proves \eqref{eq:kyp-one-step-balance}.
\end{proof}

For a rescaled NAG trajectory, \eqref{eq:scaled-local-supply} gives $w_t^TMw_t\geq0$ for every $t\geq0$.
Expanding the output term in \eqref{eq:kyp-one-step-balance} and summing gives, for every integer $N\geq0$,
\begin{equation}
\label{eq:kyp-energy-balance}
 \begin{split}
 V_P(\xi_{N+1})
 &+\sum_{t=0}^N\left[
   w_t^TMw_t
   +\alpha(\|\bar y_t\|^2+\|\bar\sigma_t\|^2)
   +\delta(\|\xi_t\|^2+\|v_t\|^2)
  \right]\\
 &\leq V_P(\xi_0)+b_0\sum_{t=0}^N\|e_t\|^2.
 \end{split}
\end{equation}

This has the form of a dissipation inequality~\citep{willems1972dissipative}, but $V_P$ need not be nonnegative.
For the standard NAG initialization, $\xi_0=0$ and $e_t=(1+t/(s+1))a^tx_0$.
Discarding $V_P(\xi_{N+1})$ at a finite $N$ requires a separate proof of its nonnegativity.

For each fixed $q>\qopt(s)$, the proof of Proposition~\ref{prop:strict-separator-rate} already establishes $\bar\sigma,e\in\ell_2^d$ for the actual trajectory.
Since the eigenvalues of $A$ have modulus less than one, $\sum_{j\geq0}\|A^jB\|<\infty$.
The formula
\[
 \xi_t=\sum_{j=0}^{t-1}A^jBv_{t-1-j},\qquad t\geq1,
\]
and the triangle inequality therefore give $\xi\in\ell_2$.
Consequently $\xi_t\to0$ and $V_P(\xi_t)\to0$ for every fixed symmetric $P$.
One may thus let $N\to\infty$ in \eqref{eq:kyp-energy-balance} without assuming $P\succeq0$.
This uses the square summability proved in Appendix~\ref{app:uniform-upper-bound}; it is not an independent proof of convergence.

\subsection{When the quadratic form defines a squared norm}

The cited KYP lemma asserts only $P=P^T$; it does \emph{not} assert $P\succeq0$.
For example, consider the stable scalar system $u_{t+1}=v_t$, $o_t=u_t$, with quadratic output term $-o_t^2$.
On the unit circle, the relation between input and output is multiplication by $z^{-1}$, so the frequency-domain expression is $-|e^{-i\theta}|^2=-1$.
For a scalar $p$, the KYP matrix is
\[
 \begin{bmatrix}-p-1&0\\0&p\end{bmatrix},
\]
which is negative definite exactly when $-1<p<0$.
Every feasible $p$ is therefore negative, so the quadratic form cannot be a squared norm.

Interpolation-based Lyapunov analyses, including AutoLyap, impose lower-bound and decrease conditions on iterates, gradients, and function values satisfying the algorithm and interpolation inequalities~\citep{taylor2018lyapunov,upadhyaya2024automated,upadhyaya2025autolyap}.
Their potentials may combine quadratic terms in iterates and gradients with linear terms in function values.
Such a potential can be nonnegative on these quantities even when its quadratic coefficient matrix is indefinite.
A sufficient condition is a decomposition into a nonnegative quadratic form and a nonnegative combination of the interpolation residuals, with their signs chosen so that each residual is nonnegative.

We have not established such a lower bound for the KYP quadratic form above.
A Lyapunov convergence argument would also need to account for $e_t=(1+t/(s+1))a^tx_0$.
The following corollary assumes $P\succ0$, so the quadratic form is a squared norm for every state $\xi$.

\begin{corollary}[Coordinates for a positive-definite quadratic form]
\label{cor:kyp-positive-energy}
Suppose $P\succ0$ satisfies \eqref{eq:kyp-lmi}.
Set $R=P^{1/2}$ and $\eta_t=R\xi_t$, so $V_P(\xi_t)=\|\eta_t\|^2$.
Along a rescaled NAG trajectory satisfying the local inequalities \eqref{eq:scaled-local-supply},
\begin{equation}
\label{eq:kyp-euclidean-balance}
 \|\eta_{t+1}\|^2
 \leq\|\eta_t\|^2+b_0\|e_t\|^2
 -\delta(\|R^{-1}\eta_t\|^2+\|v_t\|^2).
\end{equation}
Whenever $e_t=0$, the squared norm cannot increase at that step.
\end{corollary}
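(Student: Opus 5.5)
The statement to prove is Corollary~\ref{cor:kyp-positive-energy}. The plan is to specialize the one-step balance \eqref{eq:kyp-one-step-balance} of Proposition~\ref{prop:kyp-time-domain} to a rescaled NAG trajectory and then change coordinates. Since $P\succ0$, the symmetric square root $R=P^{1/2}$ exists and is invertible. With $\eta_t=R\xi_t$ we get $V_P(\xi_t)=\xi_t^TR^TR\xi_t=\|\eta_t\|^2$ and $\xi_t=R^{-1}\eta_t$. Hence $\|\xi_t\|^2=\|R^{-1}\eta_t\|^2$, and \eqref{eq:kyp-one-step-balance} reads
\[
 \|\eta_{t+1}\|^2-\|\eta_t\|^2+o_t^T\widehat Mo_t
 \leq-\delta\bigl(\|R^{-1}\eta_t\|^2+\|v_t\|^2\bigr).
\]

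Next I lower-bound the output term using the block structure of $\widehat M=\operatorname{diag}(M,\alpha I,-b_0I)$ from \eqref{eq:kyp-augmented-transfer}. Since $o_t=(w_t,\bar y_t,\bar\sigma_t,e_t)$, we have
\[
 o_t^T\widehat Mo_t=w_t^TMw_t+\alpha\bigl(\|\bar y_t\|^2+\|\bar\sigma_t\|^2\bigr)-b_0\|e_t\|^2 .
\]
The local cycle inequalities \eqref{eq:scaled-local-supply}, combined with the nonnegative separator weights $w_\gamma$ that define $M$, give $w_t^TMw_t\geq0$. The middle term is nonnegative because $\alpha>0$. Therefore $o_t^T\widehat Mo_t\geq-b_0\|e_t\|^2$.

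Substituting this bound into the displayed inequality and rearranging yields \eqref{eq:kyp-euclidean-balance}. If $e_t=0$, the right-hand side is $\|\eta_t\|^2$ minus a nonnegative quantity, so $\|\eta_{t+1}\|\leq\|\eta_t\|$.

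The only delicate point is confirming that $w_t^TMw_t\geq0$ really holds at every step along the realized state trajectory. This needs the realization \eqref{eq:kyp-realization} to be started at $\xi_0=0$, with the zero convention for negative indices, so that the delayed variables in $\omega$ coincide with the actual query points and transformed gradients. I would check this against the paper's remark that the represented sum is nonnegative at each iteration. Everything else is algebra.
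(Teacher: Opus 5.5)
Your proposal is correct and follows essentially the same route as the paper: substitute $\xi_t=R^{-1}\eta_t$ into \eqref{eq:kyp-one-step-balance}, expand $o_t^T\widehat M o_t$ blockwise, and drop the nonnegative terms $w_t^TMw_t$ and $\alpha(\|\bar y_t\|^2+\|\bar\sigma_t\|^2)$. The point you flag, $w_t^TMw_t\geq0$, is supplied by the corollary's hypothesis and by the paper's remark just before \eqref{eq:kyp-energy-balance} (with $\xi_0=0$ and the zero convention for negative indices), so nothing further is needed.
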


\begin{proof}
Since $P=R^TR$, we have $V_P(\xi_t)=\|R\xi_t\|^2$.
Expand the output term in \eqref{eq:kyp-one-step-balance} and substitute $\xi_t=R^{-1}\eta_t$.
Dropping $w_t^TMw_t\geq0$ and $\alpha(\|\bar y_t\|^2+\|\bar\sigma_t\|^2)\geq0$ gives \eqref{eq:kyp-euclidean-balance}.
The same substitution in \eqref{eq:kyp-energy-balance} gives the corresponding finite-horizon bound.
\end{proof}

For fixed matrices $A,B,C,D,\widehat M$, one can seek a positive-definite $P$ satisfying
\begin{equation}
\label{eq:kyp-positive-feasibility}
 P\succeq\epsilon I,
 \qquad K(P)\preceq-\delta I,
\end{equation}
with variables $P=P^T$, $\epsilon>0$, and $\delta>0$.
We do not claim generic feasibility of \eqref{eq:kyp-positive-feasibility}.

Removing the time scaling by setting $z_t=q^{t/2}\xi_t$ gives $V_P(z_t)=q^t\|\eta_t\|^2$.
At any step for which $e_t=0$, Corollary~\ref{cor:kyp-positive-energy} therefore gives
\[
 V_P(z_{t+1})\leq q\,V_P(z_t).
\]

\subsection{A quadratic invariant for one oscillatory component}

Fix $0<q<1$ and $0<\theta<\pi$.
A real coordinate of a component with geometric factor $\sqrt q\,e^{i\theta}$ satisfies
\begin{equation}
\label{eq:modal-recurrence}
 u_{t+1}=2\sqrt q\cos\theta\,u_t-q u_{t-1},
 \qquad t\geq0.
\end{equation}
For any real sequence $(u_t)_{t\geq-1}$ satisfying this recurrence, remove the contraction by setting $\bar u_t=q^{-t/2}u_t$.
Collect two successive values as
\[
 \xi_t=\begin{bmatrix}\bar u_t\\\bar u_{t-1}\end{bmatrix},
 \qquad
 A_\theta=\begin{bmatrix}2\cos\theta&-1\\1&0\end{bmatrix}.
\]
Dividing \eqref{eq:modal-recurrence} by $q^{(t+1)/2}$ gives $\xi_{t+1}=A_\theta\xi_t$.

\begin{proposition}[Rotation and quadratic invariant]
\label{prop:self-similar-energy}
Define
\[
 P_\theta=\begin{bmatrix}1&-\cos\theta\\-\cos\theta&1\end{bmatrix},
 \qquad
 R_\theta=\begin{bmatrix}1&-\cos\theta\\0&\sin\theta\end{bmatrix}.
\]
Then $P_\theta=R_\theta^TR_\theta\succ0$ and
\begin{equation}
\label{eq:modal-energy}
 A_\theta^TP_\theta A_\theta=P_\theta,
 \qquad
 E_t\doteq\xi_t^TP_\theta\xi_t=E_0.
\end{equation}
Moreover,
\begin{equation}
\label{eq:modal-rotation}
 R_\theta A_\theta R_\theta^{-1}
 =\begin{bmatrix}
   \cos\theta&-\sin\theta\\
   \sin\theta&\cos\theta
  \end{bmatrix}.
\end{equation}
\end{proposition}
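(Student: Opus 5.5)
The statement is a finite collection of $2\times2$ matrix identities, so the plan is to verify each one by direct multiplication. All quantities depend only on $\theta$; the factor $q$ was already removed when passing to $\bar u_t$. Throughout we use the abbreviations $c=\cos\theta$ and $\sigma=\sin\theta$. Since $0<\theta<\pi$, we have $\sigma>0$, so $R_\theta$ is upper triangular with nonzero diagonal entries $1$ and $\sigma$, and is therefore invertible.

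\emph{Factorization and positive definiteness.} Compute $R_\theta^TR_\theta$ entrywise. The $(1,1)$ entry is $1$, the off-diagonal entries are $-c$, and the $(2,2)$ entry is $c^2+\sigma^2=1$. This gives $P_\theta$. Because $R_\theta$ is invertible, $\xi^TP_\theta\xi=\|R_\theta\xi\|^2>0$ for every $\xi\ne0$, so $P_\theta\succ0$.

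\emph{Rotation.} We will prove \eqref{eq:modal-rotation} before \eqref{eq:modal-energy}, since the invariance follows from it. Write $Q_\theta$ for the rotation matrix in \eqref{eq:modal-rotation}. Instead of inverting $R_\theta$, we check the equivalent identity $R_\theta A_\theta=Q_\theta R_\theta$.
\begin{itemize}
\item The left side has first row $(2c-c,\,-1)=(c,\,-1)$ and second row $(\sigma,\,0)$.
\item The right side has first row $(c,\,-c^2-\sigma^2)=(c,\,-1)$ and second row $(\sigma,\,-\sigma c+c\sigma)=(\sigma,\,0)$.
\end{itemize}
The two sides agree, which proves \eqref{eq:modal-rotation}.

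\emph{Invariance.} From $A_\theta=R_\theta^{-1}Q_\theta R_\theta$ and $Q_\theta^TQ_\theta=I$, we obtain
\[
A_\theta^TP_\theta A_\theta=R_\theta^TQ_\theta^TR_\theta^{-T}R_\theta^TR_\theta R_\theta^{-1}Q_\theta R_\theta=R_\theta^TR_\theta=P_\theta.
\]
Now $\xi_{t+1}=A_\theta\xi_t$ holds by the rescaled recurrence noted before the proposition. Hence $E_{t+1}=\xi_t^TA_\theta^TP_\theta A_\theta\xi_t=E_t$, and induction on $t\ge0$ gives $E_t=E_0$.

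As a sanity check, $\bar u_t=\operatorname{Re}(\alpha e^{it\theta})$ should produce $E_t=\|R_\theta\xi_t\|^2=|\alpha|^2\sigma^2$. No step is a real obstacle. The only point needing care is the assumption $0<\theta<\pi$, which guarantees $\sigma\ne0$ and hence that $R_\theta$ is invertible and $P_\theta$ is definite. At $\theta\in\{0,\pi\}$ the matrix $P_\theta$ degenerates to a singular matrix.
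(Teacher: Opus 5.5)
Your proof is correct and follows the paper's argument. Both use direct multiplication to get $P_\theta=R_\theta^TR_\theta$ and the conjugation identity, which you sensibly check as $R_\theta A_\theta=Q_\theta R_\theta$ to avoid inverting $R_\theta$. Both then use $\sin\theta>0$ for invertibility of $R_\theta$ and definiteness of $P_\theta$, and deduce $A_\theta^TP_\theta A_\theta=P_\theta$ and $E_t=E_0$ from orthogonality of the rotation matrix.
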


\begin{proof}
Direct multiplication gives $P_\theta=R_\theta^TR_\theta$.
Since $\sin\theta>0$, the matrix $R_\theta$ is invertible, so $P_\theta\succ0$.
Direct multiplication also gives \eqref{eq:modal-rotation}.
The matrix on its right is orthogonal, which proves both identities in \eqref{eq:modal-energy}.
\end{proof}

In the coordinates $\eta_t=R_\theta\xi_t$, each step is a rotation by $\theta$, and $E_t=\|\eta_t\|^2$.
Thus the positive level sets of $\xi^TP_\theta\xi$ are ellipses that become circles under this change of coordinates.
For the state before rescaling,
\[
 z_t=\begin{bmatrix}u_t\\\sqrt q\,u_{t-1}\end{bmatrix}
       =q^{t/2}\xi_t,
\]
the quadratic form satisfies
\begin{equation}
\label{eq:modal-unscaled-energy}
 z_t^TP_\theta z_t=q^t E_0.
\end{equation}

At $\theta=0$ or $\theta=\pi$, the matrix $P_\theta$ is singular.
The general normalized solution is $\bar u_t=(a+bt)(\pm1)^t$, with $+$ at $\theta=0$ and $-$ at $\theta=\pi$.
The geometric components used in our construction have $b=0$, so one scalar coordinate suffices and $\bar u_t^2$ is constant.
When $b\neq0$, the normalized state is unbounded.
Hence the full two-dimensional recurrence has no positive-definite invariant quadratic form at these angles.

For $0<\theta<\pi$ and any $\widehat q>q$, set $\widehat\xi_t=\widehat q^{-t/2}z_t$.
Then $\widehat\xi_{t+1}=\sqrt{q/\widehat q}\,A_\theta\widehat\xi_t$, and \eqref{eq:modal-energy} gives
\[
 \widehat\xi_{t+1}^TP_\theta\widehat\xi_{t+1}
 =\frac{q}{\widehat q}\,
   \widehat\xi_t^TP_\theta\widehat\xi_t.
\]
The same matrix $P_\theta$ therefore gives a strict decrease whenever $\widehat\xi_t\neq0$.

For a finitely supported optimal angular measure with $\qopt(s)>q_{\mathrm{quad}}(s)$, place one $P_{\theta_j}$ block for each angle in $(0,\pi)$ and a scalar block $1$ for each angle equal to $0$ or $\pi$ on the diagonal.
Two successive values of one nonzero real coordinate determine each oscillatory component.
The resulting positive quadratic form is therefore defined on the finite-dimensional geometric trajectory and is constant after removing its contraction.
Lemma~\ref{lem:initialization-splice} realizes a scaled copy of these components from iteration one onward and adds a coordinate to $x_t$ that is zero for $t\geq2$, so it does not change the asymptotic factor.
The added query coordinate at iteration two need not be zero.
The invariant applies to successive values of the prescribed components, not to the full trajectory starting at $x_{-1}=x_0$.

\subsection{Equality conditions at the optimal factor}

For every $q\in(\qopt(s),1)$, the upper-bound proof uses a nonnegative combination of finitely many cycle costs that is strictly negative at every angle.
It does not establish a finite combination that is nonpositive at $q=\qopt(s)$.
We now describe the equality conditions under this additional assumption.

Throughout this subsection, set $q=\qopt(s)$ and assume $q>q_{\mathrm{quad}}(s)$.
Suppose finitely many cycles and nonnegative coefficients, not all zero, give
\[
 \phi_\star(\theta)=\sum_\gamma w_\gamma c_\gamma(s,q,\theta)\leq0,
 \qquad 0\leq\theta\leq\pi.
\]
Construct $\omega_\star$ and $M_\star$ from these delayed cycle comparisons as above, so $\Pi_\star=\omega_\star^*M_\star\omega_\star$.
At this rate, the linear map from the rescaled transformed gradients to those delayed variables is
\[
 H_\star(z)=\omega_\star(z)
 \begin{bmatrix}\widehat g(z)I_d\\I_d\end{bmatrix},
 \qquad \widehat g(z)=p_s(\sqrt q\,z)+c.
\]
Choose a minimal realization $(A,B,C,D)$ of $H_\star$.
It is stable because $q>q_{\mathrm{quad}}(s)$.
Define $K_\star(P)$ by the matrix expression in \eqref{eq:kyp-lmi} for this realization, with $M_\star$ in place of $\widehat M$.

\begin{proposition}[Support and equality conditions at the optimal factor]
\label{prop:kyp-equality-frequencies}
If $\nu_\star$ is feasible at $q$, then
\begin{equation}
\label{eq:contact-support}
 \operatorname{supp}\nu_\star
 \subseteq\{\theta\in[0,\pi]:\phi_\star(\theta)=0\}.
\end{equation}
Assume in addition that a symmetric $P=P^T$ satisfies $K_\star(P)\preceq0$.
For every $\theta\in[0,\pi]$ with $\phi_\star(\theta)=0$ and every nonzero complex input vector $a$, define
\[
 h_\theta(a)=
 \begin{bmatrix}
  (e^{i\theta}I-A)^{-1}Ba\\a
 \end{bmatrix}.
\]
Then
\begin{equation}
\label{eq:contact-kyp-kernel}
 K_\star(P)h_\theta(a)=0.
\end{equation}
\end{proposition}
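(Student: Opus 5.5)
The plan has two independent parts, matching the two conclusions.

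\emph{Support inclusion.} We show $\int\phi_\star\,d\nu_\star\geq0$; since $\phi_\star\leq0$ is continuous, this forces $\phi_\star=0$ on $\operatorname{supp}\nu_\star$, giving \eqref{eq:contact-support}. Because $\nu_\star$ is feasible at $q$, Lemma~\ref{lem:stars} says $\nu_\star$ satisfies every cycle inequality, so $\int c_\gamma\,d\nu_\star=\bar c_\gamma(\nu_\star)\geq0$ for each cycle. Linearity and $w_\gamma\geq0$ then give $\int\phi_\star\,d\nu_\star\geq0$.

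\emph{Kernel condition.} The plan is to evaluate the quadratic form $K_\star(P)$ on the steady-state vector $h_\theta(a)$, show it equals zero, and then invoke negative semidefiniteness. Set $z=e^{i\theta}$ and $\xi=(zI-A)^{-1}Ba$, so that $z\xi=A\xi+Ba$. This is the periodic (frequency-$\theta$) solution of the realization, and the output is $C\xi+Da=H_\star(z)a$.

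Expanding the Hermitian form as in \eqref{eq:kyp-exact-balance} gives
\[
h^*K_\star(P)h=(A\xi+Ba)^*P(A\xi+Ba)-\xi^*P\xi+(H_\star a)^*M_\star H_\star a .
\]
Since $|z|=1$, the first two terms cancel: $|z|^2\xi^*P\xi-\xi^*P\xi=0$. The last term equals $a^*[\widehat g I;I]^*\Pi_\star(z)[\widehat g I;I]a$ by $\Pi_\star=\omega_\star^*M_\star\omega_\star$.

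As in \eqref{eq:strict-cycle-fdi} and \eqref{eq:normalization-inner-product}, this scalar multiplier equals $\operatorname{Re}[\overline{p_s(\sqrt q z)}k_\star(z)]=|p_s|^2\phi_\star(\theta)$. Here we must check that the unit-circle formula with $\widehat g=p_s+c$ reproduces exactly this expression, including the $-c\operatorname{Re}k$ entry. The result is $\|a\|^2|p_s|^2\phi_\star(\theta)$, which vanishes at contact angles. For angles in $[-\pi,0]$ we use conjugation symmetry, but only $\theta\in[0,\pi]$ is needed.

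So $h^*K_\star h=0$ with $K_\star\preceq0$ Hermitian (real symmetric acting on complex vectors). Writing $-K_\star=S^*S$ gives $Sh=0$, hence $K_\star h=0$, which is \eqref{eq:contact-kyp-kernel}. The vector $h_\theta(a)$ is well defined because $A$ has spectrum inside the unit disk: the poles are only $0$ and $(1-1/s)/\sqrt q<1$, since $q>q_{\mathrm{quad}}(s)$.

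\emph{Main obstacle.} The main obstacle is the bookkeeping identity that $\omega_\star$ with the factors $q^{n_j/2}$ and the common shifts reproduces $k_\star(z)$, so that the frequency-domain form is exactly $|p_s|^2\phi_\star$. We handle this by the same substitution used in proving Proposition~\ref{prop:strict-separator-rate}, applied with $\phi_\star$ in place of the strict separator.
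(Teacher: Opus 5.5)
Your proposal is correct and follows the paper's proof essentially step for step. Feasibility together with $w_\gamma\geq0$ gives $\int\phi_\star\,d\nu_\star\geq0$, and continuity with $\phi_\star\leq0$ then forces $\phi_\star=0$ on the support. At $h_\theta(a)$ the state term vanishes because $|e^{i\theta}|=1$, leaving $|p_s(\sqrt q\,e^{i\theta})|^2\phi_\star(\theta)\|a\|^2=0$, so $K_\star(P)\preceq0$ places $h_\theta(a)$ in the kernel. Your explicit check that $[\widehat g;1]^*\Pi_\star[\widehat g;1]=\operatorname{Re}[\overline{p_s}\,k_\star]$, where the $-c\operatorname{Re}k_\star$ entry absorbs the real shift $c$ in $\widehat g=p_s+c$, is a detail the paper leaves implicit when it appeals to the same cycle calculation as in \eqref{eq:strict-cycle-fdi}.
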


\begin{proof}
Feasibility says that the average of every cycle cost is nonnegative, so the average of $\phi_\star$ is nonnegative as well.
If $\nu_\star$ has finite support, as in Lemma~\ref{lem:finite-optimal-support}, write $\nu_\star=\sum_{j=1}^m a_j\delta_{\theta_j}$ with $a_j>0$.
Then
\[
 0\leq\sum_{j=1}^m a_j\phi_\star(\theta_j)\leq0.
\]
Every summand is nonpositive, so $\phi_\star(\theta_j)=0$ for every $j$.
For a general feasible measure, we likewise have $0\leq\int\phi_\star\,d\nu_\star\leq0$.
If $\phi_\star$ were negative at a point in its support, continuity would give a neighborhood of positive measure on which it is bounded above by a negative number.
Since $\phi_\star\leq0$ everywhere, this would make its integral negative.
Thus \eqref{eq:contact-support} holds.

For the matrix condition, set $x_\theta=(e^{i\theta}I-A)^{-1}Ba$.
Then $Ax_\theta+Ba=e^{i\theta}x_\theta$ and $Cx_\theta+Da=H_\star(e^{i\theta})a$.
In the quadratic form defined by $K_\star(P)$, the state contribution is $(|e^{i\theta}|^2-1)x_\theta^*Px_\theta=0$.
The same cycle calculation as in \eqref{eq:strict-cycle-fdi} therefore gives
\[
 \begin{split}
 h_\theta(a)^*K_\star(P)h_\theta(a)
 &=a^*H_\star(e^{i\theta})^*M_\star H_\star(e^{i\theta})a\\
 &=|p_s(\sqrt q\,e^{i\theta})|^2
   \phi_\star(\theta)\|a\|^2=0.
 \end{split}
\]
Since $-K_\star(P)$ is positive semidefinite, a zero quadratic value implies that $h_\theta(a)$ lies in its kernel.
This proves \eqref{eq:contact-kyp-kernel}.
\end{proof}

Condition~\eqref{eq:contact-support} restricts the support to zeros of $\phi_\star$.
Equation~\eqref{eq:contact-kyp-kernel} identifies the corresponding vectors in the kernel of $K_\star(P)$.
If $P\succ0$, the quadratic form $V_P$ is a squared norm, but positive definiteness is not asserted here.
If $\phi_\star$ has a zero, a strict inequality $K_\star(P)\prec0$ cannot hold for the realization in this proposition.

The explicit invariant $P_\theta$ acts on two consecutive values of one component.
The matrix $P$ in this subsection acts on the state of the linear recurrence for $H_\star$, whose output consists of the variables selected by $\omega_\star$.

\section{Exact characterization of \texorpdfstring{$\qopt$}{q*} for \texorpdfstring{$\kappa = 10$}{kappa = 10}}
\label{app:kappa-ten}

We explain how the interpolation inequalities determine the algebraic data
for the planar example at $\kappa=10$. For completeness we also provide full polynomial coefficients further below. Throughout, we identify $\mathbb C$ with $\reals^2$, so
$\langle a,b\rangle=\operatorname{Re}(\bar a b)$ and $\|a\|=|a|$.
The momentum parameter is
\[
 \beta=\beta_{10}=\frac{\sqrt{10}-1}{\sqrt{10}+1}
 =\frac{11-2\sqrt{10}}9.
\]

\paragraph{Candidate query points, gradients, and function values.}
Let $z=\sqrt q\,e^{i\theta}$, where $0<q<1$, and put
$\chi=z+\bar z=2\sqrt q\cos\theta$.
Thus $q$ is the squared contraction factor, while the real coordinate $\chi$
encodes the rotation angle. Define the query and gradient amplitudes
\[
 e\doteq e(z)=1+\beta-\beta z^{-1},\qquad g\doteq g(z)=10(e-z).
\]
These expressions come directly from the momentum and gradient steps on a
geometric trajectory; in particular, $e-g/10=z$.
We first consider only its contracting part, for which we prescribe
\[
 \widehat y_k\doteq z^k e,\qquad
 \widehat g_k\doteq z^k g,\qquad
 \widehat f_k\doteq q^kF,\qquad k\geq0,
\]
where $F$ is an unknown scalar function value.
The hats denote quantities for the contracting part of the trajectory.
We assign them indices $k+1$ after adding a separate initial query.
We must still verify that the prescribed points, gradients, and function values come from one function in $\Fclass{10}$.

\paragraph{The inequalities producing the polynomial.}
For $\kappa=10$, the interpolation condition \eqref{eq:ssc-interpolation}
reads
\[
 f_i-f_j\geq\langle g_j,y_i-y_j\rangle
 +\frac1{20}\|g_i-g_j\|^2
 +\frac59\left\|y_i-y_j-\frac{g_i-g_j}{10}\right\|^2.
\]
To substitute these expressions into the interpolation condition, define
\[
 B\doteq B(q,z)=\frac{|g|^2}{20}+\frac{5q}{9},
\]
and, for each integer $m\geq1$, we have to compute the interpolation inequalities for $(i,j) = (0,m)$ and $(i,j) = (m,0)$. To this end observe that for the case $(i,j) = (0,m)$ we have $g_j = z^m g$ and $y_i - y_j = (1-z^m)e$ and for $(i,j) = (m,0)$ we have $g_j = g$ and $y_i - y_j = (z^m - 1)e$. Further for the first configuration we have $g_i - g_j = (1-z^m)g$ and $y_i - y_j = (1-z^m)e$, so that using $e-g/10=z$ we obtain  $y_i-y_j-\frac{g_i-g_j}{10} = (1-z^m) (e- \frac{g}{10}) = (1-z^m) z$, so that 
$$\frac1{20}\|g_i-g_j\|^2 +\frac59\left\|y_i-y_j-\frac{g_i-g_j}{10}\right\|^2 = |1-z^m|^2 B,$$
follows; we obtain the same quadratic term when expanding for the second case. With this we arrive at the right hand sides for the two interpolation inequalities 
\begin{align}
 C_m^+(q,z)&\doteq\langle z^m g,(1-z^m)e\rangle+|1-z^m|^2B,\notag\\
 C_m^-(q,z)&\doteq\langle g,(z^m-1)e\rangle+|1-z^m|^2B,
\label{eq:k10-interpolation-terms}
\end{align}
and we obtain the bounds
\[
 C_m^+(q,z) \leq (1-q^m)F \leq  -  C_m^-(q,z)
\]
Shifting both indices by $k$ turns $(0,m)$ and $(m,0)$ into
$(k,k+m)$ and $(k+m,k)$, respectively, and multiplies either inequality by
the positive factor $q^k$. The factor $1-q^m$ is then included in the
difference of function values:
\[
 \widehat f_k-\widehat f_{k+m}
 =q^kF-q^{k+m}F=q^k(1-q^m)F.
\]

We now select two different members of this family: the forward inequality
with $m=1$ and the reverse inequality with $m=18$. These are the bounds that
are active (hold with equality) at the optimal $\kappa = 10$ example; the
choice of eighteen is specific to this solution. Taking $m=1$ in the forward inequality and shifting
by $k=0,\ldots,17$ gives the consecutive comparisons
$(0,1),(1,2),\ldots,(17,18)$. Taking $m=18$ in the reverse inequality gives
$(18,0)$, which closes this chain. The particular inequalities we sum are
\begin{align}
 \widehat f_k-\widehat f_{k+1}&\geq q^k C_1^+(q,z),
       &&k=0,\ldots,17,\notag\\
 \widehat f_{18}-\widehat f_0&\geq C_{18}^-(q,z).
\label{eq:k10-active-cycle}
\end{align}
After summing we obtain for the left-hand sides the two terms:
\[
\begin{aligned}
 \sum_{k=0}^{17}(\widehat f_k-\widehat f_{k+1})
 &=(1-q)F\sum_{k=0}^{17}q^k=(1-q^{18})F,\\
 \widehat f_{18}-\widehat f_0&=-(1-q^{18})F.
\end{aligned}
\]
Thus $F$ is cancelled out from the summed inequalities.
The first eighteen comparisons go from index $0$ to index $18$, and the
last returns to index $0$, i.e., a cycle of interpolation comparisons.
Consequently, with $H_{18}(q)\doteq\sum_{k=0}^{17}q^k$, feasibility requires
\[
 W_{18}(q,z)\doteq H_{18}(q)C_1^+(q,z)+C_{18}^-(q,z)\leq0.
\]
Equivalently, the two active comparisons bound the same function value:
\[
 \frac{C_1^+(q,z)}{1-q}\leq F
 \leq-\frac{C_{18}^-(q,z)}{1-q^{18}}.
\]
When both are equalities, $W_{18}(q,z)=0$ and these bounds coincide.

To express $W_{18}$ using only the real quantities $q$ and $\chi$, introduce
the sums of conjugate powers
\[
 T_j\doteq z^j+\bar z^j,\qquad j\geq0.
\]
Since $z+\bar z=\chi$ and $z\bar z=q$, multiplying out gives
$\chi T_j=T_{j+1}+qT_{j-1}$ for $j\geq1$. Thus these sums satisfy
\[
 T_0=2,\qquad T_1=\chi,\qquad
 T_{j+1}=\chi T_j-qT_{j-1},\quad j\geq1.
\]
This recurrence expresses each $T_j$ as a polynomial in $(q,\chi)$; for
example, $T_2=\chi^2-2q$ and $T_3=\chi^3-3q\chi$. We use these polynomials to replace the complex powers in
\eqref{eq:k10-interpolation-terms}. In particular, the common quadratic
factor becomes
\[
 |1-z^m|^2=1+q^m-(z^m+\bar z^m)=1+q^m-T_m(q,\chi).
\]
For the inner-product terms, substitute $\bar z=q/z$ and expand into real
parts of powers of $z$, then use
\[
 \operatorname{Re}(z^j)=\frac{T_j(q,\chi)}2,\qquad
 \operatorname{Re}(z^{-j})=\frac{T_j(q,\chi)}{2q^j},\qquad j\geq1.
\]
Applying these replacements to $C_1^+(q,z)$ and $C_{18}^-(q,z)$ eliminates
$z$ and $\bar z$ from their combination $W_{18}(q,z)$. Simplifying the
resulting rational expression in $(q,\chi)$ and factoring gives the exact identity
\begin{equation}
\label{eq:k10-cycle-polynomial}
 W_{18}(q,z)=\frac{1+q-\chi}{9q}P(q,\chi),
\end{equation}
via $|z|^2 = q$ and $\chi = z + \bar z$.
Here $P$ is a polynomial with real coefficients of total degree $19$ in the two real variables $q,\chi$; in fact $P \in \mathbb Q(\sqrt{10})[q,\chi] \subseteq \mathbb R[q,\chi]$ and its complete coefficient table appears at the end of this section.
The factor multiplying $P$ is positive, since
$1+q-\chi=|1-z|^2\geq(1-\sqrt q)^2>0$.
Thus this cycle requires $P(q,\chi)\leq0$, and its active boundary is at equality, i.e., when $P(q,\chi)=0$; we call $P(q,\chi)$ the \emph{active-cycle polynomial}.

\paragraph{Selecting an exact candidate.}
We seek a value of $q$ for which the graph of $\chi\mapsto P(q,\chi)$ touches
the horizontal axis at a minimum strictly between the endpoints
$-2\sqrt q$ and $2\sqrt q$. Since $\chi=2\sqrt q\cos\theta$, this means that
the trajectory's rotation angle lies strictly between $0^\circ$ and $180^\circ$.
At such a minimum, both the polynomial and its derivative vanish. We therefore
look for candidate pairs satisfying
\begin{equation}
\label{eq:k10-tangency}
 P(q,\chi)=0,\qquad \partial_\chi P(q,\chi)=0.
\end{equation}
These are necessary conditions, not a guarantee that the graph has the desired
shape. The box below selects one candidate, denoted $(q_{10},\chi_\star)$;
we then establish that it gives the required minimum and, separately, the
optimal rate. In particular, the identification $q_{10}=\qopt(\sqrt{10})$ remains to be established by a
realizable trajectory and a matching upper bound. Specifically, our pair is the unique solution in the rational box
\begin{equation}
\label{eq:k10-root-box}
\begin{aligned}
 |q-0.5644232779728935119106424484621701711426|&<10^{-40},\\
 |\chi-1.4532011129657160483729421248197410663214|&<10^{-40}.
\end{aligned}
\end{equation}
The decimal centers here are exact rational numbers defining the box. Directed interval arithmetic certifies that the Jacobian of $(P,\partial_\chi P)$ is nonsingular throughout this box and that an interval Newton image lies strictly inside it, establishing existence and uniqueness.
The computation also establishes $q_{\mathrm{quad}}(\sqrt{10})<q_{10}<1-1/\sqrt{10}$ and
$\chi_\star^2<4q_{10}$. The latter places $\chi_\star$ strictly inside the
allowed interval $[-2\sqrt{q_{10}},2\sqrt{q_{10}}]$. Since
$\chi=2\sqrt q\cos\theta$, this corresponds to an angle strictly between
$0$ and $\pi$, rather than either real-mode endpoint. This establishes the
interiority of the candidate.

To show that the stationary zero actually touches zero rather than crosses
it, we use the exact polynomial factorization
\begin{equation}
\label{eq:k10-contact-factorization}
 P(q_{10},\chi)=(\chi-\chi_\star)^2R(\chi),
\end{equation}
where $R$ has degree $17$. Interval bounds on $64$ rational subintervals
certify $R(\chi)>17$ for $|\chi|\leq1.503$, an interval containing
$[-2\sqrt{q_{10}},2\sqrt{q_{10}}]$.
Hence $P(q_{10},\chi)$ is nonnegative throughout the allowed frequency
interval and vanishes only at $\chi_\star$: the curve genuinely touches zero
at this interior point; see Figure~\ref{fig:k10-tangency} for an illustration. This establishes the contact for the exact candidate
$q_{10}$, but does not yet show that $q_{10}=\qopt(\sqrt{10})$. The remaining
interpolation conditions and the matching upper bound below are essential.

\begin{figure}[tbp]
\centering
\includegraphics[width=\linewidth]{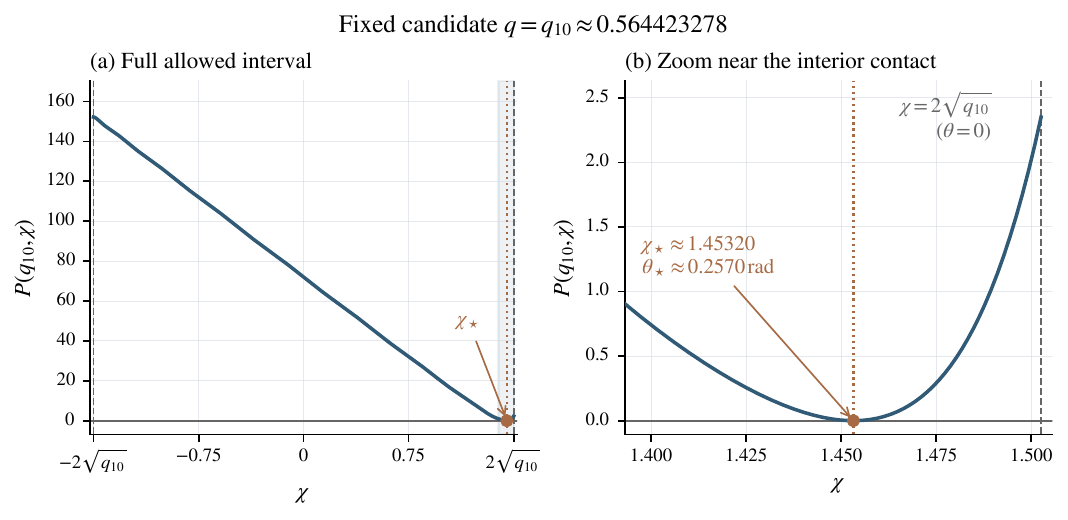}
\caption{Numerical illustration of $\chi\mapsto P(q_{10},\chi)$
at the (algebraic) candidate $q_{10}$.
Left: the full allowed interval $[-2\sqrt{q_{10}},2\sqrt{q_{10}}]$, with the
shaded region enlarged on the right.
Right: the contact at $\chi_\star$ is strictly inside the interval, away from
the real-mode endpoint $\chi=2\sqrt{q_{10}}$ ($\theta=0$). The marked angle is
$\theta_\star=\arccos(\chi_\star/(2\sqrt{q_{10}}))$.
Here $P$ comes from one active comparison cycle, whereas
Figures~\ref{fig:k100-tangency} and~\ref{fig:radical-tangency} plot the
weighted dual functions $\Phi_{100}$ and $\Phi_\dagger$ combining several
interpolation constraints. All three use the real frequency coordinate $\chi$.}
\label{fig:k10-tangency}
\end{figure}

\paragraph{The trajectory with the standard initialization realizing $q_{10}$.} We now show that $q_{10}$ is realizable, i.e., we construct a trajectory that contracts with a factor $q_{10}$ and thus $\qopt(\sqrt{10}) \geq q_{10}$.
Unlike the general construction in Lemma~\ref{lem:initialization-splice},
the construction below satisfies $x_{-1}=x_0$ in $\reals^2$ without adding a coordinate.
Set
\[
 z_\star\doteq\frac{\chi_\star}{2}
 +i\sqrt{q_{10}-\frac{\chi_\star^2}{4}},
 \qquad
 e_\star\doteq1+\beta_{10}-\beta_{10}z_\star^{-1},
 \qquad
 g_\star\doteq10(e_\star-z_\star).
\]
The active forward equality explicitly determines the function-value amplitude:
\begin{equation}
\label{eq:k10-function-amplitude}
 F_\star\doteq\frac{C_1^+(q_{10},z_\star)}{1-q_{10}}
 =0.6356563261206633998\ldots.
\end{equation}
We translate coordinates to place the minimizer at the origin and subtract the minimum value from the objective. The full query points,
gradients, and function values are
\begin{equation}
\label{eq:k10-trajectory-data}
\begin{aligned}
 y_0&=z_\star^{-1},
 &g_0&=10(z_\star^{-1}-1),
 &f_0&=\frac52,\\
 y_{t+1}&=z_\star^t e_\star,
 &g_{t+1}&=z_\star^t g_\star,
 &f_{t+1}&=q_{10}^tF_\star,
 \qquad t\geq0.
\end{aligned}
\end{equation}
In particular, the initial value $f_0=5/2$ is different from the amplitude $F_\star$ of the function values along the contracting part of the trajectory.
Starting with $x_{-1}=x_0=z_\star^{-1}$, the first gradient step gives
$x_1=y_0-g_0/10=1$. Thereafter $e_\star-g_\star/10=z_\star$ makes the
Nesterov identities give $x_t=z_\star^{t-1}$ for every $t\geq0$.
Consequently the normalized squared distance is exactly $q_{10}^t$, as in
\eqref{eq:exact-spiral}, including at $t=0$.

\paragraph{Checking that one objective realizes the trajectory.}
We still have to show that the prescribed points, gradients, and function
values come from one function in $\Fclass{10}$. For the contracting part,
this means checking precisely the bounds already derived:
\begin{equation}
  \label{eq:residual-checks}
C_m^+(q_{10},z_\star)\leq(1-q_{10}^m)F_\star
 \leq-C_m^-(q_{10},z_\star),\qquad m\geq1.  
\end{equation}
Equality in the left inequality at $m=1$ follows from the definition of
$F_\star$ in \eqref{eq:k10-function-amplitude}. Equality in the right
inequality at $m=18$ follows from the cycle identity
\eqref{eq:k10-cycle-polynomial} and $P(q_{10},\chi_\star)=0$; this is an exact identity. Outward-rounded interval arithmetic checks both inequalities for
$m=1,\ldots,400$. Apart from the two equalities for $m=1$ and $m=18$, each checked
inequality has a margin greater than $3\cdot10^{-6}$, where the margin is
the right-hand side minus the left-hand side. This finite check alone does
not cover all iteration indices. To handle the remaining ones, observe that
$z_\star^m \to 0$ and $q_{10}^m \to 0$, so the bounds in \eqref{eq:residual-checks} tend to
\begin{equation}
\label{eq:k10-minimizer-checks}
 B(q_{10},z_\star)\leq F_\star
 \leq\langle g_\star,e_\star\rangle-B(q_{10},z_\star).
\end{equation}
Both limiting inequalities also have margins greater than $3\cdot10^{-6}$.
These are the two interpolation inequalities for $(e_\star,g_\star,F_\star)$ and $(0,0,0)$, corresponding to the query point $e_\star$ and the optimum.
Expanding the formulas for $C_m^\pm$ and using the triangle inequality shows
that each finite-index margin differs from its limiting margin by at most
\[
 \left(F_\star+2|g_\star||e_\star|+3B(q_{10},z_\star)\right)q_{10}^{m/2}.
\]
This quantity is less than $10^{-49}$ for every $m\geq401$, so both
inequalities remain strict at all remaining indices.

The interpolation inequalities for all distinct pairs are covered by three
cases:
\begin{enumerate}
\item \emph{Two contracting queries.} For $1\leq i<j$, set $m=j-i$ and
multiply \eqref{eq:residual-checks} by $q_{10}^{i-1}$. Using the data in
\eqref{eq:k10-trajectory-data}, these are exactly the interpolation
inequalities \eqref{eq:ssc-interpolation} for the ordered pairs $(i,j)$ and
$(j,i)$.
\item \emph{A contracting query and the minimizer.} For query $i\geq1$,
multiply \eqref{eq:k10-minimizer-checks} by $q_{10}^{i-1}$ to obtain both
ordered comparisons with the minimizer's data $(0,0,0)$.
\item \emph{The initial query with index $0$.} Substitute
\eqref{eq:k10-trajectory-data} into \eqref{eq:ssc-interpolation} for $(0,j)$
and $(j,0)$ with $j\geq1$, and for both comparisons between the initial
query and the minimizer. The infinitely many comparisons with contracting
queries are covered by finite interval checks and geometric bounds for the
remaining indices.
\end{enumerate}
Together with interpolation, this establishes \emph{one objective} realizing the
entire trajectory, not a different objective for each horizon. Its normalized
squared distance is $q_{10}^t$, which gives the lower bound
$\qopt(\sqrt{10})\geq q_{10}$.

\paragraph{Ruling out larger asymptotic factors.}
The upper bound must apply to every admissible objective, not just the one
constructed above. We obtain it from the same polynomial $P$ and
Proposition~\ref{prop:strict-separator-rate}. That proposition applies when
a cycle has a strictly negative cost at every angle. We will establish this
for the cycle $\gamma=(0,1,\ldots,18,0)$ from
\eqref{eq:k10-active-cycle}, with $q>q_{10}$ sufficiently close to $q_{10}$.

First we relate $W_{18}$ to the cycle cost used in that proposition.
The change of variables \eqref{eq:interpolation-variables}--\eqref{eq:phi}
gives, at $\kappa=10$,
\[
 \varphi_i+f_i=\langle g_i,y_i\rangle-\frac12\|y_i\|^2
 -\frac1{18}\|g_i-y_i\|^2.
\]
Substituting in \eqref{eq:ssc-interpolation} converts its lower bounds on
$f_i-f_j$ into upper bounds on $\varphi_i-\varphi_j$.
The additional terms cancel when summed around the cycle, reversing the sign of the summed costs.
There is also a scaling factor.
For the contracting trajectory defining $C_m^\pm$, we have $e-(g-e)/9=10z/9$, so its $r$-sequence is $(10z/9)z^k$, rather than $z^k$ in \eqref{eq:nag-transfer}.
The inner products are therefore multiplied by $|10z/9|^2=100q/81$.
Thus the cycle cost from Lemma~\ref{lem:strict-alternative} satisfies
\begin{equation}
\label{eq:k10-separator-cycle}
 c_\gamma(\sqrt{10},q,\theta)
 =-\frac{81}{100q}W_{18}(q,z)
 =-\frac{9(1+q-\chi)}{100q^2}P(q,\chi),
 \qquad \chi=2\sqrt q\cos\theta.
\end{equation}
The last equality uses \eqref{eq:k10-cycle-polynomial}. Since $1+q-\chi>0$,
it remains to show that $P(q,\chi)>0$ throughout the allowed interval for
$q$ just above $q_{10}$, not merely at one angle.

Using the coefficients in Table~\ref{tab:k10-polynomial}, we differentiate
$P(q,\chi)$ in \eqref{eq:k10-expanded-polynomial} with respect to $q$ and
evaluate the derivative with outward-rounded intervals on
\eqref{eq:k10-root-box}, obtaining
\begin{equation}
\label{eq:k10-q-derivative}
 114<\partial_qP(q_{10},\chi_\star)<115.
\end{equation}
By continuity, $\partial_qP(q,\chi)>0$ near $(q_{10},\chi_\star)$.
For $q>q_{10}$ and $\chi$ in this neighborhood,
\[
 P(q,\chi)=P(q_{10},\chi)
 +\int_{q_{10}}^q\partial_qP(u,\chi)\,du>0,
\]
because \eqref{eq:k10-contact-factorization} makes the first term
nonnegative. Away from a fixed small neighborhood of $\chi_\star$, that
factorization and $R>17$ give a strictly positive minimum on the remaining
closed subset of $[-1.503,1.503]$. Continuity preserves this positivity for
$q$ sufficiently close to $q_{10}$. Moreover, $2\sqrt q<1.503$ for such $q$,
so this fixed interval still contains the entire allowed interval.
Consequently there exists $\varepsilon>0$, with $q_{10}+\varepsilon<1$,
such that
\begin{equation}
\label{eq:k10-strict-polynomial}
 P(q,\chi)>0
 \quad\text{whenever}\quad
 q_{10}<q<q_{10}+\varepsilon,
 \qquad |\chi|\leq2\sqrt q.
\end{equation}

For each such $q$, \eqref{eq:k10-separator-cycle} is strictly negative at
every angle, so this single cycle satisfies the strict alternative in
Lemma~\ref{lem:strict-alternative}. Since $q>q_{10}>q_{\mathrm{quad}}(\sqrt{10})$,
Proposition~\ref{prop:strict-separator-rate} now gives
\begin{equation}
\label{eq:k10-uniform-upper}
 \|x_t-x_\star\|^2\leq b(\sqrt{10},q)q^t\|x_0-x_\star\|^2,
 \qquad t\geq0,
\end{equation}
for every finite-dimensional $f\in\Fclass{10}$ with the standard
initialization. The finite constant depends only on $q$ and the fixed
condition number, not on $f$, the dimension, $x_0$, or $t$.
For $x_0\ne x_\star$ and $t\geq1$, \eqref{eq:k10-uniform-upper} gives
\[
 R_t(f,x_0)^{1/t}\leq b(\sqrt{10},q)^{1/t}q.
\]
The same bound holds after taking the supremum over objectives, dimensions,
and starting points. Letting $t\to\infty$ therefore bounds both rates in
\eqref{eq:rho-fix}--\eqref{eq:rho-hor} by $q$. This holds for every
$q\in(q_{10},q_{10}+\varepsilon)$; letting $q\downarrow q_{10}$ gives the
upper bound $q_{10}$. Combining it with the realizing trajectory and
Theorem~\ref{thm:main} proves
\[
 \rhofix(10)=\rhohor(10)=\qopt(\sqrt{10})=q_{10}.
\]
The limit in $q$ is taken after the asymptotic limit in $t$; no bound on
$b(\sqrt{10},q)$ as $q\downarrow q_{10}$ is needed. The tangency equations
alone are insufficient: the positive factorization, the derivative bound
\eqref{eq:k10-q-derivative}, and the uniform-rate proposition are the
additional ingredients of the upper bound.

\paragraph{Full polynomial coefficients.}
To specify the polynomial in \eqref{eq:k10-cycle-polynomial} without any
external data, write
\begin{equation}
\label{eq:k10-expanded-polynomial}
 P(q,\chi)=5\sum_{j=0}^{19}\sum_{k=0}^{19-j}
 (a_{jk}+b_{jk}\sqrt{10})q^k\chi^j.
\end{equation}
The table below lists all $210$ integer pairs $(a_{jk},b_{jk})$, including
the zero coefficients. The common factor $5$ is part of this normalization.
For example, the terms with the two highest powers of $\chi$ are
\[
 5(11-2\sqrt{10})\chi^{19}
 +5(11-2\sqrt{10}-9q)\chi^{18}.
\]
For the full set of values see the table.
\begingroup
\small
\setlength{\tabcolsep}{3pt}
\begin{longtable}{@{}rrrr@{\qquad}rrrr@{\qquad}rrrr@{}}
\caption{Complete coefficient table for \eqref{eq:k10-expanded-polynomial}.
The coefficient of $q^k\chi^j$ is $5(a_{jk}+b_{jk}\sqrt{10})$.
Read each of the three blocks from top to bottom. All other coefficients are zero.}
\label{tab:k10-polynomial}\\
\toprule
$j$ & $k$ & $a_{jk}$ & $b_{jk}$ & $j$ & $k$ & $a_{jk}$ & $b_{jk}$ & $j$ & $k$ & $a_{jk}$ & $b_{jk}$ \\
\midrule
\endfirsthead
\toprule
$j$ & $k$ & $a_{jk}$ & $b_{jk}$ & $j$ & $k$ & $a_{jk}$ & $b_{jk}$ & $j$ & $k$ & $a_{jk}$ & $b_{jk}$ \\
\midrule
\endhead
\endfoot
\bottomrule
\endlastfoot
$19$ & $0$ & $11$ & $-2$ & $8$ & $4$ & $9195$ & $-1320$ & $3$ & $4$ & $885$ & $-110$ \\
$18$ & $0$ & $11$ & $-2$ & $8$ & $5$ & $-25707$ & $3564$ & $3$ & $5$ & $-1516$ & $182$ \\
$18$ & $1$ & $-9$ & $0$ & $8$ & $6$ & $16533$ & $0$ & $3$ & $6$ & $2394$ & $-280$ \\
$17$ & $0$ & $11$ & $-2$ & $8$ & $7$ & $495$ & $0$ & $3$ & $7$ & $-3560$ & $408$ \\
$17$ & $1$ & $-218$ & $38$ & $8$ & $8$ & $-165$ & $0$ & $3$ & $8$ & $5055$ & $-570$ \\
$17$ & $2$ & $1$ & $0$ & $8$ & $9$ & $45$ & $0$ & $3$ & $9$ & $-120$ & $0$ \\
$16$ & $0$ & $11$ & $-2$ & $8$ & $10$ & $-9$ & $0$ & $3$ & $10$ & $84$ & $0$ \\
$16$ & $1$ & $-207$ & $36$ & $8$ & $11$ & $1$ & $0$ & $3$ & $11$ & $-56$ & $0$ \\
$16$ & $2$ & $163$ & $0$ & $7$ & $0$ & $11$ & $-2$ & $3$ & $12$ & $35$ & $0$ \\
$16$ & $3$ & $1$ & $0$ & $7$ & $1$ & $-108$ & $18$ & $3$ & $13$ & $-20$ & $0$ \\
$15$ & $0$ & $11$ & $-2$ & $7$ & $2$ & $566$ & $-88$ & $3$ & $14$ & $10$ & $0$ \\
$15$ & $1$ & $-196$ & $34$ & $7$ & $3$ & $-2120$ & $312$ & $3$ & $15$ & $-4$ & $0$ \\
$15$ & $2$ & $1826$ & $-304$ & $7$ & $4$ & $6390$ & $-900$ & $3$ & $16$ & $1$ & $0$ \\
$15$ & $3$ & $-16$ & $0$ & $7$ & $5$ & $-16512$ & $2244$ & $2$ & $0$ & $11$ & $-2$ \\
$15$ & $4$ & $1$ & $0$ & $7$ & $6$ & $38016$ & $-5016$ & $2$ & $1$ & $-53$ & $8$ \\
$14$ & $0$ & $11$ & $-2$ & $7$ & $7$ & $-792$ & $0$ & $2$ & $2$ & $136$ & $-18$ \\
$14$ & $1$ & $-185$ & $32$ & $7$ & $8$ & $330$ & $0$ & $2$ & $3$ & $-260$ & $32$ \\
$14$ & $2$ & $1630$ & $-270$ & $7$ & $9$ & $-120$ & $0$ & $2$ & $4$ & $425$ & $-50$ \\
$14$ & $3$ & $-1230$ & $0$ & $7$ & $10$ & $36$ & $0$ & $2$ & $5$ & $-631$ & $72$ \\
$14$ & $4$ & $-15$ & $0$ & $7$ & $11$ & $-8$ & $0$ & $2$ & $6$ & $878$ & $-98$ \\
$14$ & $5$ & $1$ & $0$ & $7$ & $12$ & $1$ & $0$ & $2$ & $7$ & $-1166$ & $128$ \\
$13$ & $0$ & $11$ & $-2$ & $6$ & $0$ & $11$ & $-2$ & $2$ & $8$ & $1495$ & $-162$ \\
$13$ & $1$ & $-174$ & $30$ & $6$ & $1$ & $-97$ & $16$ & $2$ & $9$ & $-765$ & $0$ \\
$13$ & $2$ & $1445$ & $-238$ & $6$ & $2$ & $458$ & $-70$ & $2$ & $10$ & $-36$ & $0$ \\
$13$ & $3$ & $-8400$ & $1330$ & $6$ & $3$ & $-1554$ & $224$ & $2$ & $11$ & $28$ & $0$ \\
$13$ & $4$ & $105$ & $0$ & $6$ & $4$ & $4270$ & $-588$ & $2$ & $12$ & $-21$ & $0$ \\
$13$ & $5$ & $-14$ & $0$ & $6$ & $5$ & $-10122$ & $1344$ & $2$ & $13$ & $15$ & $0$ \\
$13$ & $6$ & $1$ & $0$ & $6$ & $6$ & $21504$ & $-2772$ & $2$ & $14$ & $-10$ & $0$ \\
$12$ & $0$ & $11$ & $-2$ & $6$ & $7$ & $-12936$ & $0$ & $2$ & $15$ & $6$ & $0$ \\
$12$ & $1$ & $-163$ & $28$ & $6$ & $8$ & $-462$ & $0$ & $2$ & $16$ & $-3$ & $0$ \\
$12$ & $2$ & $1271$ & $-208$ & $6$ & $9$ & $210$ & $0$ & $2$ & $17$ & $1$ & $0$ \\
$12$ & $3$ & $-6955$ & $1092$ & $6$ & $10$ & $-84$ & $0$ & $1$ & $0$ & $0$ & $0$ \\
$12$ & $4$ & $5005$ & $0$ & $6$ & $11$ & $28$ & $0$ & $1$ & $1$ & $-53$ & $8$ \\
$12$ & $5$ & $91$ & $0$ & $6$ & $12$ & $-7$ & $0$ & $1$ & $2$ & $72$ & $-8$ \\
$12$ & $6$ & $-13$ & $0$ & $6$ & $13$ & $1$ & $0$ & $1$ & $3$ & $-135$ & $16$ \\
$12$ & $7$ & $1$ & $0$ & $5$ & $0$ & $11$ & $-2$ & $1$ & $4$ & $154$ & $-16$ \\
$11$ & $0$ & $11$ & $-2$ & $5$ & $1$ & $-86$ & $14$ & $1$ & $5$ & $-217$ & $24$ \\
$11$ & $1$ & $-152$ & $26$ & $5$ & $2$ & $361$ & $-54$ & $1$ & $6$ & $236$ & $-24$ \\
$11$ & $2$ & $1108$ & $-180$ & $5$ & $3$ & $-1096$ & $154$ & $1$ & $7$ & $-299$ & $32$ \\
$11$ & $3$ & $-5684$ & $884$ & $5$ & $4$ & $2716$ & $-364$ & $1$ & $8$ & $318$ & $-32$ \\
$11$ & $4$ & $23075$ & $-3458$ & $5$ & $5$ & $-5852$ & $756$ & $1$ & $9$ & $-381$ & $40$ \\
$11$ & $5$ & $-364$ & $0$ & $5$ & $6$ & $11382$ & $-1428$ & $1$ & $10$ & $-2$ & $2$ \\
$11$ & $6$ & $78$ & $0$ & $5$ & $7$ & $-20472$ & $2508$ & $1$ & $11$ & $-19$ & $2$ \\
$11$ & $7$ & $-12$ & $0$ & $5$ & $8$ & $462$ & $0$ & $1$ & $12$ & $-4$ & $2$ \\
$11$ & $8$ & $1$ & $0$ & $5$ & $9$ & $-252$ & $0$ & $1$ & $13$ & $-17$ & $2$ \\
$10$ & $0$ & $11$ & $-2$ & $5$ & $10$ & $126$ & $0$ & $1$ & $14$ & $-6$ & $2$ \\
$10$ & $1$ & $-141$ & $24$ & $5$ & $11$ & $-56$ & $0$ & $1$ & $15$ & $-15$ & $2$ \\
$10$ & $2$ & $956$ & $-154$ & $5$ & $12$ & $21$ & $0$ & $1$ & $16$ & $-8$ & $2$ \\
$10$ & $3$ & $-4576$ & $704$ & $5$ & $13$ & $-6$ & $0$ & $1$ & $17$ & $-13$ & $2$ \\
$10$ & $4$ & $17391$ & $-2574$ & $5$ & $14$ & $1$ & $0$ & $1$ & $18$ & $-10$ & $2$ \\
$10$ & $5$ & $-11869$ & $0$ & $4$ & $0$ & $11$ & $-2$ & $0$ & $0$ & $0$ & $0$ \\
$10$ & $6$ & $-286$ & $0$ & $4$ & $1$ & $-75$ & $12$ & $0$ & $1$ & $-22$ & $4$ \\
$10$ & $7$ & $66$ & $0$ & $4$ & $2$ & $275$ & $-40$ & $0$ & $2$ & $60$ & $-4$ \\
$10$ & $8$ & $-11$ & $0$ & $4$ & $3$ & $-735$ & $100$ & $0$ & $3$ & $-22$ & $4$ \\
$10$ & $9$ & $1$ & $0$ & $4$ & $4$ & $1620$ & $-210$ & $0$ & $4$ & $60$ & $-4$ \\
$9$ & $0$ & $11$ & $-2$ & $4$ & $5$ & $-3136$ & $392$ & $0$ & $5$ & $-22$ & $4$ \\
$9$ & $1$ & $-130$ & $22$ & $4$ & $6$ & $5530$ & $-672$ & $0$ & $6$ & $60$ & $-4$ \\
$9$ & $2$ & $815$ & $-130$ & $4$ & $7$ & $-9090$ & $1080$ & $0$ & $7$ & $-22$ & $4$ \\
$9$ & $3$ & $-3620$ & $550$ & $4$ & $8$ & $5070$ & $0$ & $0$ & $8$ & $60$ & $-4$ \\
$9$ & $4$ & $12815$ & $-1870$ & $4$ & $9$ & $210$ & $0$ & $0$ & $9$ & $-22$ & $4$ \\
$9$ & $5$ & $-38522$ & $5434$ & $4$ & $10$ & $-126$ & $0$ & $0$ & $10$ & $38$ & $0$ \\
$9$ & $6$ & $715$ & $0$ & $4$ & $11$ & $70$ & $0$ & $0$ & $11$ & $20$ & $0$ \\
$9$ & $7$ & $-220$ & $0$ & $4$ & $12$ & $-35$ & $0$ & $0$ & $12$ & $18$ & $0$ \\
$9$ & $8$ & $55$ & $0$ & $4$ & $13$ & $15$ & $0$ & $0$ & $13$ & $20$ & $0$ \\
$9$ & $9$ & $-10$ & $0$ & $4$ & $14$ & $-5$ & $0$ & $0$ & $14$ & $18$ & $0$ \\
$9$ & $10$ & $1$ & $0$ & $4$ & $15$ & $1$ & $0$ & $0$ & $15$ & $20$ & $0$ \\
$8$ & $0$ & $11$ & $-2$ & $3$ & $0$ & $11$ & $-2$ & $0$ & $16$ & $18$ & $0$ \\
$8$ & $1$ & $-119$ & $20$ & $3$ & $1$ & $-64$ & $10$ & $0$ & $17$ & $20$ & $0$ \\
$8$ & $2$ & $685$ & $-108$ & $3$ & $2$ & $200$ & $-28$ & $0$ & $18$ & $18$ & $0$ \\
$8$ & $3$ & $-2805$ & $420$ & $3$ & $3$ & $-460$ & $60$ & $0$ & $19$ & $20$ & $0$ \\
\end{longtable}
\endgroup
 
\section{An exact two-component optimal solution at
\texorpdfstring{$\kappa=100$}{kappa=100}}
\label{app:kappa-hundred}
 The $\kappa=100$ case provides a nontrivial example of the finite-support property beyond one planar spiral. The calculation is similar to Appendix~\ref{app:kappa-ten} and we only repeat key steps.

\begin{proposition}[An optimal solution requiring two components]
\label{prop:kappa-hundred}
At $\kappa=100$,
\[
 q_{100}=\qopt(\sqrt{100})=0.859899778052548365740446035906789693\ldots .
\]
There is an optimal angular pair whose measure is supported on two frequencies,
and every optimal angular measure has exactly these two frequencies in its support.
The resulting exact self-similar tail spans four real dimensions; after
enforcing $x_{-1}=x_0$, one fixed function attaining this factor can be chosen in
$\reals^5$.
\end{proposition}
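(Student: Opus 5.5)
The plan follows the template of Appendix~\ref{app:kappa-ten}, with a weighted combination of cycles replacing the single active cycle. The certificate has a primal and a dual half. The primal side is a two-angle measure $\nu=w_1\delta_{\theta_1}+w_2\delta_{\theta_2}$ with scalar $\varphi_0$ satisfying \eqref{eq:star-feasible} at a candidate $q_{100}$. The dual side is a finite nonnegative combination of cycle costs, $\Phi_{100}(\theta)=\sum_\gamma\lambda_\gamma c_\gamma(10,q_{100},\theta)\le 0$ on $[0,\pi]$, vanishing exactly at $\theta_1,\theta_2$.

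\textbf{Finding the candidate.} First solve the finite-index relaxation numerically, using the potentials formulation before Proposition~\ref{prop:disk-finite-dual}. This identifies the active index pairs, the two support angles, and the dual multipliers. Next, pass to the real frequency coordinate $\chi=2\sqrt q\cos\theta$ and write $\Phi_{100}$, up to a positive factor, as a polynomial $\Phi(q,\chi)$, using the $T_j$ recurrence exactly as in the derivation of $P$ at $\kappa=10$. The exact candidate is then determined by an algebraic system with the following unknowns:
\begin{itemize}
\item $q$, $\chi_1$, $\chi_2$;
\item the weight $w_1$;
\item $\varphi_0$;
\item the normalized multipliers.
\end{itemize}
It imposes $\Phi(q,\chi_k)=\partial_\chi\Phi(q,\chi_k)=0$ for $k=1,2$, together with equality in the primal constraints corresponding to the active pairs (complementary slackness). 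Certify a unique solution in a small rational box by interval Newton, and check $\chi_k^2<4q$, or an endpoint if one angle is $0$ or $\pi$. Then verify double contact through a factorization $\Phi(q_{100},\chi)=(\chi-\chi_1)^2(\chi-\chi_2)^2R(\chi)$ with $R>0$ on $[-2\sqrt{q_{100}},2\sqrt{q_{100}}]$, certified by interval bounds on subintervals.

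\textbf{Lower bound.} Verify all inequalities of \eqref{eq:star-feasible} for the two-point measure. For $t\le N$ use outward-rounded interval arithmetic with explicit margins, except at the active pairs, which hold exactly by construction. For $t>N$ bound the deviation from the limiting inequalities \eqref{eq:star-origin-bounds} by a constant times $q_{100}^{t/2}$. This gives $q_{100}\in\mathcal Q(10)$. The trajectory uses two planar components, so it lives in $\reals^4$, and Lemma~\ref{lem:initialization-splice} adds one scalar coordinate to give a fixed function in $\reals^5$.

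\textbf{Upper bound.} Show that $\partial_q\Phi>0$ at both contact points. Then argue as around \eqref{eq:k10-strict-polynomial}: for $q$ slightly above $q_{100}$, the combination is strictly negative at every angle. Proposition~\ref{prop:strict-separator-rate} and Theorem~\ref{thm:main} then give $\qopt(10)=q_{100}$.

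\textbf{Support claims.} By Proposition~\ref{prop:kyp-equality-frequencies}, every optimal angular measure is supported in the zero set $\{\theta_1,\theta_2\}$ of $\Phi_{100}$. It remains to rule out the single-atom measures $\delta_{\theta_1}$ and $\delta_{\theta_2}$. I would first try exhibiting, for each $k$, one cycle $\gamma$ with $c_\gamma(\theta_k)<0$ at $q_{100}$; at least one such cycle should exist among the active ones, since the weight $w_k$ was needed. Interval evaluation then certifies that no single-atom measure is feasible, so both frequencies must appear.

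\textbf{Main obstacle.} I expect the exact algebraic step to be hardest: the polynomial system has large degree and coefficients in $\mathbb{Q}(\sqrt{10})$ replaced by rationals, the dual combination involves several cycles, and the certified factorization with a positive cofactor must be computed reliably. The infinite tail and the support argument are comparatively routine.
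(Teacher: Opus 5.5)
Your plan follows the paper's proof essentially step for step. Both use a two-atom primal pair and a finite dual combination of cycle costs (in the paper, the cycles $(0,1,\ldots,m,0)$ with $m=49,63,64$), certified as an isolated root. Both then use finitely many checked lags plus geometric tail bounds, a double-contact factorization with positive cofactor that forces every feasible measure onto the two contacts, exclusion of each singleton by one violated cycle, and Lemma~\ref{lem:initialization-splice} for the function in $\reals^5$.

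The differences are technical, and a few points need repair:
\begin{itemize}
\item Your algebraic system, as listed, has one equation too many. The equation $\Phi(q,\chi_2)=0$ is implied by the primal equalities and $0<w<1$, so the paper drops it to give the Krawczyk test a square system.
\item You present your polynomial as a \emph{positive} multiple of the cycle combination, but your factorization with $R>0$ and your condition $\partial_q\Phi>0$ only work if it is a \emph{negative} multiple, as with $P$ at $\kappa=10$.
\item For the upper bound, the paper certifies $\partial_rN<0$ on the whole range $q_{100}\le q\le0.9$, where $N$ is a positive multiple of the cycle combination. Together with Lemma~\ref{lem:closed-rate-problem}, this identifies $\qopt(10)$ directly. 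Your local argument is valid but must pass through Theorem~\ref{thm:main}.
\item To exclude the singletons, the paper uses the non-active lags $56$ and $35$ rather than the active ones.
\item For the claim that the tail spans exactly four dimensions, the paper adds a Vandermonde argument: $z_1,\bar z_1,z_2,\bar z_2$ are distinct and all modal coefficients are nonzero. Your observation that the tail lives in $\reals^4$ gives only the upper bound on the dimension.
\end{itemize}
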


\begin{proof}
We state the exact certificate in the normalization $\sigma_t=z^t$, as in Appendix~\ref{app:uniform-upper-bound}.
If $(\eta,v)$ denotes angular weights and the initial interpolation value in this normalization, the original normalization is
\begin{equation}
\label{eq:normalization-change}
 d\nu=\frac{|p_s(z)|^2\,d\eta}{\int|p_s(z)|^2\,d\eta},
 \qquad \varphi_0=\frac{v}{\int|p_s(z)|^2\,d\eta}.
\end{equation}
The inverse uses $|d_s(z)|^2$ in the same way.
Both weights are continuous and strictly positive, so these transformations preserve supports and every interpolation inequality.
Let
\[
 \xi_\star\doteq(q_{100},v_\star,w_\star,
 \chi_{1\star},\chi_{2\star},\lambda_{49},\lambda_{63},\lambda_{64})
\]
be the unique zero in the rational box of radius $10^{-58}$ around
\begin{align*}
 q_{100}&=0.859899778052548365740446035906789693\ldots,\\
 v_\star&=0.002854749733611906316731334872665651\ldots,\\
 w_\star&=0.432893789374821564486049487863967595\ldots,\\
 \chi_{1\star}&=1.847585923705342323643049369488243114\ldots,\\
 \chi_{2\star}&=1.833337063888573702334307789415459859\ldots.
\end{align*}
Here $\chi_j\doteq2\sqrt q\cos\theta_j$ is a \emph{real frequency coordinate}.  The
corresponding angles are
\[
 \theta_1=0.08709509851058300453\ldots,
 \qquad
 \theta_2=0.15162655445126233148\ldots,
\]
and the certificate measure is
$\eta_\star\doteq w_\star\delta_{\theta_1}
 +(1-w_\star)\delta_{\theta_2}$.
The optimal angular pair $(\nu_\star,\varphi_0^\star)$ satisfying \eqref{eq:star-feasible} at $q=q_{100}$ is obtained from $(\eta_\star,v_\star)$ by \eqref{eq:normalization-change}.

To specify the algebraic system, fix $s=\sqrt{100}=10$ and define, for integers $m\geq1$,
\[
 L_m(q,\chi)\doteq-\frac{|p_{10}(z)|^2c_{m0}(\theta)}{1-q^m},\qquad
 U_m(q,\chi)\doteq \frac{|p_{10}(z)|^2c_{0m}(\theta)}{1-q^m},
\]
where $z=\sqrt q\,e^{i\theta}$, $\theta=\arccos(\chi/(2\sqrt q))$, and
$c_{ij}$ is from \eqref{eq:edge-cost} at $s=10$.
Write
\[
 \langle A\rangle_w\doteq wA(q,\chi_1)+(1-w)A(q,\chi_2),
\]
and set
\[
 \Phi_{100}(q,\chi)\doteq\lambda_{49}L_{49}(q,\chi)
 +\lambda_{63}L_{63}(q,\chi)
 +\lambda_{64}L_{64}(q,\chi)-U_1(q,\chi).
\]
Dependence on the dual weights is suppressed in $\Phi_{100}$; derivatives
with respect to $\chi$ keep those weights fixed.
The eight equations are
\begin{gather}
 \langle U_1\rangle_w=v,
 \qquad
 \langle L_{49}\rangle_w=
 \langle L_{63}\rangle_w=
 \langle L_{64}\rangle_w=v,
\label{eq:k100-primal-system}\\
 \lambda_{49}+\lambda_{63}+\lambda_{64}=1,
 \qquad
 \Phi_{100}(q,\chi_1)=\partial_\chi\Phi_{100}(q,\chi_1)=\partial_\chi\Phi_{100}(q,\chi_2)=0.
\label{eq:k100-dual-system}
\end{gather}
The omitted equality $\Phi_{100}(q,\chi_2)=0$ follows from
\eqref{eq:k100-primal-system} and the positive component weights.  An
interval Krawczyk inclusion proves existence and uniqueness of the root in
the rational box.  All three dual weights are positive and equal
approximately to
\[
 (\lambda_{49},\lambda_{63},\lambda_{64})
 \approx ( 0.7275254636036373,\ 0.0389115479752263,\
   0.2335629884211364).
\]

Figure~\ref{fig:k100-tangency} plots $\chi\mapsto\Phi_{100}(q_{100},\chi)$, keeping the
three dual multipliers fixed at the same algebraic solution. Its two contacts
correspond to the two rotation angles above.
\begin{figure}[tbp]
\centering
\includegraphics[width=\linewidth]{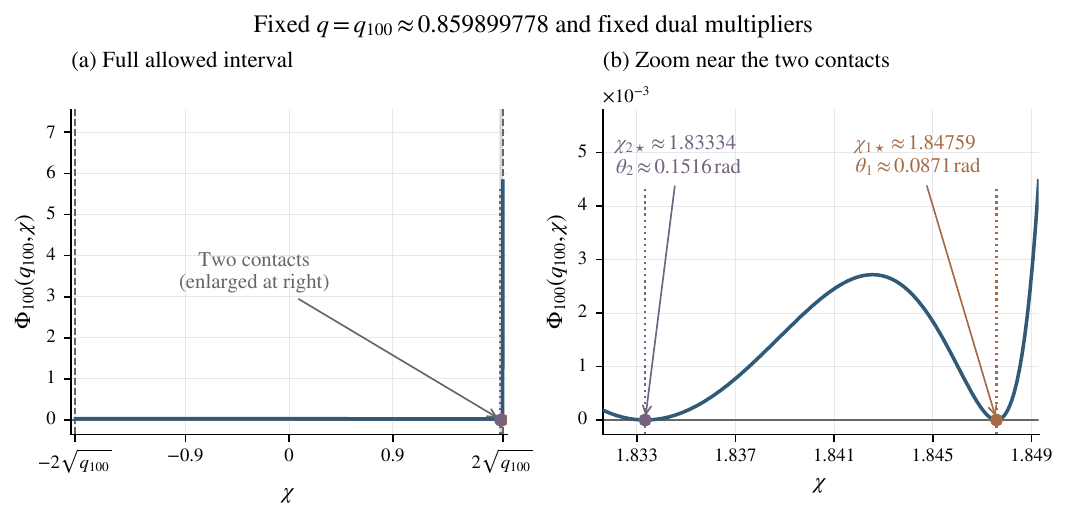}
\caption{Numerical illustration of $\Phi_{100}(q_{100},\chi)$ with $q$ and the dual
multipliers fixed. Left: the full allowed interval
$[-2\sqrt{q_{100}},2\sqrt{q_{100}}]$.
Right: enlargement of the shaded region, showing both contacts
$\chi_{1\star}$ and $\chi_{2\star}$, labeled by their rotation angles
$\theta_j=\arccos(\chi_{j\star}/(2\sqrt{q_{100}}))$.}
\label{fig:k100-tangency}
\end{figure}

\paragraph{The four-dimensional self-similar tail.}
Set
\[
 z_j\doteq\frac{\chi_{j\star}}2
 +i\sqrt{q_{100}-\frac{\chi_{j\star}^2}{4}},
 \qquad a_1\doteq\sqrt{w_\star},
 \qquad a_2\doteq\sqrt{1-w_\star}.
\]
Thus $|z_1|^2=|z_2|^2=q_{100}$.  In $\mathbb C^2\cong\reals^4$, define
\begin{align}
 \sigma_t&\doteq(a_1z_1^t,a_2z_2^t),\notag\\
 r_t&\doteq(a_1p_{10}(z_1)z_1^t,
        a_2p_{10}(z_2)z_2^t),
 &x_{t+1}&\doteq\frac{99}{100}r_t,
 \qquad t\in\mathbb Z,\notag\\
 y_t&\doteq r_t+\frac1{99}\sigma_t,
 &g_t&\doteq y_t+\sigma_t,
 \qquad t\geq0.
\label{eq:k100-tail}
\end{align}
Here $p_{10}$ means \eqref{eq:inverse-transfer} with $s=10$.  Its definition makes these
identities exactly consistent with the Nesterov momentum and gradient steps.  In particular,
\begin{equation}
 \frac{\|x_{t+1}\|^2}{\|x_1\|^2}=q_{100}^t,
 \qquad t\geq0.
\label{eq:k100-exact-decay}
\end{equation}
The four eigenvalues $z_1,\bar z_1,z_2,\bar z_2$ are pairwise distinct and
all four modal coefficients are nonzero.  The associated four-column
Vandermonde determinant is therefore nonzero, so the real span of the tail is
exactly four-dimensional.
For completeness, with the objective minimum normalized to $f(0)=0$, the
corresponding function values can be taken as
\begin{equation}
 f_t\doteq\langle\sigma_t,y_t\rangle-q_{100}^tv_\star
 -\frac1{198}\|\sigma_t\|^2+\frac12\|y_t\|^2.
\label{eq:k100-values}
\end{equation}
Indeed, with $\varphi(0)=0$, Lemma~\ref{lem:stars} allows the interpolation
values $\varphi_t=q_{100}^t v_\star$ in the normalization of
\eqref{eq:k100-tail}. They satisfy
\[
 \varphi_i-\varphi_j\leq\langle r_i,\sigma_i-\sigma_j\rangle
\]
for every ordered pair, including comparisons with the optimum.
There is therefore a proper closed convex function $\varphi$ with
$\varphi(\sigma_t)=\varphi_t$ and $r_t\in\partial\varphi(\sigma_t)$ for every $t$.
Reversing the transform \eqref{eq:phi} then gives \eqref{eq:k100-values},
so the entire sequence of query points, gradients, and function values comes
from one function in $\Fclass{100}(\reals^4)$.

\paragraph{Why two components are necessary.}
As in the case for $\kappa=10$ in Appendix~\ref{app:kappa-ten},
we exactly check the two interpolation inequalities for each index
$m=1,\ldots,180$ and then use geometric bounds for both families at larger
indices. Equality holds only in the upper inequality at $m=1$ and the
lower inequalities at $m=49,63,64$. The interval calculation also proves the global
factorization
\[
 \Phi_{100}(q_{100},\chi)=
 \frac{(\chi-\chi_{1\star})^2(\chi-\chi_{2\star})^2R_{62}(\chi)}{D(\chi)},
 \qquad
 \frac{R_{62}(\chi)}{D(\chi)}>0,
 \quad |\chi|\leq2\sqrt{q_{100}}.
\]
For any feasible angular measure $\nu$ at $q=q_{100}$, the corresponding angular measure $\eta$ from \eqref{eq:normalization-change} satisfies
$\int\Phi_{100}(q_{100},\chi(\theta))\,d\eta(\theta)\leq0$, while the factorization gives
$\Phi_{100}(q_{100},\chi)\geq0$.  We call its two zeros the \emph{contact
frequencies}, because the dual inequality is tight there.  Such a measure must
therefore be supported on these two frequencies, and the change of weights preserves support.  Neither contact works
alone: directed interval arithmetic proves
\begin{align*}
 L_{56}(q_{100},\chi_{1\star})
 -U_1(q_{100},\chi_{1\star})
 &>0.00511062771457,\\
 L_{35}(q_{100},\chi_{2\star})
 -U_1(q_{100},\chi_{2\star})
 &>0.0281844327967.
\end{align*}
Each singleton therefore has an empty feasible interval for $\varphi_0$.  Every
optimal angular measure consequently has support cardinality exactly two, not
merely at most two. This counts angles in $[0,\pi]$; a disk measure may split
the mass at each angle between conjugate points.

For the matching upper bound, let $C_m^\sigma(q,\chi)$ be the cost of the
cycle $(0,1,\ldots,m,0)$ for $\sigma_t=z^t$ and $r_t=p_{10}(z)z^t$.
Summing its consecutive comparisons and its closing comparison gives
\[
 C_m^\sigma(q,\chi)=(1-q^m)\bigl(U_1(q,\chi)-L_m(q,\chi)\bigr).
\]
With $q=r^2$, define
\[
 S(r,\theta)\doteq
 \sum_{m\in\{49,63,64\}}\frac{\lambda_m}{1-q_{100}^m}
 C_m^\sigma(r^2,2r\cos\theta).
\]
The coefficients $\lambda_m/(1-q_{100}^m)$ are positive and independent of $r$.
They are the cycle weights $w_\gamma$ of Appendix~\ref{app:uniform-upper-bound} for $\gamma=(0,1,\ldots,m,0)$, whereas $w_\star$ and $1-w_\star$ weight the two angles.
At $r=\sqrt{q_{100}}$, $S(r,\theta)=-\Phi_{100}(q_{100},2r\cos\theta)$.
Set
\[
 N(r,\theta) \doteq 99\lvert10re^{i\theta}-9\rvert^4S(r,\theta).
\]
The multiplicative factor is positive on the relevant rectangle.  At
$r=\sqrt{q_{100}}$, the factorization above gives $S(r,\theta)\leq0$.  The
checker proves $\partial_rN(r,\theta)<-0.585$ throughout
$\sqrt{q_{100}}\leq r\leq\sqrt{0.9}$ by covering this rectangle with $1544$
directed rational boxes.  Hence $S(r,\theta)<0$ for every
$r>\sqrt{q_{100}}$ in this interval.  Its coefficients remain nonnegative
weights on finite cycle inequalities, so every
$q_{100}<q\leq0.9$ is infeasible. Lemma~\ref{lem:closed-rate-problem}
already gives $\qopt(10)\leq0.9$. Thus
\[
 \rho_{\rm fix}(100)=\rho_{\rm hor}(100)=\qopt(10)=q_{100}.
\]

The exact trajectory \eqref{eq:k100-tail} occupies four real dimensions and does not by itself enforce $x_{-1}=x_0$.
The construction in Lemma~\ref{lem:initialization-splice} scales this trajectory and adds one orthogonal scalar coordinate to $x_t$, equal to zero for $t\geq2$.
It therefore produces one function in
$\Fclass{100}(\reals^5)$ with the standard initialization and asymptotic
factor $q_{100}$. Equation~\eqref{eq:k100-exact-decay} describes the
four-dimensional sequence in \eqref{eq:k100-tail}, not the five-dimensional
trajectory with $x_{-1}=x_0$.
Figure~\ref{fig:k100-multimode} shows an orthogonal
three-dimensional projection of the genuine four-dimensional tail.
\end{proof}

\begin{figure}[h]
\centering
\includegraphics[width=\linewidth]{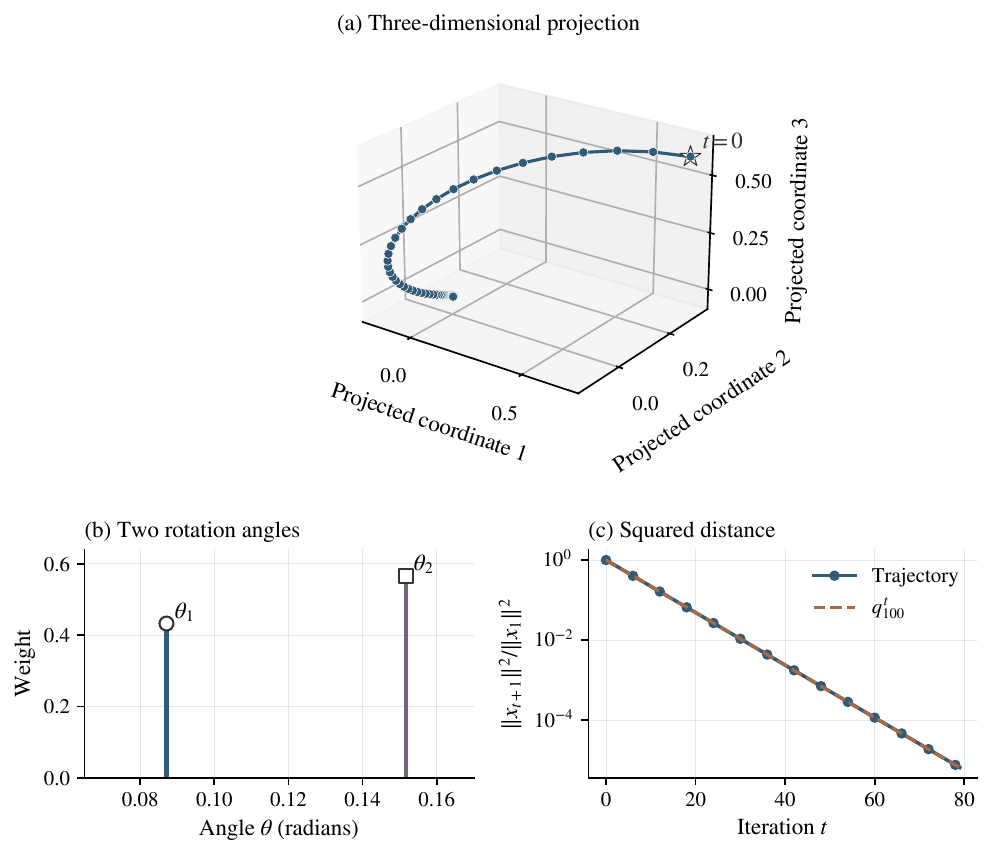}
\caption{The $\kappa=100$ example, certified by exact arithmetic.
(a) An orthogonal three-dimensional projection of the four-dimensional trajectory after initialization.
The star marks $t=0$.
(b) The two rotation angles and their positive weights.
(c) The normalized squared norm in four dimensions equals $q_{100}^t$.
The construction with standard initialization has a fifth coordinate that is zero from iteration two onward and is not shown.
The exact certificate and trajectory data are included in the supplement.}
\label{fig:k100-multimode}
\end{figure}

\section{Algebraic degree and formulas by radicals}
\label{app:rate-radicals}

We prove Remark~\ref{rem:no-radical-formula} using another exact example
with one contracting and rotating component. Here the interpolation
comparisons $(0,1)$, $(5,0)$, and $(6,0)$ are active, meaning that they hold
with equality. We first identify the optimal squared-distance factor, then
show that it has an algebraic specification but cannot be expressed by
radicals over the rationals. This rules out typical descriptions of convergence rates expressed as radicals but we stress that it does not rule out other
notions of closed form.

\subsection{Two active comparison cycles and an exact rate}

Write $z=\sqrt q\,e^{i\theta}$, where $0<q<1$ and
$\theta\in[0,\pi]$, and put $\chi=z+\bar z=2\sqrt q\cos\theta$.
Thus $q$ is the squared contraction factor, while the real coordinate
$\chi$ encodes the rotation angle, as in Appendix~\ref{app:kappa-ten}.
For $s=\sqrt\kappa>1$, write $\alpha_s\doteq1-1/s$, so
$sz-(s-1)=s(z-\alpha_s)$. At fixed $s$, the coefficient $p_s(z)$ in \eqref{eq:inverse-transfer} gives $r_t=p_s(z)z^t$ when $\sigma_t=z^t$.
The corresponding query points satisfy $y_t=(p_s(z)+(s^2-1)^{-1})z^t$. The examples in Appendices~\ref{app:kappa-ten} and \ref{app:kappa-hundred} have $\kappa$ fixed and as such do not need $\alpha_s$.

We again sum interpolation comparisons around a cycle. For an integer
$m\geq1$, take the consecutive comparisons $(k,k+1)$ for $k=0,\ldots,m-1$
and the closing comparison $(m,0)$. The values on the left-hand sides
cancel, leaving a constraint associated with $(0,1,\ldots,m,0)$.
To express it in real coordinates, as in Appendix~\ref{app:kappa-ten} we introduce the sums of conjugate powers
$T_j\doteq z^j+\bar z^j$, which satisfy
\[
 T_0=2,\qquad T_1=\chi,\qquad T_{j+1}=\chi T_j-qT_{j-1},
\]
for $j\geq1$. In the polynomial family below, $\alpha$ is a
placeholder for a real number that we later set to $\alpha_s$. Clearing the transfer-function denominator in the cycle expansion produces real parts involving
$z^2(\bar z-\alpha)^2$ and its product with $\bar z$. We abbreviate these by
\[
\begin{aligned}
 D&\doteq2\operatorname{Re}\!\left[z^2(\bar z-\alpha)^2\right]
   =2q^2-2\alpha q\chi+\alpha^2(\chi^2-2q),\\
 E&\doteq2\operatorname{Re}\!\left[z^2(\bar z-\alpha)^2\bar z\right]
   =(q^2+\alpha^2q)\chi-4\alpha q^2,\\
 H_m&\doteq\sum_{j=0}^{m-1}q^j.
\end{aligned}
\]
The polynomial expressions follow from $z\bar z=q$ and $z+\bar z=\chi$.
In particular, $D-E$ collects the contribution with the factor $1-\bar z$.
For $m\geq1$, define the polynomial
\begin{equation}
\label{eq:radical-cycle-polynomial}
 P_m(\alpha,q,\chi)\doteq
 \frac{(D-E)H_m+q^mD-q^2T_m+2\alpha qT_{m+1}-\alpha^2T_{m+2}}
      {\chi-1-q}.
\end{equation}
The numerator is divisible by $\chi-1-q$, so the quotient is a polynomial.
To see this, substitute $\chi=1+q$. The recurrence gives $T_j=1+q^j$,
and the numerator becomes
\[
 (q-\alpha)^2\bigl[(1-q)H_m+q^m-1\bigr]=0,
\]
where the last equality uses the finite geometric-sum identity
$(1-q)H_m=1-q^m$.
Expanding the cycle cost from \eqref{eq:disk-cycle-cost} gives the identity
\begin{equation}
\label{eq:radical-cycle-normalization}
 \mathcal C_{(0,1,\ldots,m,0)}^s(z)
   =\frac{1+q-\chi}{2}P_m(\alpha_s,q,\chi).
\end{equation}
The prefactor is positive on $|\chi|\leq2\sqrt q$, $0<q<1$.
For comparison, the polynomial in \eqref{eq:k10-cycle-polynomial} satisfies
$P(q,\chi)=-50P_{18}(1-1/\sqrt{10},q,\chi)$.
We will use $m=5$ and $m=6$, corresponding to the two cycles
$(0,1,\ldots,5,0)$ and $(0,1,\ldots,6,0)$. These are two cycles of
interpolation comparisons for a single rotating component, not two
components as in Appendix~\ref{app:kappa-hundred}, nor periodic trajectories.

\paragraph{Selecting an exact candidate.} We follow the same approach as before in Appendix~\ref{app:kappa-ten}.
Set
\[
 s_\dagger\doteq\frac{1421}{1250},\qquad
 \alpha_\dagger\doteq1-\frac1{s_\dagger}=\frac{171}{1421},\qquad
 \kappa_\dagger\doteq s_\dagger^2=\frac{2019241}{1562500}.
\]
Here $s_\dagger$, and therefore $\alpha_\dagger$ and $\kappa_\dagger$,
are exact rational numbers. Consequently,
\[
 P_5(\alpha_\dagger,q,\chi),\;P_6(\alpha_\dagger,q,\chi)
 \in\mathbb Q[q,\chi].
\]
In contrast, the polynomial we used for $\kappa=10$ has coefficients involving
$\sqrt{10}$ and belongs to $\mathbb Q(\sqrt{10})[q,\chi]$.
We use this separate construction so that the elimination and Galois
argument can be carried out directly over $\mathbb Q$. It is the rational
choice of $s_\dagger$, not just of the condition number, that ensures
rational coefficients. The rational data are the fixed parameters and
polynomial coefficients; the unknown roots need not be rational.

Let $(q_\dagger,\chi_\dagger)$ be the common zero
\begin{equation}
\label{eq:mixed-radical-system}
 P_5(\alpha_\dagger,q,\chi)=P_6(\alpha_\dagger,q,\chi)=0
\end{equation}
in the rational isolating box of radius $10^{-72}$ around
\[
 q_\dagger=0.0259177578383525961624\ldots,\qquad
 \chi_\dagger=0.2515525327377247329537\ldots .
\]
As before interval Newton inclusion proves existence and uniqueness there,
as well as $q_{\mathrm{quad}}(s_\dagger)<q_\dagger<1$ and $\chi_\dagger^2<4q_\dagger$.

\begin{lemma}[Identification of the exact rate]
\label{lem:mixed-radical-rate}
The isolated root satisfies $\qopt(s_\dagger)=q_\dagger$.
\end{lemma}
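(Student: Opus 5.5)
We prove $\qopt(s_\dagger)=q_\dagger$ by matching a lower and an upper bound, as in the $\kappa=10$ and $\kappa=100$ cases. We exhibit a single angle $\theta_\dagger=\arccos(\chi_\dagger/(2\sqrt{q_\dagger}))$ and a scalar $\varphi_0$ satisfying \eqref{eq:star-feasible} at $q_\dagger$, which gives $q_\dagger\in\mathcal Q(s_\dagger)$. We then show, via Lemma~\ref{lem:strict-alternative} and Proposition~\ref{prop:fixed-domain-rate}, that every $q$ slightly above $q_\dagger$ admits a strict cycle separator. The second step gives $q\notin\mathcal Q(s_\dagger)$ near $q_\dagger$. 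To conclude $\qopt(s_\dagger)\le q_\dagger$ we also need to exclude all larger $q\le 1-1/s_\dagger$, so the separator must be shown to stay strictly negative on the whole interval $(q_\dagger,1-1/s_\dagger]$.

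\emph{Lower bound.} Take $\nu=\delta_{\theta_\dagger}$, with $\chi_\dagger^2<4q_\dagger$ certified. The active comparisons $(0,1)$, $(5,0)$, $(6,0)$ should all determine the same $\varphi_0$. Define $\varphi_0=\bar c_{01}/(1-q_\dagger)$. The vanishing of $P_5$ and $P_6$, combined with \eqref{eq:radical-cycle-normalization}, makes the cycles $(0,\dots,5,0)$ and $(0,\dots,6,0)$ cost zero. Hence $\varphi_0=-\bar c_{50}/(1-q^5)=-\bar c_{60}/(1-q^6)$. We then check the remaining inequalities in \eqref{eq:star-feasible} as before. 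For $t\le T$ with moderate $T$, we use outward-rounded interval arithmetic on the isolating box, with strict margins except at the three active indices. For $t>T$, we use the geometric tail bound from Appendix~\ref{app:kappa-ten}: the $t$th margins differ from the limiting minimizer-comparison margins by $O(q_\dagger^{t/2})$, and since $q_\dagger\approx0.026$ this is tiny very quickly.

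\emph{Upper bound.} Because there are two active cycles, use a weighted dual function $\Phi_\dagger(q,\chi)=\lambda P_5(\alpha_\dagger,q,\chi)+(1-\lambda)P_6(\alpha_\dagger,q,\chi)$, with sign chosen to match \eqref{eq:radical-cycle-normalization}. Choose $\lambda\in(0,1)$ so that $\partial_\chi\Phi_\dagger(q_\dagger,\chi_\dagger)=0$. This is a linear equation in $\lambda$; certify that its solution lies in $(0,1)$. Then show the tangency is a global one-sided contact: $\Phi_\dagger(q_\dagger,\chi)=(\chi-\chi_\dagger)^2R(\chi)$ with $R$ of fixed sign on $|\chi|\le2\sqrt{q_\dagger}$, certified by exact division and interval bounds on subintervals. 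This makes the cycle cost $\mathcal C^s=\tfrac{1+q-\chi}{2}\Phi_\dagger$ nonpositive at $q_\dagger$ with a unique zero. Next, bound $\partial_q$ of the suitably normalized separator strictly negative on the rectangle $[q_\dagger,1-1/s_\dagger]\times\{|\chi|\le 2\sqrt q\}$, or equivalently in the $(r,\theta)$ parametrization used for $\kappa=100$, by covering it with directed rational boxes. This gives a strictly negative nonnegative combination of finitely many cycle costs at every $q\in(q_\dagger,1-1/s_\dagger]$. Lemma~\ref{lem:closed-rate-problem} excludes $q>1-1/s_\dagger$, so $\max\mathcal Q(s_\dagger)=q_\dagger$.

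\emph{Main obstacle.} The hardest step is the uniform negativity of the separator over the whole $q$-range, rather than only near $q_\dagger$. Note that $1-1/s_\dagger\approx0.12$, while $q_\dagger\approx0.026$. If the $q$-derivative bound fails somewhere on this range, I would first subdivide $[q_\dagger,1-1/s_\dagger]$. On each piece I would verify directly that $\Phi_\dagger(q,\cdot)$ keeps its sign on $|\chi|\le 2\sqrt q$, allowing a different fixed convex combination, or a single cycle, on pieces far from $q_\dagger$. This remains valid because any strict separator at a given $q$ suffices for infeasibility at that $q$.
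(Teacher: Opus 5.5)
Your lower bound matches the paper's construction; the paper works in the normalization $\sigma_t=z^t$, which differs from yours only by a positive rescaling. Your upper bound takes a different route.

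\textbf{What the paper does.} It certifies the strict separator $\lambda_\dagger C_5^\sigma+(1-\lambda_\dagger)C_6^\sigma$ only for $q$ slightly above $q_\dagger$. The certificate consists of three bounds: $\partial_{\chi\chi}\Phi_\dagger<-100$ near the contact, $\partial_q\Phi_\dagger(q_\dagger,\chi_\dagger)<-300$, and a negative bound away from the contact. It then applies Proposition~\ref{prop:strict-separator-rate} to get $\rhohor(\kappa_\dagger)\le q$. Finally it uses Theorem~\ref{thm:main}, $\rhohor=\qopt$, and lets $q\downarrow q_\dagger$. Through the theorem, a strict separator at a single $q>q_{\mathrm{quad}}(s)$ already forces $\qopt(s)\le q$. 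So the global-in-$q$ difficulty you identify as your main obstacle never arises on the paper's route.

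\textbf{What your route buys and costs.} You identify $\max\mathcal Q(s_\dagger)$ directly, using only Lemmas~\ref{lem:stars}, \ref{lem:closed-rate-problem}, \ref{lem:strict-alternative} and Proposition~\ref{prop:fixed-domain-rate}. The paper itself argues this way at $\kappa=100$, where it covers the range up to $q=0.9$ with boxes. The gain is that the identification stays independent of the dynamical machinery: the finite-support lemma, the initialization splice, and the uniform rate bound. The price is a certified computation on all of $(q_\dagger,1-1/s_\dagger)$, which you describe but do not carry out.

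\textbf{Your fallback is sound in principle.} If the lemma holds, every such $q$ admits a strict separator. Strict negativity on the compact angle set persists for nearby $q$, so finitely many separators cover any compact subinterval bounded away from $q_\dagger$. Your local contact argument handles the remaining stretch next to $q_\dagger$.

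\textbf{A minor point.} $\lambda_\dagger$ and $\chi_\dagger$ are algebraic of degree $20$. The factorization $(\chi-\chi_\dagger)^2R(\chi)$ would therefore have to be computed over that number field or with interval enclosures, not by rational division. The paper's concavity bound near the contact, combined with a negative bound away from it, avoids this.
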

\begin{proof}
\emph{Interpolation inequalities and the lower bound.}
We use the normalization of Appendix~\ref{app:uniform-upper-bound}, with
$p_s$ defined in \eqref{eq:inverse-transfer}. For
$z=\sqrt q\,e^{i\theta}$, $s=s_\dagger$, and $q>q_{\mathrm{quad}}(s)$, define
\[
 K_m(z)\doteq(1-\bar z)\frac{1-q^m}{1-q}+q^m-z^m,
 \qquad C_m^\sigma(q,\chi)\doteq\operatorname{Re}[p_s(z)K_m(z)].
\]
Here $C_m^\sigma$ is the sum of the interpolation terms around
$(0,1,\ldots,m,0)$ for $\sigma_t=z^t$, $r_t=p_s(z)z^t$. Unlike
$C_m^\pm$ in Appendix~\ref{app:kappa-ten}, it is a whole-cycle cost, not
the right-hand side of an individual comparison. These cycle costs are
rational functions of $(q,\chi)$, and
\[
 \mathcal C_{(0,1,\ldots,m,0)}^s(z)
   =(s^2-1)|z-\alpha_s|^4 C_m^\sigma(q,\chi).
\]
Since $s=s_\dagger$ is now fixed, $\alpha_s=\alpha_\dagger$ and
$|z-\alpha_\dagger|^2=q-\alpha_\dagger\chi+\alpha_\dagger^2$.
Combining this identity with \eqref{eq:radical-cycle-normalization} gives
the explicit relation to the polynomials:
\begin{equation}
\label{eq:radical-polynomial-bridge}
 C_m^\sigma(q,\chi)=
 \frac{1+q-\chi}
 {2(s_\dagger^2-1)(q-\alpha_\dagger\chi+\alpha_\dagger^2)^2}
 P_m(\alpha_\dagger,q,\chi).
\end{equation}
The common factor is independent of $m$ and strictly positive for
$q_{\mathrm{quad}}(s_\dagger)<q<1$ and $|\chi|\leq2\sqrt q$.
In particular, the two
active polynomial equations are exactly $C_5^\sigma=C_6^\sigma=0$.

The isolated pair $(q_\dagger,\chi_\dagger)$ gives the complex multiplier
$z_\dagger\doteq\chi_\dagger/2+i\sqrt{q_\dagger-\chi_\dagger^2/4}$ for the contracting
and rotating component. Choose the value at index zero of the convex
interpolation function $\varphi$ from \eqref{eq:phi} to be
\[
 v\doteq\frac{\operatorname{Re}[p_s(z_\dagger)(1-\bar z_\dagger)]}
         {1-q_\dagger}.
\]
With $\varphi_t\doteq q_\dagger^t v$, the interpolation inequalities for
$(0,m)$ and $(m,0)$ from \eqref{eq:star-feasible}, expressed in this
normalization, are equivalent to
\begin{equation}
\label{eq:mixed-radical-all-lags}
 0\leq C_m^\sigma(q_\dagger,\chi_\dagger)
 \leq\operatorname{Re}p_s(z_\dagger)|1-z_\dagger^m|^2,
 \qquad m\geq1.
\end{equation}
The lower equalities at $m=5,6$ and the upper equality at $m=1$ follow
algebraically. They correspond to the active comparisons $(5,0)$, $(6,0)$,
and $(0,1)$, respectively. The upper inequalities at $m=5,6$ are strict
because $\operatorname{Re}p_s(z_\dagger)>0$ and $|z_\dagger|<1$.
Outward-rounded interval arithmetic verifies the positive real part and all
remaining inequalities for $m=1,\ldots,30$. Here $m$ is the difference
between the two iteration indices, not a bound on the iteration count. With
$K_\infty\doteq(1-\bar z)/(1-q)$ and
$C_\infty^\sigma\doteq\operatorname{Re}(p_s(z)K_\infty)$, the identity
\[
 C_m^\sigma=C_\infty^\sigma
       +q^m\operatorname{Re}[p_s(z)(1-K_\infty)]
       -\operatorname{Re}(p_s(z)z^m)
\]
gives geometric bounds for every $m\geq31$. The interval calculation shows
that both limiting margins are strictly larger than the corresponding
bounds on the remaining terms. Thus both inequalities hold for every
$m\geq1$. The positive rescaling in \eqref{eq:normalization-change} gives
an angular pair satisfying \eqref{eq:star-feasible} at $q=q_\dagger$.
Hence $q_\dagger\in\mathcal Q(s_\dagger)$, establishing the lower bound.

\emph{Ruling out larger asymptotic factors.}
For the upper bound, we need a positive weighted sum of cycle costs that is
strictly negative at every allowed angle when $q$ is just above $q_\dagger$.
We construct this function from the two active costs $C_5^\sigma$ and
$C_6^\sigma$. Both vanish at the candidate; we choose their weight so that
the derivative of the combination with respect to $\chi$ vanishes there
too. We therefore take the weight
\[
 \lambda_\dagger\doteq
 \frac{\partial_\chi C_6^\sigma(q_\dagger,\chi_\dagger)}
      {\partial_\chi C_6^\sigma(q_\dagger,\chi_\dagger)
        -\partial_\chi C_5^\sigma(q_\dagger,\chi_\dagger)}
 =0.4156030236805012584\ldots\in(0,1)
\]
and define the weighted sum, which in this case is a convex combination
\begin{equation}
\label{eq:radical-dual-combination}
 \Phi_\dagger(q,\chi)\doteq\lambda_\dagger C_5^\sigma(q,\chi)
 +(1-\lambda_\dagger)C_6^\sigma(q,\chi).
\end{equation}
Here $\lambda_\dagger$ and $1-\lambda_\dagger$ weight the two cycles, not two angles; the construction uses one rotating component.
By \eqref{eq:radical-polynomial-bridge}, this is equivalently
\begin{equation}
\label{eq:radical-polynomial-combination}
 \Phi_\dagger(q,\chi)=
 \frac{(1+q-\chi)\bigl[\lambda_\dagger P_5(\alpha_\dagger,q,\chi)
 +(1-\lambda_\dagger)P_6(\alpha_\dagger,q,\chi)\bigr]}
 {2(s_\dagger^2-1)(q-\alpha_\dagger\chi+\alpha_\dagger^2)^2}.
\end{equation}
Thus $\Phi_\dagger$ is an auxiliary rational function built from the two defining
polynomials, with the same sign as their weighted combination on the
allowed interval, which we use for the upper bound.
The elimination below uses the equations $P_5=P_6=0$ which only involve rational coefficients; note, we do not require the weight $\lambda_\dagger$ to be rational.
We keep $\lambda_\dagger$ fixed as $q$ varies. The algebraic definitions give
$\Phi_\dagger=\partial_\chi\Phi_\dagger=0$ at $(q_\dagger,\chi_\dagger)$.
Outward-rounded interval arithmetic establishes $\partial_{\chi\chi}\Phi_\dagger<-100$
on a neighborhood of $\chi_\dagger$ at $q=q_\dagger$, and
$\partial_q\Phi_\dagger(q_\dagger,\chi_\dagger)<-300$. The same method establishes a
strict negative bound away from $\chi_\dagger$ using finitely many intervals
with rational endpoints in $[-161/500,161/500]$, where
$2\sqrt{q_\dagger}<161/500$. An exact rational check confirms that these
intervals and the neighborhood used for the concavity bound together cover
the entire interval; no part of the allowed frequency interval is omitted.
Thus $\chi_\dagger$ is a zero-valued maximum of $\chi\mapsto\Phi_\dagger(q_\dagger,\chi)$.
This is the contact with the horizontal axis; the maximum, rather than a
minimum, reflects the sign of the cycle costs used here.

Local strict concavity and the negative partial derivative with respect to
$q$, together with the strict negative bound away from $\chi_\dagger$ on a
compact interval, make this same weighted sum strictly negative for every
allowed rotation angle when $q>q_\dagger$ is sufficiently close to
$q_\dagger$. Its coefficients are positive weights on finite cycle
inequalities. The positive normalization factor transfers it to the costs
of Lemma~\ref{lem:strict-alternative}.
Proposition~\ref{prop:strict-separator-rate} therefore gives
$\rhohor(\kappa_\dagger)\leq q$ for all such $q$. Letting
$q\downarrow q_\dagger$ and using Theorem~\ref{thm:main} gives
$\qopt(s_\dagger)\leq q_\dagger$. Feasibility supplies the reverse
inequality. This factor is also attained by a trajectory in $\reals^2$ with
$x_{-1}=x_0$, using a separate choice of the initial point and first gradient
rather than the additional coordinate of Lemma~\ref{lem:initialization-splice}.
\end{proof}

Figure~\ref{fig:radical-tangency} plots the weighted sum
\eqref{eq:radical-dual-combination}, keeping both $q=q_\dagger$ and
$\lambda_\dagger$ fixed. Its contact corresponds to the rotation angle
of the single component.
\begin{figure}[tbp]
\centering
\includegraphics[width=\linewidth]{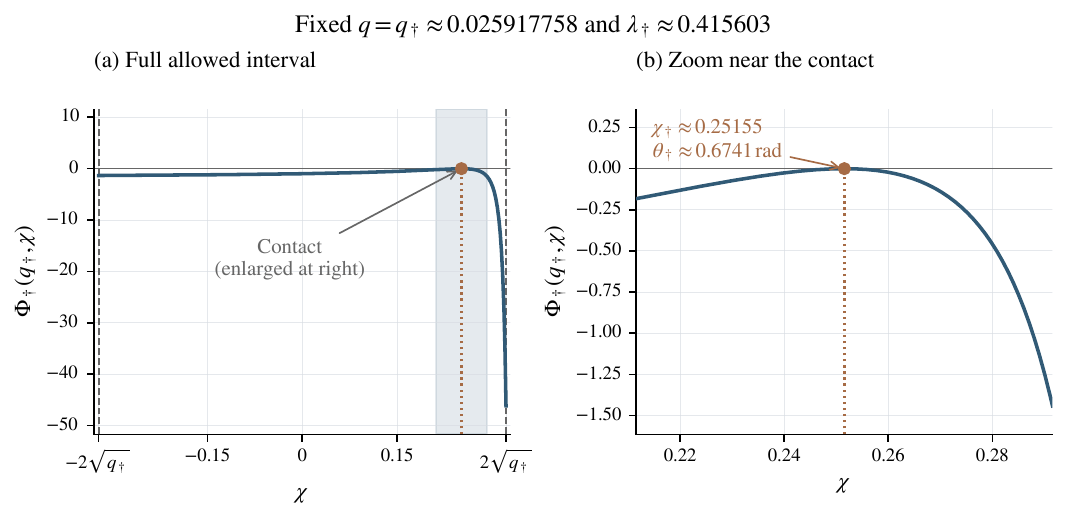}
\caption{Numerical illustration of $\Phi_\dagger(q_\dagger,\chi)$ with $q$ and the
weight $\lambda_\dagger$ fixed. Left: the full allowed interval
$[-2\sqrt{q_\dagger},2\sqrt{q_\dagger}]$.
Right: enlargement of the shaded region, showing the zero-valued maximum
at $\chi_\dagger$, labeled by the rotation angle
$\theta_\dagger=\arccos(\chi_\dagger/(2\sqrt{q_\dagger}))$.}
\label{fig:radical-tangency}
\end{figure}

\subsection{The minimal polynomial and its Galois group}

We now show why the exact rate $q_\dagger = \qopt(s_\dagger)$ derived above cannot be written using rational numbers, arithmetic operations, and finitely many nested $n$th roots. Recall the
parameters of the preceding example:
\[
 s_\dagger=\frac{1421}{1250},\qquad
 \kappa_\dagger=s_\dagger^2=\frac{2019241}{1562500},\qquad
 \alpha_\dagger=1-\frac1{s_\dagger}=\frac{171}{1421}.
\]
Lemma~\ref{lem:mixed-radical-rate} identifies its exact squared-distance factor
as $\qopt(s_\dagger)=q_\dagger$, where
\[
q_\dagger=0.0259177578383525961624\ldots.
\]
We will give a polynomial equation specifying this number and then show why
no radical expression can specify it. The second step is essential: degree
alone does not rule out radicals, as the root $2^{1/20}$ of $q^{20}-2$ illustrates.
For background on the algebra used here, see \citet{stewart2022galois}; precise
versions of the two Galois-theoretic results invoked below are given in
\citet{milne2022fields}.

\paragraph{The polynomial.}
Define
\begin{equation}
\label{eq:radical-minimal-polynomial}
 F(q)\doteq\sum_{j=0}^{20}a_jq^j,
\end{equation}
with all coefficients specified below. They are integers with no common
factor greater than one, and $a_{20}>0$. This normalization avoids fractional
coefficients. Once irreducibility is proved, $F/a_{20}$ is the \emph{minimal
polynomial}: the polynomial of least degree with rational coefficients that
vanishes at $q_\dagger$, normalized to have leading coefficient one.
This last normalization is called \emph{monic}.
\begin{center}
\small
\begin{tabular}{@{}r r@{\qquad}r r@{}}
\toprule
$j$ & $a_j$ & $j$ & $a_j$ \\
\midrule
$20$ & $265994468953997804783976976$ & $9$ & $8189861320519662192293565908$ \\
$19$ & $807144202358229947702699124$ & $8$ & $6349950181251294444105843789$ \\
$18$ & $1612480889879987742825308665$ & $7$ & $4197340353548692377246982409$ \\
$17$ & $2946256217869261342444311626$ & $6$ & $2364361116661553108814131398$ \\
$16$ & $4820345226837491495536334889$ & $5$ & $1329224037648399967202081163$ \\
$15$ & $6720011736645743438628341186$ & $4$ & $525722768550046707732034423$ \\
$14$ & $8515544960311467977164916002$ & $3$ & $-27937412603949322688243409$ \\
$13$ & $10093745217537977492178754770$ & $2$ & $401307772246057263077361$ \\
$12$ & $10921704964320530460147150207$ & $1$ & $-1444247861804047301319$ \\
$11$ & $10853408849668692979789731558$ & $0$ & $731086699811838561$ \\
$10$ & $9752173559218132978619645698$ & & \\
\bottomrule
\end{tabular}
\end{center}

\begin{proposition}[A rate not expressible by radicals]
\label{prop:rate-not-radical}
The factor $\qopt(s_\dagger)$ is the unique real zero of $F$ in the rational
interval
\begin{equation}
\label{eq:radical-root-interval}
 I\doteq(0.025917757838352596,\;0.025917757838352597).
\end{equation}
The polynomial $F$ is irreducible over $\mathbb Q$ and its splitting-field
Galois group is $S_{20}$. In particular, $\qopt(s_\dagger)$ has algebraic
degree $20$ and is not expressible by radicals over $\mathbb Q$.
\end{proposition}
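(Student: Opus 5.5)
The proof has three parts: show that $q_\dagger$ is a zero of $F$ and the only one in $I$, prove $F$ irreducible over $\mathbb Q$, and show its Galois group is $S_{20}$. Radical non-expressibility then follows from Galois' theorem, since $S_{20}$ is not solvable.

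\emph{Step 1 (elimination and isolation).} Starting from the system \eqref{eq:mixed-radical-system}, I would eliminate $\chi$ by computing the resultant $\operatorname{Res}_\chi\bigl(P_5(\alpha_\dagger,q,\chi),P_6(\alpha_\dagger,q,\chi)\bigr)\in\mathbb Q[q]$ exactly. Every common zero $(q,\chi)$ has its $q$ among the roots of this resultant, so $q_\dagger$ is one of them. I would factor the resultant over $\mathbb Q$, discard the factors that do not vanish on the isolating box around $q_\dagger$ (checked by interval evaluation), and identify the remaining factor with $F$ after clearing denominators and removing the content. Uniqueness of the real zero in $I$ follows from a sign change of $F$ at the two rational endpoints, evaluated exactly in integer arithmetic, together with a Sturm sequence count (or Descartes' rule after a Möbius shift) showing exactly one root in $I$. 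Since $q_\dagger$ lies in the box of radius $10^{-72}$, which is contained in $I$, we get $F(q_\dagger)=0$. Lemma~\ref{lem:mixed-radical-rate} then gives $\qopt(s_\dagger)=q_\dagger$.

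\emph{Step 2 (irreducibility and Galois group).} I would use Dedekind's criterion: for a prime $p$ not dividing $a_{20}\operatorname{disc}(F)$, the factorization pattern of $F \bmod p$ is the cycle type of some element of the Galois group. Irreducibility over $\mathbb Q$ follows from one such prime $p$ with $F \bmod p$ irreducible of degree $20$. That prime, or Gauss's lemma applied to any good prime where $F$ stays irreducible, also makes $G$ transitive. I would then search small primes for two further patterns:
\begin{itemize}
\item a prime giving factorization type $(2,1^{18})$ or, more robustly, $(2,\text{odd},\dots)$, whose suitable power is a transposition;
\item a prime giving type $(19,1)$, which supplies a $19$-cycle.
\end{itemize}
A transitive group containing a transposition and a $19$-cycle is primitive, because a prime cycle of length $n-1$ forces $2$-transitivity. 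A primitive group containing a transposition is $S_{20}$ (Jordan). Alternatively, a transposition together with a $p$-cycle for a prime $p>n/2$ in a transitive group suffices.

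\emph{Step 3 (conclusion).} Since $F$ is irreducible, $F/a_{20}$ is the minimal polynomial of $q_\dagger$, so $q_\dagger$ has degree $20$. Because $S_{20}$ is not solvable, $q_\dagger$ is not expressible by radicals over $\mathbb Q$. For Remark~\ref{rem:no-radical-formula}: a radical formula in $s$ valid for all $s>1$, specialized at the rational point $s_\dagger$, would express $q_\dagger$ by radicals over $\mathbb Q$, which is a contradiction.

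The main obstacle is Step 1. The resultant of two bivariate polynomials of moderately high degree with large rational coefficients is expensive, and the factor selection must be rigorous, meaning we must certify exactly which irreducible factor vanishes at $q_\dagger$. I would do all arithmetic exactly over $\mathbb Z$ and verify non-vanishing of the other factors on the box with outward-rounded intervals. A further risk in Step 2 is that a transposition-type prime may be hard to find among small primes; if so, I would fall back on the $p$-cycle variant of Jordan's criterion.
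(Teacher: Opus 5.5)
Your proposal is correct and follows the paper's proof essentially step for step: the paper factors $\operatorname{Res}_\chi(P_5,P_6)$ as $q^7(q+1)^2(q-\alpha_\dagger^2)^3(q^2-q+1)^2(q^2+q+1)^3F(q)$ up to a rational scalar and checks that no factor before $F$ vanishes in $I$. It then isolates the root by Sturm's rule and applies Dedekind's theorem at $p=223$ (irreducible of degree $20$), $p=13$ (type $1,19$, giving $2$-transitivity) and $p=107$ (type $1,1,2,3,13$, whose $39$th power is a transposition) to get $S_{20}$. The differences are cosmetic---the paper spreads the transposition to every pair directly by $2$-transitivity instead of citing Jordan's theorem, and handles the non-monic leading coefficient by passing to $a_{20}^{19}F(u/a_{20})$---though note that your fallback $p$-cycle criterion, as you stated it, still requires a transposition and so would not rescue the case you were worried about.
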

\begin{proof}
\emph{Eliminating the angle.}
The preceding example is determined by two active cycle constraints,
$P_5(\alpha_\dagger,q,\chi)=P_6(\alpha_\dagger,q,\chi)=0$.
Here $z=\sqrt q\,e^{i\theta}$, with $\theta\in[0,\pi]$, specifies the
contracting and rotating component: $q=|z|^2$ is its squared contraction
factor and $\chi=z+\bar z=2\sqrt q\cos\theta$ encodes its rotation angle.
The needed specialization of \eqref{eq:radical-cycle-polynomial} is
\[
\begin{aligned}
 T_0&=2,\qquad T_1=\chi,\qquad T_{j+1}=\chi T_j-qT_{j-1},\\
 H_m&=\sum_{j=0}^{m-1}q^j,\\
 D_\dagger&\doteq2q^2-2\alpha_\dagger q\chi+\alpha_\dagger^2(\chi^2-2q),\\
 E_\dagger&\doteq(q^2+\alpha_\dagger^2q)\chi-4\alpha_\dagger q^2,
\end{aligned}
\]
and, for $m=5,6$,
\[
\begin{aligned}
 P_m(\alpha_\dagger,q,\chi)=\frac1{\chi-1-q}\Bigl[&
 (D_\dagger-E_\dagger)H_m+q^mD_\dagger\\
 &{}-q^2T_m+2\alpha_\dagger qT_{m+1}
       -\alpha_\dagger^2T_{m+2}\Bigr].
\end{aligned}
\]
As before, the numerator is divisible by $\chi-1-q$, so these are polynomials.
We eliminate $\chi$ from the two polynomial equations by taking their
\emph{resultant}. This is a polynomial in $q$ alone that vanishes whenever
the equations have a common $\chi$ solution. Thus it gives a necessary
condition on $q$, algebraically projecting out $\chi$. It may also vanish
at values that do not correspond to an admissible trajectory, so we must
identify the relevant factor. Exact elimination gives, up to a nonzero
rational scalar,
\[
 \operatorname{Res}_\chi(P_5,P_6)
  =q^7(q+1)^2(q-\alpha_\dagger^2)^3
       (q^2-q+1)^2(q^2+q+1)^3F(q),
\]
where $P_m$ here denotes $P_m(\alpha_\dagger,q,\chi)$.
None of the factors preceding $F$ vanishes in $I$, so the common solution
from the preceding lemma satisfies $F(q_\dagger)=0$.
Sturm's rule, which counts real roots in a rational interval using exact
polynomial arithmetic, gives one root in $I$ and one in the smaller
$q$ interval isolating the common solution. The smaller interval lies in $I$.
This identifies the specified root of $F$ with $\qopt(s_\dagger)$.

\emph{Reading the modular factorizations.}
A polynomial is \emph{irreducible over $\mathbb Q$} if it cannot be factored
into two positive-degree polynomials with rational coefficients.
Rather than factor $F$ over $\mathbb Q$ directly, we reduce its integer
coefficients modulo a prime $p$ and factor using arithmetic modulo $p$.
Exact computation gives
\[
 \begin{array}{c|l}
 p&\text{degrees of the irreducible factors of }F\bmod p\\ \hline
 13&1,19\\
 107&1,1,2,3,13\\
 223&20
 \end{array}.
\]
The leading coefficients modulo these primes are $9$, $102$, and $124$,
respectively, so the degree does not drop. Each reduction is
\emph{squarefree}, meaning that no irreducible factor is repeated.

The \emph{Galois group} $\mathfrak{G}$ consists of the permutations of the complex roots
that preserve their algebraic relations over $\mathbb Q$.
Dedekind's theorem turns each row of the table into information about such
permutations: $\mathfrak{G}$ contains a permutation whose cycle lengths are precisely
the listed factor degrees~\citep[Theorem~4.28]{milne2022fields}.\footnote{The
cited theorem is stated for monic integer polynomials. With $a\doteq a_{20}$,
apply it to $a^{19}F(u/a)$, which is monic with integer coefficients and has
the same splitting field as $F$. At the three primes, $a$ is invertible,
so the change of variable and the scalar factor preserve factor degrees
and squarefreeness.}
Here cycles refer to permutations of polynomial roots, not to the
iteration-index comparisons used to construct $F$. A cycle of length $k$
sends each of $k$ roots to the next and the last back to the first, fixing
all other roots. The three rows give the following argument.
\begin{enumerate}
\item \emph{Irreducibility and transitivity.}
Modulo $223$, there is just one irreducible factor, of degree $20$.
Because the integer coefficients of $F$ have no common factor greater than
one and its leading coefficient is nonzero modulo $223$, this proves
irreducibility over $\mathbb Q$.
Consequently $\mathfrak{G}$ is \emph{transitive}: any root can be sent to any other
root by some permutation in $\mathfrak{G}$.
\item \emph{Moving pairs of roots.}
The row for $p=13$ supplies a $19$-cycle fixing the remaining root.
Its powers can send any of the other nineteen roots to any other while
keeping that root fixed. Transitivity gives the same property for any
choice of fixed root. Thus $\mathfrak{G}$ is \emph{$2$-transitive}: it can send any
ordered pair of distinct roots to any other such pair.
\item \emph{Swapping two roots.}
The row for $p=107$ supplies disjoint cycles of lengths $1,1,2,3,13$.
Applying that permutation $39=3\cdot13$ times makes the $3$- and $13$-cycles
return to their starting points, while the $2$-cycle still swaps its two
roots. The result is a \emph{transposition}, a permutation swapping just two
roots. By $2$-transitivity, relabelling this swap using elements of $\mathfrak{G}$
gives a swap of any chosen pair. Every permutation is a product of such
swaps, so $\mathfrak{G}=S_{20}$, the full symmetric group on the twenty roots.
\end{enumerate}

\emph{Excluding radicals.}
The needed consequence of the Galois criterion for solvability by radicals
is this: an irreducible polynomial over $\mathbb Q$ whose Galois group is
$S_n$, with $n\geq5$, has no root expressible by
radicals~\citep[Theorem~3.28 and its following discussion; proof in Theorem~5.34]{milne2022fields}.
Irreducibility matters here: a radical expression for one root would give
such expressions for all the other roots as well.
Since $F$ is irreducible and its group is $S_{20}$,
this excludes a radical expression for $q_\dagger$.
Evaluating an everywhere-defined radical formula with rational coefficients
at the rational input $s_\dagger$ would contradict this conclusion, proving
Remark~\ref{rem:no-radical-formula}. This does not preclude specifying the
rate as the unique root of a polynomial in an explicitly given interval,
as we have done here.
\end{proof}

\subsection{Robustness of the radical obstruction}
\label{app:radical-extensions}

Can additional constants help express the single value $q_\dagger$, or the
rate as a function of $s$ near $s_\dagger$? A radical expression permits field
arithmetic and finitely many nested $n$th roots ($n\geq2$), including complex
intermediate values.

\begin{proposition}[Extensions at the certified parameter]
\label{prop:radical-base-extensions}
The factor $q_\dagger$ is not expressible by radicals over $\mathbb Q(\tau)$
for any number $\tau$ transcendental over $\mathbb Q$, nor over any finite
extension $E/\mathbb Q$ with $[E:\mathbb Q]<20$. Adjoining the answer itself
attains the bound: $\mathbb Q(q_\dagger)$ has degree $20$.
\end{proposition}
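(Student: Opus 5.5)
The plan is to reduce every claim to the Galois-theoretic data already established in Proposition~\ref{prop:rate-not-radical}. Let $L\subset\mathbb C$ be the splitting field of $F$ over $\mathbb Q$, so that $\operatorname{Gal}(L/\mathbb Q)\cong S_{20}$. The tool throughout is the Galois criterion over an arbitrary field $K$ of characteristic zero: if an algebraic number $\omega$ lies in a radical extension of $K$, then the Galois closure of that extension over $K$ has solvable group. This closure contains every $K$-conjugate of $\omega$. Hence the splitting field over $K$ of the minimal polynomial of $\omega$ over $K$ has a Galois group that is a quotient of a solvable group, and is therefore solvable. It thus suffices, in each case, to show that this Galois group is not solvable.

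\emph{The last claim.} This is immediate. By Proposition~\ref{prop:rate-not-radical}, $F$ is irreducible of degree $20$, so $[\mathbb Q(q_\dagger):\mathbb Q]=20$. Trivially, $q_\dagger$ is expressible over $\mathbb Q(q_\dagger)$, so the bound $<20$ in the finite-extension statement cannot be improved.

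\emph{Transcendental constants.} Let $K=\mathbb Q(\tau)$ with $\tau$ transcendental. First, I would note that $\mathbb Q$ is algebraically closed in $\mathbb Q(\tau)$: a rational function in $\tau$ that is algebraic over $\mathbb Q$ is constant. Hence $L\cap K=\mathbb Q$. Since $L/\mathbb Q$ is Galois, the theorem on natural irrationalities gives
\[
 \operatorname{Gal}(LK/K)\cong\operatorname{Gal}(L/L\cap K)=\operatorname{Gal}(L/\mathbb Q)\cong S_{20}.
\]
This group is transitive on the roots, so $F$ remains irreducible over $K$ and $LK$ is the splitting field of the minimal polynomial of $q_\dagger$ over $K$. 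Because $S_{20}$ is not solvable, the criterion above excludes radicals over $K$.

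\emph{Finite extensions.} Let $E/\mathbb Q$ be finite with $[E:\mathbb Q]<20$. The same natural-irrationalities isomorphism gives $\operatorname{Gal}(LE/E)\cong H:=\operatorname{Gal}(L/L\cap E)\le S_{20}$. Its index is
\[
 [S_{20}:H]=[L\cap E:\mathbb Q]\le[E:\mathbb Q]<20.
\]
The key group-theoretic input is the classical fact (Jordan; see e.g.\ Dixon--Mortimer, Theorem~5.2B) that for $n\ge5$ the only subgroups of $S_n$ of index less than $n$ are $A_n$ and $S_n$. Hence $H\in\{A_{20},S_{20}\}$. Both groups are transitive, so $F$ is irreducible over $E$ and $LE$ is the splitting field of the minimal polynomial of $q_\dagger$ over $E$. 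Both groups are also nonsolvable, since $A_{20}$ is simple and nonabelian, and the criterion again excludes radicals.

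I expect the main obstacle to be this index bound: it is the only input not already contained in Proposition~\ref{prop:rate-not-radical}. I would cite it rather than reprove it. If a self-contained argument were wanted, one would let $S_{20}$ act on the cosets of $H$, giving a homomorphism to $S_k$ with $k<20$. Its kernel is normal in $S_{20}$ and nontrivial (the order $20!$ exceeds $k!$), so the kernel is $A_{20}$ or $S_{20}$, which forces $H\supseteq A_{20}$.
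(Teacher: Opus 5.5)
Your proof is correct and takes essentially the same route as the paper. Restricting to the splitting field $L$ of $F$ identifies the group over the enlarged base with $\operatorname{Gal}(L/L\cap K)$. For finite $E$, the coset action of $S_{20}$ on a subgroup of index $<20$ forces that subgroup to contain $A_{20}$; the paper phrases this as $A_{20}$ being simple with $20!/2>d!$. The only cosmetic difference is the transcendental case: you use natural irrationalities with $L\cap\mathbb Q(\tau)=\mathbb Q$, while the paper shows that $L$ and $\mathbb Q(\tau)$ are linearly disjoint by comparing coefficients of powers of $\tau$, and these amount to the same thing.
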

\begin{proof}
Let $L$ be the splitting field of $F$, so $\operatorname{Gal}(L/\mathbb Q)=S_{20}$.
A transcendental $\tau$ remains transcendental over $L$. Clearing denominators
and comparing coefficients of powers of $\tau$ shows that a basis of $L$ over $\mathbb Q$
stays linearly independent over $\mathbb Q(\tau)$; likewise for
$\mathbb Q(q_\dagger)$. Thus adjoining $\tau$ preserves the irreducibility
and group $S_{20}$ of $F$, excluding radical expressions for its roots.

For a finite extension $E$, the splitting group over $E$ identifies with
$\mathfrak{G}_E=\operatorname{Gal}(L/(L\cap E))$, a subgroup of $S_{20}$ of index
$d=[L\cap E:\mathbb Q]<20$. Acting on the cosets maps $S_{20}$ to $S_d$.
Since $20!/2>d!$, the simple alternating group $A_{20}$ of even permutations
cannot inject into $S_d$. Its action is therefore trivial, so $\mathfrak{G}_E$
contains $A_{20}$. This makes $\mathfrak{G}_E$ transitive and nonsolvable: $F$ stays
irreducible over $E$, and the radical criterion again applies.
\end{proof}

We now seek a formula near $s_\dagger$, with constants fixed independently
of $s$. These may be \emph{arbitrary complex numbers}, even $q_\dagger$ itself.

Keeping $\alpha$ indeterminate in \eqref{eq:radical-cycle-polynomial} gives
a parameter-dependent version of $F$, the polynomial $H\in\mathbb Z[\alpha,q]$
defined by
\begin{equation}
\label{eq:generic-radical-resultant}
\begin{aligned}
 \operatorname{Res}_\chi(P_5,P_6)
 ={}&-2\alpha^{12}q^7(q+1)^2(q-\alpha^2)^3\,\times\\[-2pt]
 & (q^2-q+1)^2(q^2+q+1)^3H(\alpha,q).
\end{aligned}
\end{equation}
It has degree $20$ in $q$, degree $8$ in $\alpha$, leading $q$ coefficient
$16$, and $H(\alpha_\dagger,q)=16F(q)/a_{20}$.

\begin{proposition}[No local radical formula with arbitrary fixed constants]
\label{prop:geometric-radical-obstruction}
There is an open interval $J\subset(1,\infty)$ containing $s_\dagger$ on
which $\qopt(s)$ is a real-analytic root of $H(1-1/s,q)$.
The splitting-field Galois group of $H$ over $\mathbb C(\alpha)$ is $S_{20}$.
Consequently no formula built from rational functions of $s$ with arbitrary
fixed complex coefficients and finitely many radicals agrees with $\qopt(s)$
on any nonempty open subinterval of $J$.
\end{proposition}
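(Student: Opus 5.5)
The proof splits into three parts: a local analytic-root statement, a generic Galois computation, and a specialization argument that rules out radical formulas.

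\emph{Local analytic root.} At $s=s_\dagger$ the rate $q_\dagger$ is a simple root of $H(\alpha_\dagger,q)$. Simplicity holds because $H(\alpha_\dagger,\cdot)=16F/a_{20}$, $F$ is irreducible over $\mathbb Q$, and hence $F$ is squarefree. The analytic implicit function theorem then gives a real-analytic branch $q(s)$ with $H(1-1/s,q(s))=0$ near $s_\dagger$.

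To identify this branch with $\qopt(s)$, I would repeat the certificate of Lemma~\ref{lem:mixed-radical-rate} with parameters. The system $P_5=P_6=0$ has a transversal solution $(q_\dagger,\chi_\dagger)$; its Jacobian is nonsingular by the interval Newton certificate. The implicit function theorem therefore gives an analytic family $(q(s),\chi(s))$. Each strict inequality in that lemma is open in the parameters:
\begin{itemize}
\item the remaining lag inequalities $m\neq1,5,6$, including the geometric tail bounds for large $m$;
\item $\partial_{\chi\chi}\Phi<0$ near the contact;
\item $\partial_q\Phi<0$;
\item the strict negativity of $\Phi$ away from $\chi_\dagger$;
\item $\lambda\in(0,1)$, where $\lambda(s)$ is defined by the same derivative ratio.
\end{itemize}
All of these survive small perturbations of $s$ by continuity on compact sets. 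The tail bounds hold with uniform constants for $s$ near $s_\dagger$. So the same lower-bound and upper-bound arguments give $\qopt(s)=q(s)$ on an interval $J$. This uniformity bookkeeping is routine but must be stated.

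\emph{Generic Galois group.} Next I show $\operatorname{Gal}(H/\mathbb C(\alpha))=S_{20}$. Write $G_{\mathbb Q(\alpha)}$ for the group over $\mathbb Q(\alpha)$ and $G_{\mathbb C(\alpha)}$ for the geometric group over $\mathbb C(\alpha)$.
\begin{itemize}
\item By Hilbert-type specialization (the specialization inequality), the group at $\alpha=\alpha_\dagger$, which is $S_{20}$ by Proposition~\ref{prop:rate-not-radical}, embeds in $G_{\mathbb Q(\alpha)}$. Since $H(\alpha_\dagger,\cdot)$ is squarefree of full degree $20$, this forces $G_{\mathbb Q(\alpha)}=S_{20}$.
\item $G_{\mathbb C(\alpha)}$ is normal in $G_{\mathbb Q(\alpha)}$, with quotient the Galois group of the constant field extension $\bar{\mathbb Q}\cap L$. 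So $G_{\mathbb C(\alpha)}$ is one of $1$, $A_{20}$, $S_{20}$.
\end{itemize}

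\textbf{Main obstacle.} Excluding $G_{\mathbb C(\alpha)}=A_{20}$ and $G_{\mathbb C(\alpha)}=1$ is the step I expect to be hardest. I would try two routes, in this order.
\begin{itemize}
\item \emph{Ruling out $1$.} Find a branch point. It suffices to show the discriminant $\operatorname{disc}_q H(\alpha,q)\in\mathbb Q[\alpha]$ has a simple root $\alpha_0$ at which $H(\alpha_0,\cdot)$ has exactly one double root. Local monodromy there is a transposition, so $G_{\mathbb C(\alpha)}\neq1$.
\item \emph{Ruling out $A_{20}$.} The same transposition already lies outside $A_{20}$, which gives $G_{\mathbb C(\alpha)}=S_{20}$ directly.
\item \emph{Fallback.} Show the discriminant is not a square in $\mathbb C[\alpha]$, i.e.\ it has a root of odd multiplicity. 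This excludes $A_{20}$ and $1$ simultaneously.
\end{itemize}
Either route is an exact finite computation on a degree-$20$ discriminant, which I would perform and certify.

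\emph{No local radical formula.} Suppose a radical formula $R(s)$ with arbitrary fixed complex constants agrees with $\qopt$ on some open $J'\subset J$. Put $s=1/(1-\alpha)$. Then $R$ defines an element of a radical (solvable) extension of $\mathbb C(\alpha)$, realized by analytic continuation of germs along the interval. Both $R$ and the branch $q(s)$ are analytic germs on $J'$ that agree there, so $R$ equals a root of $H$ in the field of germs.

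Since $H$ is irreducible over $\mathbb C(\alpha)$ (its group $S_{20}$ is transitive), expressibility of one root by radicals forces the whole splitting field into a solvable extension. That makes $S_{20}$ solvable, a contradiction.

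One technicality remains: germs of multivalued radicals must be embedded in a common differential field, such as the field of meromorphic germs at an interior point of $J'$. There radical adjunction corresponds to Kummer/cyclic extensions of $\mathbb C(\alpha)$, so the standard Galois criterion applies.
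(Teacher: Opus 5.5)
Your proposal is correct and follows essentially the paper's route. You continue the contact via the implicit function theorem for $(P_5,P_6)$ and use openness of all strict margins. You obtain $S_{20}$ over $\mathbb Q(\alpha)$ by specialization at $\alpha_\dagger$, which the paper phrases as lifting the twenty distinct roots to power series in $\alpha-\alpha_\dagger$. Finally, you get a transposition from a simple zero of $\operatorname{Disc}_qH$ and combine it with normality of the geometric group in $S_{20}$.

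The two items you leave implicit are supplied in the paper. First, the exact certificate $\mathcal D(8)\equiv0$, $\mathcal D'(8)\equiv25\pmod{29}$ forces a simple complex zero of the discriminant. Second, shrinking $J$ so that the continued $q$ avoids the other factors of $\operatorname{Res}_\chi(P_5,P_6)$ identifies your $(P_5,P_6)$ branch with the simple-root branch of $H$.
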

\begin{proof}
\emph{Persistence of the optimal branch.}
At the isolated common zero, outward-rounded evaluation gives
\[
 4.5\cdot10^{-8}
 <\det\frac{\partial(P_5,P_6)}{\partial(q,\chi)}
       (\alpha_\dagger,q_\dagger,\chi_\dagger)
 <4.6\cdot10^{-8}.
\]
The analytic implicit-function theorem continues $(q,\chi)$ locally in $s$,
preserving $q>\alpha_s^2$ and $\chi^2<4q$. The three active comparisons
remain equalities. All other finite inequalities in
Lemma~\ref{lem:mixed-radical-rate} have strict margins; its geometric tail
bounds remain uniform on a small compact neighborhood with $q<1$.
Thus interpolation feasibility persists. The dual weight remains in $(0,1)$;
strict concavity near contact, the negative test-$q$ derivative, and
uniform negativity away from contact also persist.
For each continued pair, hold its dual weight fixed as the test $q$ varies.
The same lower and strict-separator upper arguments identify the continued
factor with $\qopt(s)$. Shrink $J$ to exclude the other resultant factors.

\emph{Galois group over $\mathbb Q(\alpha)$.}
At $\alpha_\dagger$, the roots are distinct and their group is $S_{20}$.
Each root lifts uniquely to a formal power-series root in
$\alpha-\alpha_\dagger$, with coefficients in the splitting field $L$ of $F$.
Acting coefficientwise, $\operatorname{Gal}(L/\mathbb Q)$ permutes these
lifts faithfully because their constant terms are distinct. Thus the group
over $\mathbb Q(\alpha)$ contains, and hence equals, $S_{20}$.

\emph{Allowing arbitrary complex constants.}
The discriminant
$\mathcal{D}(\alpha)\doteq\operatorname{Disc}_qH(\alpha,q)\in\mathbb Z[\alpha]$
detects collisions of roots in $q$. Exact computation gives
\begin{equation}
\label{eq:generic-radical-branch-certificate}
 \mathcal{D}(8)\equiv0,\qquad \mathcal{D}'(8)\equiv25\pmod{29}.
\end{equation}
These congruences imply that $\mathcal{D}$ has a simple complex zero: if every
characteristic-zero irreducible factor were repeated, reduction modulo $29$
could not produce a simple root. Since the leading $q$ coefficient is the
nonzero constant $16$, a simple discriminant zero is a simple collision of
two roots; continuation around it exchanges those roots and fixes the others.
This transposition lies in the subgroup fixing the algebraic constant field,
called the geometric group. It is normal in $S_{20}$, so contains all
conjugate transpositions and equals $S_{20}$.
Extending the algebraically closed constants from
$\overline{\mathbb Q}$ to $\mathbb C$ preserves this group.

Hence $H$ is irreducible over $\mathbb C(\alpha)$ with nonsolvable group.
A radical expression agreeing with a root on an open interval would give a
radical algebraic branch over that field, contradicting the radical
criterion. The rational change $\alpha=1-1/s$ proves the claim.
\end{proof}

\begin{remark}[Adjoining the contact coordinate]
The contact coordinate and the rate generate the same degree-$20$ field:
$\mathbb Q(\chi_\dagger)=\mathbb Q(q_\dagger)$.
Exact elimination of the specialized $P_5,P_6$ gives rational-coefficient
relations $U(q)\chi+V(q)=0$ and $\widetilde U(\chi)q+\widetilde V(\chi)=0$.
Exact gcd checks exclude zeros of both leading coefficients at the isolated
pair, so each coordinate is recovered rationally from the other.
These obstructions do not exclude isolated-root descriptions or other
nonradical notions of closed form.
\end{remark}

\section{Rate values and their numerical evaluation}
\label{app:rate-table}

While expressing the optimal contraction factor as a formula with radicals might not be possible in general as we have seen in Section~\ref{app:rate-radicals}, we can still use our characterization to compute contraction values for varying values of $\kappa$ in Table~\ref{tab:nag-rate-values}. To this end, we evaluate
$\qopt(\sqrt\kappa)$ for selected condition numbers and compare it with
Gannot's upper bound denoted by $q_{\rm G}(\sqrt\kappa)$. The values are obtained by optimizing over mode weights and angles in the equivalent angular formulation. As can be seen, Gannot's upper bound is quite tight with largest gaps for moderate $\kappa$. 

\begin{table}[h]
\centering
\small
\begin{tabular}{@{}r r r r l@{}}
\toprule
$\kappa$ & $q_\star$ & $q_{\rm G}$ & $q_{\rm G}-q_\star$ & status \\
\midrule
1 & $0$ & -- & -- & exact \\
1.001 & $4.99001617487858\times 10^{-7}$ & $4.99001622504\times 10^{-7}$ & $5.01595\times 10^{-15}$ & numerical \\
1.01 & $4.90160022279601\times 10^{-5}$ & $4.90160037565\times 10^{-5}$ & $1.52852\times 10^{-12}$ & numerical \\
1.1 & $0.00414080335569045$ & $0.00414080555192$ & $2.19623\times 10^{-9}$ & numerical \\
2 & $0.133970856851373$ & $0.133998720281$ & $2.78634\times 10^{-5}$ & numerical \\
5 & $0.400092644577688$ & $0.40047760383$ & $3.84959\times 10^{-4}$ & numerical \\
10 & $0.564423277972893$ & $0.565235163135$ & $8.11885\times 10^{-4}$ & exact CA \\
20 & $0.688283179210141$ & $0.689465036059$ & $0.00118186$ & numerical \\
50 & $0.801693369348438$ & $0.803120025898$ & $0.00142666$ & numerical \\
100 & $0.859899778052548$ & $0.861059858423$ & $0.00116008$ & exact CA \\
200 & $0.901252726206213$ & $0.902086880416$ & $8.34154\times 10^{-4}$ & numerical \\
500 & $0.937861932828071$ & $0.938397069587$ & $5.35137\times 10^{-4}$ & numerical \\
1000 & $0.95626894031021$ & $0.956612260497$ & $3.4332\times 10^{-4}$ & numerical \\
\bottomrule
\end{tabular}
 \ \caption{Selected squared-distance optimal factor $\qopt$ and the Gannot upper bound
$q_{\rm G}$. For $\kappa=1$ the values are analytically exact (and trivial).
\emph{Exact CA} (exact computer-assisted) refers to separately established
algebraic values with matching lower and upper rate proofs via Appendices~\ref{app:kappa-ten} and~\ref{app:kappa-hundred}; the values in the table are rounded. Other rate entries and all gaps are numerical approximations via the procedure described in the section.}
\label{tab:nag-rate-values}
\end{table}

\paragraph{Obtaining numerical rates.}
Basically the numerical rates are obtained by means of bisection. More precisely, for a given $\kappa>1$, we range over test squared factors $q$. For each such $q$ we want to assess numerically whether it is above or below $\qopt(\sqrt\kappa)$ and to this end we choose a finite grid of
angles in $[0,\pi]$. The unknowns are nonnegative mode weights summing to one
and a nonnegative common potential value. We impose the two interpolation
inequalities \eqref{eq:star-feasible} for $t=1,\ldots,M$, using the equivalent
transformed-gradient normalization of Appendix~\ref{app:uniform-upper-bound}. With $q$ and the angles fixed, these
constraints are linear in the unknowns. The linear program maximizes a
common slack in the inequalities: a nonnegative optimal slack indicates
feasibility of this finite system, subject to numerical accuracy. If we
considered all angles and all interpolation inequalities and solved the
system exactly, then, within our search range, infeasibility would imply
$q > \qopt(\sqrt\kappa)$. We are, however, dealing with three approximations:
floating-point arithmetic, a finite angular grid, and a finite subset of the
interpolation inequalities. We assess numerical accuracy through angular
refinement and checks at additional iteration indices, nonetheless we stress that the obtained rates are only approximations. 

The bisection in $q$ starts with the initial interval
$[(1-1/\sqrt\kappa)^2, q_{\rm G}]$, whose endpoints are the quadratic factor
and Gannot's upper bound $q_{\rm G}$ defined below. If the computed optimal
slack is negative, we take $q$ as the new upper endpoint of the numerical
search; otherwise, we take it as the new lower endpoint. These updates do
not constitute certified bounds on $\qopt(\sqrt\kappa)$. We first solve on a grid
covering the angular interval, with additional resolution near zero. We then
add grid points around the angles receiving positive weight and repeat the
bisection on this finer grid. This refines the locations of the modes further.
For each condition number, the index limit for the LP is fixed at
\[
 M=\max\{80,\lceil8\sqrt\kappa\rceil\}.
\]
After the angular refinement, we check the recovered weights and potential
against the two interpolation inequalities for $t=1,\ldots,K$, where
\[
 K=\max\{2000,\lceil40\sqrt\kappa\rceil\}.
\]
Thus the returned measure is tested against a large number of constraints that were not
used to construct it. We stress that these are only finite, floating-point checks, i.e., an approximate test which does not prove feasibility for every integer $t\geq1$. The exact characterization of Theorem~\ref{thm:main} would require using all iteration indices.

\paragraph{Exact values.}
While the aforementioned numerical computation is only approximate, we can also do exact calculations for specific values. To illustrate this, we did this for $\kappa=10$ and $\kappa=100$ in Appendices~\ref{app:kappa-ten}
and~\ref{app:kappa-hundred} and report these values in Table~\ref{tab:nag-rate-values} instead of the numerical approximations via the procedure described above. These algebraic solutions and matching lower and upper rate bounds are obtained from algebraic computations, exact identities and interval arithmetic. The infinitely many interpolation inequalities are handled by finite checks together with analytic bounds for the remaining iteration indices. At $\kappa=1$, the rate is zero because the first gradient step reaches the minimizer.

\paragraph{Computing the comparison bound.}
For $\kappa>1$, Gannot's upper bound is the smaller positive root $r_-$ of
\begin{align*}
 h_\kappa(r)={}&8(1-\sqrt\kappa)^4
 -(1-\sqrt\kappa)^2(15\kappa-10\sqrt\kappa-1)r\\
 &+2\kappa(3\kappa-4\sqrt\kappa+1)r^2+\kappa^2r^3,
\end{align*}
as stated in \citet[Proposition~2.13]{gannot2022frequency}.
That proposition bounds the norm factor by $\sqrt{r_-}$, so the corresponding
squared-distance bound is $q_{\rm G}=r_-$. We evaluate this cubic for the table and also use it to provide the upper endpoint of the initial LP search interval.
The dash at $\kappa=1$ in Table~\ref{tab:nag-rate-values} records the proposition's assumption $\kappa>1$; the plotted comparison curve in Figure~\ref{fig:nag-rate-curve} uses the continuous limit zero.

\end{document}